\newcommand{\mc}[1]{\mathcal{#1}}
\renewcommand{\phi}{\varphi}
\renewcommand{\theta}{\vartheta}
\renewcommand{\rho}{\varrho}
\newcommand{\lra}{\longrightarrow}
\renewcommand{\bigr}[1]{{\big(#1\big)}}
\renewcommand{\Bigr}[1]{{\Big(#1\Big)}}
\newcommand{\ou}[3][]{\overset{{#1}}{\underset{{#2}}{{#3}}}}
\newcommand{\com}{\mathbb{C}}
\newcommand{\rea}{\mathbb{R}}
\newcommand{\integer}{\mathbb{Z}}
\newcommand{\integergz}{\mathbb{Z}_{>0}}
\newcommand{\Mod}{\mathrm{Mod}}
\newcommand{\mrm}[1]{\mathrm{#1}}
\newcommand{\mfr}[1]{\mathfrak{#1}}
\newcommand{\Op}{\mathrm{Op}}
\newcommand{\opxsac}{\mathrm{Op}^c(\xsa)}
\newcommand{\cov}[1]{\mathrm{Cov}_{sa}(#1)}
\newcommand{\xsa}{{X_{sa}}}
\newcommand{\Ho}[3][]{\mathcal{H}\mathrm{om}_{#1}(#2,#3)}
\newcommand{\mcHom}[3][]{\mathcal{H}\mathrm{om}_{#1}(#2,#3)}
\newcommand{\ho}[3][]{\mathrm{Hom}_{#1}(#2,#3)}
\newcommand{\RH}[3][]{\mathit{R}\mathcal{H}\mathit{om}_{#1}(#2,#3)}
\newcommand{\Rh}[3][]{\mathit{RHom}_{#1}(#2,#3)}
\newcommand{\M}{\mathcal{M}}
\newcommand{\ot}{\mathcal{O}^t}
\newcommand{\otxsa}{\mathcal{O}^t_\xsa}
\newcommand{\dbt}{\mathcal{D}b^t}
\newcommand{\Db}{\mathcal{D}b}
\newcommand{\dbxr}{\mathcal{D}b_{X_\rea}}
\newcommand{\dbtxsa}{\mathcal{D}b^t_{\xsa}}
\newcommand{\D}{\mathcal{D}}
\newcommand{\DX}{{\D_X}}
\newcommand{\dbdx}[1][]{D^b_{#1}(\DX)}
\newcommand{\bdc}[2][]{D^b_{#1}(#2)}
\renewcommand{\mod}{\mathrm{Mod}}
\renewcommand{\M}{\mathcal{M}}
\newcommand{\N}{\mathcal{N}}
\newcommand{\Dfinv}[1]{\mathrm D{#1}^{-1}}
\newcommand{\dfinv}[1]{{#1}_{\mathrm D}^{-1}}
\newcommand{\Dotimes}{\overset D{\otimes}}
\newcommand{\dbd}[2][]{D^b_{#1}(\D_{#2})}
\newcommand{\omt}[1]{\Omega^t_{#1}}
\newcommand{\drt}[2][]{D\!R^t_{{\D_{#1}}}{#2}}
\newcommand{\ch}{\mathrm{char}}
\newcommand{\dmod}[2][]{\mathrm{Mod}_{#1}(\mathcal D_{#2})}
\newcommand{\solt}{\mathscr{S}ol^t}
\newtheorem{thm}{Theorem}[subsection]
\newtheorem{df}[thm]{Def\mbox{}inition}
\newtheorem{prop}[thm]{Proposition}
\newtheorem{lem}[thm]{Lemma}
\newtheorem{conj}[thm]{Conjecture}
\numberwithin{equation}{section}
\newcommand{\ben}{\begin{enumerate}}
\newcommand{\een}{\end{enumerate}}
\author{Giovanni Morando}
\title{\textbf{Preconstructibility of tempered solutions of holonomic $\D$-modules}}
\date{July 2010}
\long\def\symbolfootnote[#1]#2{\begingroup%
\def\thefootnote{\fnsymbol{footnote}}\footnote[#1]{#2}\endgroup}
\begin{document}

\maketitle

\thispagestyle{empty}

\begin{abstract}
In this paper we prove the preconstructibility of the complex of
tempered holomorphic solutions of holonomic $\D$-modules on complex
analytic manifolds. This implies the finiteness of such complex on any
relatively compact open subanalytic subset of a complex analytic
manifold. Such a result is an essential step for proving a conjecture
of M. Kashiwara and P. Schapira (\cite{ks_micro_indsheaves}) on the
constructibility of such complex.
\end{abstract}


\symbolfootnote[0]{\phantom{a}\hspace{-7mm}\textit{2010 MSC.} Primary 32C38; Secondary 32B20 32S40 14Fxx.}

\symbolfootnote[0]{\phantom{a}\hspace{-7mm}\textit{Keywords and phrases:} $\D$-modules, irregular singularities, tempered holomorphic functions, subanalytic.}

\vspace{-13mm}

\tableofcontents

\phantom{a}

\phantom{a}

\section*{Introduction}\markboth{Introduction}{Introduction}
\addcontentsline{toc}{section}{\textbf{Introduction}}
The Riemann--Hilbert correspondence is one of the more powerful
results in $\D$-module theory. It has important applications in many
fields of mathematics such as representation theory or microlocal
analysis. Such a correspondence gives an equivalence between the
category of {\emph{regular} holonomic $\D$-modules on an analytic
  manifold $X$ and the category of perverse sheaves on $X$. The former
  is of analytic nature, as it is a categorical description of certain
  linear partial differential systems. The latter is of topological
  nature as it is defined using subanalytic stratifications of $X$ and
  some combinatorial properties. Such an equivalence is realized
  through the complex of holomorphic solutions of regular holonomic
  $\D$-modules. In the proof of the Riemann--Hilbert correspondence
  given by M. Kashiwara
  (\cite{kashiwara_79,kashiwara_riemann-hilbert}) and, more
  generically, in the study of the complex of holomorphic solutions of
  a holonomic $\D$-module, the property of constructibility of such
  complex is fundamental (\cite{kashiwara_regular1}). Let us simply
  recall that a bounded complex of sheaves is constructible if its
  cohomology groups are locally constant sheaves on the strata of an
  analytic stratification and if their stalks have finite dimension.

  The study of \emph{irregular} holonomic $\D$-modules is much more
  complicated. In dimension $1$, a local irregular Riemann--Hilbert
  correspondence was proved through the works of many mathematicians
  as P.  Deligne, M. Hu\-ku\-ha\-ra, A. Levelt, B. Malgrange,
  J.-P. Ramis, Y.  Sibuya, H. Turrittin (see \cite{dmr} and
  \cite{malgrange_birkhauser}). Roughly speaking, such a
  correspondence is obtained in two steps. First one studies the
  formal structure of flat meromorphic connections (Levelt--Turrittin
  Theorem) and then one analyses the gluing of the sectorial
  asymptotic lifts of the formal structures. The higher dimensional
  case is still open. C. Sabbah conjectured the analogue of the
  Levelt--Turrittin Theorem and he proved it in some particular cases
  (\cite{sabbah_ast}). Furthermore, he deeply studied the asymptotic
  lifting property (\cite{sabbah_aif}). In \cite{mochizuki1} and
  \cite{mochizuki2}, T. Mochizuki proved Sabbah's conjecture in the
  algebraic cases. Recently (\cite{kedlaya1}, \cite{kedlaya2}),
  K. Kedlaya proved Sabbah's conjecture in the analytic case. In
  dimension $1$, the formal decomposition is given by the
  Levelt--Turrittin Theorem. In higher dimension it is not
  straightforward and one needs a finer study of the formal invariants
  of a flat meromorphic connection. This leads to introduce the notion
  of \emph{good} model which plays a central role in the cited
  works. These new results should allow a formulation of the irregular
  Riemann--Hilbert correspondence in higher dimension. Even if the
  local $1$-dimensional case is nowadays classical, it has some points
  which do not allow the passage to a global description of holonomic
  $\D$-modules or to a natural generalization to the higher
  dimensional case. In particular, the notion of the formal invariants
  of a flat meromorphic connection in dimension $1$ has a purely
  analytic nature and the higher dimensional analogues of such
  invariants need to satisfy a \emph{goodness} property.
 
  Recently (\cite{ks_micro_indsheaves}), M. Kashiwara and P. Schapira
  introduced the subanalytic site relative to $X$, denoted $\xsa$, and
  the complex of sheaves on it of tempered holomorphic functions,
  denoted $\ot_\xsa$. The study of solutions of holonomic $\D$-modules
  on complex curves with values in $\ot_\xsa$ allowed to describe
  faithfully and in a topological way the formal invariants of flat
  meromorphic connections
  (\cite{morando_tempered_solutions_formal_invariants}). Furthermore,
  Kashiwara--Schapira defined a notion of $\rea$-constructibility
  for sheaves on $\xsa$ and they conjectured the
  $\rea$-constructibility of tempered solutions of holonomic
  $\D$-modules. In \cite{morando_existence_theorem}, we proved the
  conjecture for holonomic $\D$-modules on complex curves. In the
  present article we prove the $\rea$-preconstructibility of the
  complex of tempered solutions of holonomic $\D$-modules on analytic
  manifolds. This implies the finiteness of such complex on any
  relatively compact open subanalytic subset of a complex analytic
  manifold.

  The article is organized as follows. In the first section we review
  classical results on sheaves on subanalytic sites, $\D$-modules,
  tempered solutions and elementary asymptotic decompositions of flat
  meromorphic connections. In this paper, the notion of good model and
  good decomposition is not needed. Hence, we will simply recall the
  elementary asymptotic decomposition of flat meromorphic connections
  without any goodness property. In the second section we state and
  prove our main result of $\rea$-preconstructibility of the tempered
  De Rham complex of holonomic $\D$-modules. The proof is divided in
  two parts. In the first part we prove our result for modules
  supported on a closed analytic subset $Z$ of $X$ by using induction
  on the dimension of $Z$ and reducing to the $1$-dimensional case. In
  the second part, we consider flat meromorphic connections. We start
  by proving the $\rea$-preconstructibility of the tempered De Rham
  complex of an elementary model using the results in dimension $1$
  and some properties on the pull-back of the tempered De Rham
  complex. Then, we prove the case of a generic flat meromorphic
  connections by using the results of Kedlaya, Mochizuki and Sabbah.

  Let us conclude the introduction recalling that many spaces of
  functions with growth conditions have been used in the study of
  irregular $\D$-modules. For example, in \cite{malgrange_birkhauser},
  the sheaves of holomorphic functions with asymptotic expansion or
  moderate and Gevrey growth at the origin have been studied. In
  higher dimension, sheaves of functions with moderate growth or with
  asymptotic expansion on a divisor $Z$ of $X$ have been used in
  \cite{sabbah_aif}, \cite{sabbah_ast}, \cite{hien_periods} and
  \cite{hien_periods_manifolds}. Such sheaves are defined on the real
  blow-up of $X$ along $Z$ and they do not allow to treat globally
  holonomic $\D_X$-modules. The $\rea$-preconstructibility of the
  tempered De Rham complex of an elementary model, obtained in the
  present article (see Lemma \ref{lem:finiteness_good_models}), is
  related to the results, obtained by M. Hien in \cite{hien_periods}
  and \cite{hien_periods_manifolds}, on the vanishing of the moderate
  De Rham complex of a good model. Our approach is quite different to
  Hien's one. Indeed, Hien proved his results of finiteness and
  vanishing of the moderate De Rham complex of good models by
  estimating the growth of some integrals appearing in the solutions
  on multisectors (see \cite{sabbah_aif} for the asymptotic expansion
  case).  In our case the situation is more complicated. Indeed, with
  Hien's approach we should have proved growth estimates of solutions
  on arbitrary subanalytic open sets. We used this method in dimension
  $1$ (\cite{morando_existence_theorem}) but in higher dimension the
  geometry of a subanalytic set can be much more complicated. Hence we
  adopted a different approach to prove the
  $\rea$-preconstructibility of the tempered De Rham complex of an
  elementary model. We used our results in dimension $1$, combining it
  with some formulas on the pull-back of the tempered De Rham complex
  proved in \cite{ks_indsheaves}.
  Remark that, adapting the techniques of
  \cite{honda_prelli_multi-specialization}, one can obtain holomorphic
  functions with moderate growth from tempered holomorphic functions
  in a functorial way.


  {\em Acknowledgments}: I wish to express my gratitude to P. Schapira
  for his constant encouragement in the preparation of this paper. I
  am deeply indebted with C. Sabbah for many essential discussions, I
  wish to warmly thank him here.

\section{Notations and review}  \label{recall}
\subsection{Subanalytic sites}

Let $X$ be a real analytic manifold countable at inf\mbox{}inity.

\begin{df}{\label{def:semi-sub-analytic}}
\begin{enumerate}
\item A set $Z\subset X$ is said \emph{semi-analytic at} $x\in X$ if
  the following condition is satisf\mbox{}ied. There exists an open
  neighborhood $W$ of $x$ such that $Z\cap W=\cup_{i\in I}\cap_{j\in
    J}Z_{ij}$ where $I$ and $J$ are f\mbox{}inite sets and either
  $Z_{ij}=\{y\in X;\ f_{ij}(y)>0\}$ or $Z_{ij}=\{y\in X;\
  f_{ij}(y)=0\}$ for some real-valued real analytic functions $f_{ij}$
  on $W$. Furthermore, $Z$ is said \emph{semi-analytic} if $Z$ is
  semi-analytic at any $x\in X$.
\item A set $Z\subset X$ is said \emph{subanalytic} if the following
  condition is satisf\mbox{}ied. For any $x\in X$, there exist an open
  neighborhood $W$ of $x$, a real analytic manifold $Y$ and a
  relatively compact semi-analytic set $A\subset X\times Y$ such that
  $\pi(A)=Z\cap W$, where $\pi: X\times Y\to X$ is the projection.
\end{enumerate}
\end{df}

Given $Z\subset X$, denote by $\mathring Z$ (resp. $\overline Z$,
$\partial Z$) the interior (resp. the closure, the boundary) of $Z$.

\begin{prop}[See \cite{bm_ihes}]
  Let $Z$ and $V$ be subanalytic subset of $X$. Then $Z\cup V$, $Z\cap
  V$, $\overline Z$, $\mathring Z$ and $Z\setminus V$ are
  subanalytic. Moreover the connected components of $Z$ are
  subanalytic, the family of connected components of $Z$ is locally
  f\mbox{}inite and $Z$ is locally connected at any point in $Z$.
\end{prop}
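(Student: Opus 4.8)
The plan is to reduce everything to the \emph{complement theorem} of Gabrielov — the complement of a subanalytic set is subanalytic — together with the basic structure theory of subanalytic sets (existence of locally finite subanalytic stratifications and the local conic structure), and to treat the remaining assertions as routine bookkeeping. Stability under finite unions is immediate from Definition \ref{def:semi-sub-analytic}(ii): after replacing the two local neighborhoods by a common smaller one $W$ and the two auxiliary manifolds by their product $Y$, we may write $\pi_Y(A)=Z\cap W$ and $\pi_Y(B)=V\cap W$ with $A,B\subset X\times Y$ relatively compact semi-analytic, and then $\pi_Y(A\cup B)=(Z\cup V)\cap W$ with $A\cup B$ again relatively compact semi-analytic. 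For intersections one uses a fibre-product trick: starting from $\pi_Y(A)=Z\cap W$ and $\pi_{Y'}(B)=V\cap W$, set
\[
C:=\{(x,y,y')\in X\times Y\times Y';\ (x,y)\in A,\ (x,y')\in B\}.
\]
Then $C$ is semi-analytic, $\overline C\subset\overline A\times\overline{\pi_{Y'}(B)}$ is compact so $C$ is relatively compact, and $\pi_X(C)=(Z\cap V)\cap W$; hence $Z\cap V$ is subanalytic. The set difference $Z\setminus V=Z\cap(X\setminus V)$ then reduces to the subanalyticity of $X\setminus V$, which is exactly the complement theorem — the one genuinely deep input, proved via resolution of singularities / rectilinearization (see \cite{bm_ihes}).

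For closure and interior I would argue locally near a point $x$: write $Z\cap W=\pi(A)$ with $\overline A$ compact, note that $\overline A$ is again semi-analytic (closures of semi-analytic sets are semi-analytic, by a direct Łojasiewicz-type estimate) and that $\pi|_{\overline A}$ is proper, so $\pi(\overline A)=\overline{\pi(A)}$ is the proper image of a relatively compact semi-analytic set, hence subanalytic; shrinking $W$ this identifies $\overline Z$ locally as a subanalytic set. Then $\mathring Z=X\setminus\overline{X\setminus Z}$ is subanalytic by combining the previous step with the complement theorem, and $\partial Z=\overline Z\setminus\mathring Z$ is subanalytic as a difference of subanalytic sets.

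For the statements about connected components I would invoke the structure theory: $Z$ admits a locally finite stratification $Z=\bigsqcup_\alpha Z_\alpha$ into connected subanalytic submanifolds of $X$ (itself resting on the same circle of ideas). Each connected component of $Z$ is a union of strata, and since any point of $X$ has a neighborhood $U$ meeting only finitely many $Z_\alpha$, every component $C$ satisfies $C\cap U=\bigcup_{Z_\alpha\subset C}(Z_\alpha\cap U)$, a finite union of subanalytic sets, hence $C$ is subanalytic; the same local finiteness forces only finitely many components to meet $U$, giving local finiteness of the family of components. Local connectedness of $Z$ at a point $x\in Z$ follows from the local conic structure theorem: for small $\epsilon$ the set $Z\cap B(x,\epsilon)$ is homeomorphic to the cone over $Z\cap S(x,\epsilon)$, which exhibits a neighborhood basis of $x$ in $Z$ consisting of connected relatively open sets.

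I expect the main obstacle to be precisely the complement theorem: once $X\setminus V$ is known to be subanalytic and a locally finite subanalytic stratification is available, all the remaining assertions are elementary. The fibre-product argument for intersections and the properness argument for closures are the only other places requiring a little care, and both are standard.
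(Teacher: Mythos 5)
The paper gives no proof of this proposition; it simply cites Bierstone--Milman \cite{bm_ihes}, so there is nothing internal to compare you against. Your outline does reproduce, faithfully, the standard chain of arguments found in that reference: finite unions and intersections from the definition (the fibre-product device for intersections is exactly right), complements via Gabrielov's theorem, closures via properness of the projection restricted to a compact semi-analytic set, interiors by combining the two, and the statements about components and local connectedness via locally finite subanalytic stratifications and the local conic structure theorem. You also correctly flag which inputs are genuinely deep (the complement theorem, and implicitly Łojasiewicz's theorem that the closure of a semi-analytic set is semi-analytic, which you perhaps undersell by calling it ``a direct Łojasiewicz-type estimate'') versus which are bookkeeping.

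Two small imprecisions worth tightening. For unions, $A\subset X\times Y$ and $B\subset X\times Y'$ do not live in a common ambient space, so ``$A\cup B$'' needs a preliminary embedding, e.g.\ replace $A$ by $A\times\{y_0'\}$ and $B$ by its image under $(x,y')\mapsto(x,y_0,y')$ inside $X\times Y\times Y'$ (or use the disjoint union $Y\sqcup Y'$). For closure, $\pi(\overline A)=\overline{\pi(A)}=\overline{Z\cap W}$, which can strictly contain $\overline Z\cap W$ outside $W$; one should shrink to a smaller semi-analytic $W'\Subset W$ and use $\overline{Z\cap W}\cap W'=\overline Z\cap W'$, realized as the projection of $\overline A\cap(W'\times Y)$. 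Neither point is a gap in the mathematics, only in the exposition.
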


\begin{df}
  A family $\{A_\alpha\}_{\alpha\in\Lambda}$ of subanalytic subsets of
  $X$ is said a \emph{stratif\mbox{}ication of $X$} if
  $\{A_\alpha\}_{\alpha\in\Lambda}$ is locally f\mbox{}inite,
  $X=\ou{\alpha\in\Lambda}{\bigsqcup}A_\alpha$ and each $A_\alpha$ is
  a locally closed subanalytic manifold.
\end{df}

For the theory of sheaves on topological spaces we refer to
\cite{ks_som}. For the theory of sheaves on the subanalytic site that
we are going to recall now, we refer to \cite{ks_indsheaves}, see also
\cite{prelli_subanalytic_sheaves}.

We denote by $\Op(X)$ the family of open subsets of $X$. For $k$ a
commutative ring we denote by $k_X$ the constant sheaf. For a sheaf in
rings $\mc R_X$, we denote by $\Mod(\mc R_X)$ the category of sheaves
of $\mc R_X$-modules on $X$ and by $\bdc{\mc R_X}$ the bounded derived
category of $\Mod(\mc R_X)$.

Let us recall the def\mbox{}inition of the subanalytic site $\xsa$
associated to $X$. An element $U\in\Op(X)$ is an open set for $\xsa$
if it is open, relatively compact and subanalytic. The family of open
sets of $\xsa$ is denoted $\opxsac$. For $U\in\opxsac$, a subset $S$
of the family of open subsets of $U$ is said an open covering of $U$
in $\xsa$ if $S\subset\opxsac$ and, for any compact $K$ of $X$, there
exists a f\mbox{}inite subset $S_0\subset S$ such that
$K\cap(\cup_{V\in S_0}V)=K\cap U$. The set of coverings of $U$ in
$\xsa$ is denoted by $\cov U$.

We denote by $\Mod(k_\xsa)$ the category of sheaves of $k$-modules on
the subanalytic site associated to $X$. With the aim of def\mbox{}ining the category
$\Mod(k_\xsa)$, the adjective ``relatively compact'' can be omitted in
the def\mbox{}inition above. Indeed, in \cite[Remark 6.3.6]{ks_indsheaves},
it is proved that $\Mod(k_\xsa)$ is equivalent to the category of
sheaves on the site whose open sets are the open subanalytic subsets
of $X$ and whose coverings are the same as $\xsa$.

Let $\mrm{PSh}(k_\xsa)$ be the category of presheaves of
$k$-modules on $\xsa$.
Denote by $for:\Mod(k_\xsa)\to \mrm{PSh}(k_\xsa)$ the forgetful
functor which associates to a sheaf $F$ on $\xsa$ its underlying
presheaf. It is well known that $for$ admits a left adjoint
$\cdot^a:\mrm{PSh}(k_\xsa)\to \Mod(k_\xsa)$.

We denote by
$$ \rho:X\lra\xsa \ ,$$ 
the natural morphism of sites associated to
$\opxsac\lra\Op(X)$. We refer to \cite{ks_indsheaves} for the
def\mbox{}initions of the functors $\rho_*:\Mod(k_X)\lra\Mod(k_\xsa)$
and $\rho^{-1}:\Mod(k_\xsa)\lra\Mod(k_X)$ and for Proposition
\ref{prop_functors} below.
\begin{prop}\label{prop_functors}
\begin{enumerate}
\item The functor $\rho^{-1}$ is left adjoint to $\rho_*$.
\item The functor $\rho^{-1}$ has a left adjoint denoted by
  $\rho_!:\Mod(k_X)\to\Mod(k_\xsa)$.
\item The functors $\rho^{-1}$ and $\rho_!$ are exact, $\rho_*$ is
  exact on constructible sheaves.
\item The functors $\rho_*$ and $\rho_!$ are fully faithful.
\end{enumerate}
\end{prop}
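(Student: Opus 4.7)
The plan is to verify each item in turn, with the subanalytic finiteness condition (every covering in $\cov U$ admits, on compacts, a finite subcovering) as the crucial ingredient beyond formal site-theoretic arguments.

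For (i), I would first check directly that the inclusion functor $\opxsac\to\Op(X)$ is continuous in the sense of sites: it preserves finite intersections, and any ordinary open cover of a $U\in\opxsac$ refines to a subanalytic open cover by relatively compact subanalytic opens. This makes $\rho$ a morphism of sites in the sense of \cite{ks_som}, and the adjunction $\rho^{-1}\dashv\rho_*$ follows by the general construction: $\rho_*F$ is the presheaf $U\mapsto F(U)$ on $\opxsac$, which is automatically a sheaf on $\xsa$, while $\rho^{-1}$ is the sheafification of the naive presheaf inverse image.

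For (ii), I would construct $\rho_!$ explicitly. For $F\in\Mod(k_X)$, let $\rho_!F$ be the sheaf associated to the presheaf
$$ U \longmapsto \varinjlim_{V\in\opxsac,\ \overline V\subset U} F(V). $$
The adjunction $\ho[\Mod(k_\xsa)]{\rho_!F}{G}\simeq\ho[\Mod(k_X)]{F}{\rho^{-1}G}$ is then verified by writing both sides as limits of groups of morphisms indexed by cofinal systems of opens and invoking finite refinement for coverings in $\cov U$.

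For (iii), the right-exactness of $\rho^{-1}$ and $\rho_!$ is automatic once (i) and (ii) are established. Left-exactness of $\rho^{-1}$ follows from the stalk formula $(\rho^{-1}G)_x = \varinjlim_{U\ni x,\ U\in\opxsac} G(U)$ together with the exactness of filtered colimits. Left-exactness of $\rho_!$ uses the explicit formula above and the fact that for $\overline V\subset U$ any open cover of $V$ by subanalytic opens is refined by a finite subcover, so the sheafification step does not destroy exactness. The exactness of $\rho_*$ on constructible sheaves is reduced to the observation that, after fixing a subanalytic stratification adapted to the sheaves in question, the sections on $U\in\opxsac$ are governed by a finite amount of data, and short exact sequences are preserved.

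For (iv), I would verify that the counit $\rho^{-1}\rho_*\to\mathrm{id}$ and the unit $\mathrm{id}\to\rho^{-1}\rho_!$ are isomorphisms. The first is clear because $\rho_*F$ agrees with $F$ on subanalytic opens, so $(\rho^{-1}\rho_*F)_x = \varinjlim_{V\ni x,\ V\in\opxsac} F(V) = F_x$. The second follows from an analogous stalk computation applied to the presheaf defining $\rho_!F$, using that the system of relatively compact subanalytic opens $V$ with $\overline V\subset U$ and $V\ni x$ is cofinal in the system of all open neighborhoods of $x$.

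The main obstacle will be (iii), specifically the exactness of $\rho_!$: beyond the formal right-exactness of a left adjoint, genuine left-exactness requires careful manipulation of the filtered systems defining $\rho_!F$ and essential use of the finite-refinement property of subanalytic coverings. This is also precisely the point that makes $\xsa$ better behaved than an arbitrary site and that underlies the usefulness of the formalism in the rest of the paper.
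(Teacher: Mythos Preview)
The paper does not prove this proposition at all: the sentence immediately preceding it says ``We refer to \cite{ks_indsheaves} for the definitions of the functors $\rho_*$ \ldots\ and for Proposition \ref{prop_functors} below.'' So there is no argument in the paper to compare yours against; the paper simply quotes the result.

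Your sketch is in the right spirit, but it contains a genuine error in item (ii). The presheaf you propose for $\rho_!F$, namely
\[
U\longmapsto \varinjlim_{V\in\opxsac,\ \overline V\subset U} F(V),
\]
is not the correct one. The paper itself records, immediately after the statement, that $\rho_!F$ is the sheaf on $\xsa$ associated to the presheaf $U\mapsto F(\overline U)$, i.e.\ sections of $F$ over the \emph{closure} of $U$ (equivalently, a colimit over open neighbourhoods of $\overline U$, not over subanalytic opens compactly contained in $U$). These two presheaves give different sheaves on $\xsa$. For instance, take $X=\rea$, let $F$ be the constant sheaf $k$ on the closed interval $[0,1]$ extended by zero, and let $U=(-1,0)$. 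Your presheaf gives $0$, since no $V$ with $\overline V\subset U$ meets $[0,1]$; the correct presheaf gives $F(\overline U)=F([-1,0])=k$, and this section survives sheafification in $\xsa$ because any covering of $(-1,0)$ in $\cov{(-1,0)}$ admits a finite subcover, and among finitely many subanalytic opens covering $(-1,0)$ at least one must have $0$ in its closure. So the two associated sheaves differ already on this $U$.

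Since your verification of the adjunction in (ii) and your stalk computation for the unit $\mathrm{id}\to\rho^{-1}\rho_!$ in (iv) both rest on your formula, those parts of the argument do not go through as written. With the correct description $U\mapsto F(\overline U)$, the adjunction, the exactness of $\rho_!$, and the full faithfulness are established in \cite{ks_indsheaves}; you are right that the finite-refinement property of subanalytic coverings is the essential input beyond general site theory.
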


Through $\rho_*$, we will consider $\Mod(k_X)$ as a subcategory of $\Mod(k_\xsa)$.

The functor $\rho_!$ is described as follows. If
$U\in\opxsac$ and $F\in\Mod(k_X)$, then $\rho_!(F)$ is the sheaf on $\xsa$
associated to the presheaf $U\mapsto F\bigr{\overline U}$. 


Now, we are going to recall the definition of
$\rea$-preconstructibility and $\rea$-constructibility for sheaves on
$\xsa$.

Denote by $\bdc[\rea-c]{\com_X}$ the full triangulated subcategory of
the bounded derived category of $\mod(\com_X)$ consisting of complexes
whose cohomology groups are $\rea$-constructible sheaves. In what
follows, for $F\in\bdc{\com_\xsa}$ and $G\in\bdc[\rea-c]{\com_X}$, we
set for short
$$ \RH[\com_X]GF:=\rho^{-1}\RH[\com_\xsa]GF\in\bdc{\com_X} \
 $$
and
$$ \Rh[\com_X]GF:=R\Gamma(X,\RH[\com_X]GF) \ . $$

\begin{df} Let $F\in\bdc{\com_\xsa}$.
  \begin{enumerate}
  \item We say that $F$ is \emph{$\rea$-preconstructible} if for any
    $G\in\bdc[\rea-c]{\com_X}$ with compact support and any
    $j\in\integer$,
$$ \dim_\com\mrm R^jHom_{\com_X}(G,F))<+\infty\ . $$
\item   We say that $F$ is
  \emph{$\rea$-constructible} if for any $G\in\bdc[\rea-c]{\com_X}$,
$$ \RH[\com_X]GF\in\bdc[\rea-c]{\com_X}\ . $$
\end{enumerate}
\end{df}

\subsection{$\D$-modules and tempered holomorphic functions}

For the general theory of $\D$-modules, we refer to
\cite{kashiwara_dmod} and \cite{bjork}. For the basic results on the
tempered De Rham functor we refer to \cite{ks_indsheaves}. For an
introduction to derived categories, we refer to \cite{ks_som}.

Let $X$ be a complex analytic manifold. We denote by $\mathcal{O}_X$ the sheaf
of rings of holomorphic functions on $X$ and by $\D_X$ the sheaf of
rings of linear partial differential operators with coefficients in
$\mathcal{O}_X$.

Given two left $\D_X$-modules $\mc M_1,\mc M_2$, we denote by $\mc
M_1\Dotimes\mc M_2$ the internal tensor product. Let us start by
recalling the following

\begin{prop}[\cite{kashiwara_dmod} Proposition 3.5]\label{prop:tensor_commutation}
Let $\mc N$ be a right $\D_X$-module, $\mc M_1, \mc M_2$ left $\D_X$-modules. Then
$$  \mc N\ou{\D_X}\otimes(\mc M_1\Dotimes\mc M_2)\simeq (\mc N\ou{\mathcal{O}_X}\otimes \mc M_1)\ou{\D_X}\otimes \mc M_2  \ .$$  
\end{prop}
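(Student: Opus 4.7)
The plan is to exhibit the isomorphism directly by comparing the defining relations of the two quotient modules. Both sides are canonically quotients of the triple $\mathcal{O}_X$-tensor product $\mc N\ou{\mathcal{O}_X}\otimes\mc M_1\ou{\mathcal{O}_X}\otimes\mc M_2$, and I would prove that the two sets of relations are literally the same, so that the identity on representatives descends to the required isomorphism. Since everything is local in $X$, it is enough to check the statement stalkwise, or equivalently on a coordinate neighborhood, where $\D_X$ is generated over $\mathcal{O}_X$ by the partial derivatives, so that it suffices to test the relations on vector fields $\xi$.

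First I would recall the two relevant $\D_X$-actions. On the left-hand side, one uses the diagonal $\D_X$-action on $\mc M_1\Dotimes\mc M_2=\mc M_1\ou{\mathcal{O}_X}\otimes\mc M_2$, namely
$$\xi(m_1\otimes m_2)=(\xi m_1)\otimes m_2+m_1\otimes(\xi m_2).$$
On the right-hand side, one uses the standard right $\D_X$-module structure on $\mc N\ou{\mathcal{O}_X}\otimes\mc M_1$ obtained from the right action of $\mc N$ and the left action of $\mc M_1$, namely
$$(n\otimes m_1)\xi=n\xi\otimes m_1-n\otimes\xi m_1.$$
That both formulas actually extend to $\D_X$-actions is a routine verification using the Leibniz rule, and it is already implicit in \cite{kashiwara_dmod}.

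Next I would define
$$\Phi\colon\mc N\ou{\D_X}\otimes(\mc M_1\Dotimes\mc M_2)\longrightarrow(\mc N\ou{\mathcal{O}_X}\otimes\mc M_1)\ou{\D_X}\otimes\mc M_2,$$
by $n\otimes(m_1\otimes m_2)\mapsto(n\otimes m_1)\otimes m_2$, as a map obtained by factoring the identity on triple $\mathcal{O}_X$-tensor products. To see that $\Phi$ is well defined, note that the kernel of the left-hand quotient map is generated, at the level of vector fields, by
$$n\xi\otimes m_1\otimes m_2\;-\;n\otimes(\xi m_1)\otimes m_2\;-\;n\otimes m_1\otimes(\xi m_2),$$
while the kernel of the right-hand quotient map is generated by $((n\otimes m_1)\xi)\otimes m_2-(n\otimes m_1)\otimes(\xi m_2)$, which by the formula for the induced right action above expands to exactly the same expression. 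Hence $\Phi$ descends from the identity on triple $\mathcal{O}_X$-tensor products, and its inverse is obviously given by the obvious reverse assignment.

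The argument is short and essentially formal: nothing has to be estimated, and the only place one has to be careful is to match relations with the correct signs and to check that the diagonal left $\D_X$-action on $\mc M_1\otimes\mc M_2$ and the induced right $\D_X$-action on $\mc N\otimes\mc M_1$ produce the \emph{same} Leibniz-type relation after quotienting. Naturality in all three variables is manifest from the formula, so the isomorphism sheafifies automatically, and no obstacle of a geometric or analytic nature intervenes.
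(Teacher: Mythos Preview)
Your argument is correct and is essentially the standard proof (as in \cite{kashiwara_dmod}, Proposition~3.5): both sides are quotients of the triple $\mathcal{O}_X$-tensor product, and since $\D_X$ is locally generated over $\mathcal{O}_X$ by vector fields, the Leibniz formulas for the diagonal left action on $\mc M_1\ou{\mathcal{O}_X}\otimes\mc M_2$ and for the induced right action on $\mc N\ou{\mathcal{O}_X}\otimes\mc M_1$ impose identical relations. Note, however, that the paper itself does not give a proof of this proposition; it is merely quoted from \cite{kashiwara_dmod}, so there is no ``paper's own proof'' to compare against---your write-up simply supplies the omitted verification.
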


Now, let $T^*X$ denote the cotangent bundle on $X$. We denote by
$\dmod[c] X$ the full subcategory of $\dmod X$ whose objects are
coherent over $\D_X$. For $\M\in\dmod[c] X$ we denote by $\ch\M$ the
characteristic variety of $\M$. Recall that $\ch\M\subset T^*X$ and
that $\mc M$ is said \emph{holonomic} if $\ch\M$ is Lagrangian. We
denote by $\dmod[h] X$ the full subcategory of $\dmod X$ consisting
of holonomic modules.

\sloppy  
We denote by $\bdc[coh]{\D_X}$ (resp. $\bdc[h]{\D_X}$) the full subcategory of
$\dbdx$ consisting of bounded complexes whose cohomology groups are
coherent (resp. holonomic) $\D_X$-modules. 
For
$\M\in\dbdx[coh]$, set $ \ch\M:= \cup_{j\in\integer}\ch H^j(\M)$. 

Let $\pi_X:T^*X\to X$ be the canonical projection, $T^*_XX$ the zero
section of $T^*X$ and $\dot T^*X:=T^*X\setminus T^*_XX$.

For $\M\in\bdc[coh]{\D_X}$, set 
$$ S(\M):=\pi_X\Bigr{\ch\M\cap\dot T^*X} \ . $$

It is well known that, if $\M\in\dbdx[h]$, then $S(\M)\neq X$ is a closed
\emph{analytic} subset of $X$. That is to say, for any $z\in S(\M)$ there exist a
neighbourhood $W$ of $z$ and finitely many functions $f_1,\ldots,
f_k\in\mathcal{O}_X(W)$ such that $S(\M)\cap W=\{x\in W;f_1(x)=\ldots=f_k(x)=0\}$.

\begin{df}
An object $\M\in\dbdx[h]$ is said \emph{regular holonomic} if, for any $x\in X$,
$$ \mathrm{RHom}_{\D_X}(\M,\mathcal{O}_{X,x})\overset{\sim}{\lra} \mathrm{RHom}_{\D_X}(\M,\widehat{\mathcal{O}}_{X,x})  \
, $$ where $\widehat{\mathcal{O}}_{X,x}$ is the $\D_{X,x}$-module of formal power
series at $x$. We denote by $\dbdx[rh]$ the full subcategory of
$\dbdx[h]$ of regular holonomic
$\D_X$-modules. 
\end{df}

Now, let $Z$ be a closed analytic subset of $X$. Let $\mc I_Z$ be the
coherent ideal consisting of the holomorphic functions vanishing on
$Z$, we set
\begin{eqnarray*}
  \Gamma_{[Z]}\mc M&:=&\ou k\varinjlim\ \mcHom[\mathcal{O}_X]{\mathcal{O}_X/\mc I^k_Z}{\mc M} \ ,\\
  \Gamma_{[X\setminus Z]}\mc M&:=&\ou k\varinjlim\ \mcHom[\mathcal{O}_X]{\mc I^k_Z}{\mc M} \ .
\end{eqnarray*}

If $S\subset X$ can be written as $S=Z_1\setminus Z_2$, for $Z_1$ and
$Z_2$ closed analytic sets, then it can be proved that the following
object is well defined

$$ \Gamma_{[S]}\mc M:=\Gamma_{[Z_1]}\Gamma_{[X\setminus Z_2]}\mc M  $$
and that $\Gamma_{[S]}$ is a left exact functor. We denote by
$R\Gamma_{[S]}$ the left derived functor of $\Gamma_{[S]}$.

\begin{thm}[\cite{kashiwara_dmod}, Theorem 3.29]
  \begin{enumerate}
  \item Let $S_1,S_2\subset X$ be difference of closed analytic subsets
    of $X$. Then $$  R\Gamma_{[S_1]}R\Gamma_{[S_2]}\mc M\simeq
    R\Gamma_{[S_1\cap S_2]} \mc M \ .$$
  \item Let $Z$ be a closed analytic subset of $X$. For any $\mc M\in\dbdx$
there exists a distinguished triangle
$$ R\Gamma_{[Z]}\mc M\lra\mc M\lra R\Gamma_{[X\setminus Z]}\mc
M\overset{+1}\lra \ . $$
  \end{enumerate}
\end{thm}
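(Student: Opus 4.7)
The plan is to establish (ii) first, since it will be needed for the reduction in (i); the bulk of (i) then reduces to a two-closed-set computation.

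For (ii), I would start from the short exact sequence of coherent $\O_X$-modules
$$0\lra \mc I_Z^k\lra \O_X\lra \O_X/\mc I_Z^k\lra 0$$
for each $k$. Applying $\RH[\O_X]{\cdot}{\mc M}$ yields, for each $k$, a distinguished triangle
$$\RH[\O_X]{\O_X/\mc I_Z^k}{\mc M}\lra \mc M\lra \RH[\O_X]{\mc I_Z^k}{\mc M}\overset{+1}{\lra}\ .$$
Since filtered colimits of $\O_X$-modules are exact, they preserve distinguished triangles; passing to the colimit over $k$ gives the triangle of (ii), once the two outer terms are identified with $R\Gamma_{[Z]}\mc M$ and $R\Gamma_{[X\setminus Z]}\mc M$. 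This identification is a standard exchange: each $\O_X/\mc I_Z^k$ is coherent and the transition maps of the ind-system are induced by surjections, so the derived functor of $\ou k\varinjlim\mcHom[\O_X]{\O_X/\mc I_Z^k}{\cdot}$ is computed by $\ou k\varinjlim\RH[\O_X]{\O_X/\mc I_Z^k}{\cdot}$, and similarly for $(\mc I_Z^k)_k$.

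For (i), I would first reduce to the closed case. Writing $S_i=A_i\setminus B_i$ with $A_i,B_i$ closed analytic, one has $S_1\cap S_2=(A_1\cap A_2)\setminus(B_1\cup B_2)$, so by definition
$$R\Gamma_{[S_1]}R\Gamma_{[S_2]}\mc M\ =\ R\Gamma_{[A_1]}R\Gamma_{[X\setminus B_1]}R\Gamma_{[A_2]}R\Gamma_{[X\setminus B_2]}\mc M\ .$$
Applying (ii) to replace each $R\Gamma_{[X\setminus B_i]}$ by the triangle involving the identity and $R\Gamma_{[B_i]}$, both sides assemble from the same building blocks, and the statement reduces to pairwise commutativity of $R\Gamma_{[Z]}$ for closed analytic $Z$, together with the identity $R\Gamma_{[Z_1]}R\Gamma_{[Z_2]}\mc M\simeq R\Gamma_{[Z_1\cap Z_2]}\mc M$ in that case.

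The heart of the proof is therefore this closed-case identity. I would attack it via the tensor--Hom adjunction
$$\RH[\O_X]{\O_X/\mc I_{Z_1}^k}{\RH[\O_X]{\O_X/\mc I_{Z_2}^l}{\mc M}}\ \simeq\ \RH[\O_X]{\O_X/\mc I_{Z_1}^k\otimes^L_{\O_X}\O_X/\mc I_{Z_2}^l}{\mc M}\ ,$$
and then take the filtered colimit in $(k,l)$. The derived tensor product agrees with $\O_X/(\mc I_{Z_1}^k+\mc I_{Z_2}^l)$ up to higher Tor terms which will be killed in the colimit by a cofinality argument. Since $V(\mc I_{Z_1}+\mc I_{Z_2})=Z_1\cap Z_2$ as sets, the analytic Nullstellensatz for coherent ideals provides, locally, integers $n=n(k,l)$ with $\mc I_{Z_1\cap Z_2}^{n}\subseteq\mc I_{Z_1}^k+\mc I_{Z_2}^l\subseteq\mc I_{Z_1\cap Z_2}$; hence the two ind-systems $(\O_X/(\mc I_{Z_1}^k+\mc I_{Z_2}^l))_{k,l}$ and $(\O_X/\mc I_{Z_1\cap Z_2}^n)_n$ are cofinal, yielding the desired $R\Gamma_{[Z_1]}R\Gamma_{[Z_2]}\mc M\simeq R\Gamma_{[Z_1\cap Z_2]}\mc M$.

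The main obstacle will be the careful handling of derived functors and filtered colimits: one must justify that higher Tor contributions to $\O_X/\mc I_{Z_1}^k\otimes^L_{\O_X}\O_X/\mc I_{Z_2}^l$ are washed out by the cofinality replacement, and that the naive colimit of $\RH$ indeed computes the derived local cohomology. The analytic Nullstellensatz step is intrinsically local, so a routine but nontrivial patching argument is needed to globalize. Everything else (exactness of $\ou k\varinjlim$, compatibility of $\RH$ with distinguished triangles, and the reduction in (i)) is formal given these two inputs.
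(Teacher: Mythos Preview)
The paper does not prove this statement: it is quoted without proof as Theorem~3.29 of \cite{kashiwara_dmod}, so there is no ``paper's own proof'' to compare against. Your outline is along the standard lines of that reference. Part~(ii) via the colimit over~$k$ of the triangles coming from $0\to\mc I_Z^k\to\O_X\to\O_X/\mc I_Z^k\to 0$ is exactly the usual argument, and your identification of the closed--closed case $R\Gamma_{[Z_1]}R\Gamma_{[Z_2]}\simeq R\Gamma_{[Z_1\cap Z_2]}$ as the heart of~(i), proved by tensor--Hom adjunction together with cofinality of the ind-systems $(\O_X/(\mc I_{Z_1}^k+\mc I_{Z_2}^l))_{k,l}$ and $(\O_X/\mc I_{Z_1\cap Z_2}^n)_n$ via the analytic Nullstellensatz, is correct.

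One organizational caution: your reduction of the general~(i) to the closed case is phrased a bit loosely. Saying that ``both sides assemble from the same building blocks'' after inserting the triangles from~(ii) is not by itself an argument, since to match the pieces you implicitly need commutation relations among the $R\Gamma_{[A_i]}$ and $R\Gamma_{[X\setminus B_j]}$, which is close to what you are trying to prove. The clean way is to first establish the closed--closed identity, deduce from it and~(ii) that $R\Gamma_{[Z]}$ and $R\Gamma_{[X\setminus W]}$ commute for closed $Z,W$ (and that $R\Gamma_{[X\setminus B_1]}R\Gamma_{[X\setminus B_2]}\simeq R\Gamma_{[X\setminus(B_1\cup B_2)]}$), and only then reassemble the general case. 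The technical points you flag (exchange of derived $\mcHom$ with filtered colimits over coherent ind-systems, and the disappearance of higher Tor after cofinal replacement) are genuine and are where the actual work in Kashiwara's proof lies.
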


Let us also recall the following fundamental 

\begin{thm}[\cite{kashiwara_dmod} Theorem 4.30]\label{thm:kashiwara_lemma}
  Let $Z$ be a closed submanifold of $X$. Then the category of
  coherent $\D_X$-modules supported by $Z$ is equivalent to the
  category of coherent $\D_Z$-modules.
\end{thm}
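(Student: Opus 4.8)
\emph{Proof idea.}\ The plan is to produce a pair of mutually quasi-inverse functors. Write $i\colon Z\hookrightarrow X$ for the inclusion and let $i_+$ be the $\D$-module direct image along $i$. Since, locally, a closed embedding of manifolds factors as a finite composition of codimension-one closed embeddings, and since $i_+$ is transitive with respect to composition, it is enough to treat the codimension-one case and then work locally; so I would assume that $X$ carries a coordinate $t$ with $Z=\{t=0\}$ and that the other coordinates restrict to coordinates on $Z$, i.e.\ $X\simeq Z\times\com_t$. (The reduction from codimension $d$ to codimension $1$ is the routine induction $Z\subset Z'\subset X$ with $Z'$ of codimension $d-1$ and $Z$ of codimension $1$ in $Z'$, together with the remark that $\D_X$-modules supported on $Z$ correspond under $i'_+$ to $\D_{Z'}$-modules supported on $Z$.)

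Next I would write the two functors down explicitly. For $\N$ a coherent $\D_Z$-module set
$$ i_+\N\;:=\;\bigoplus_{k\geq0}\N\,\partial_t^{\,k}, $$
with left $\D_X$-module structure determined by letting $\D_Z$ act on $\N$, by $\partial_t\cdot\bigr{n\,\partial_t^{\,k}}=n\,\partial_t^{\,k+1}$, and by $t\cdot\bigr{n\,\partial_t^{\,k}}=-k\,n\,\partial_t^{\,k-1}$. One checks the defining relations of $\D_X$, that $i_+\N$ is supported on $Z$, and that it is $\D_X$-coherent since it is generated over $\D_X$ by the image of $\N$. In the other direction, for $\M$ a coherent $\D_X$-module supported on $Z$ set
$$ i^{\flat}\M\;:=\;\ker\bigr{t\colon\M\to\M}\;\simeq\;\mcHom[\mathcal{O}_X]{\mathcal{O}_X/\mc I_Z}{\M}, $$
which is naturally a $\D_Z$-module because the vector fields tangent to $Z$ commute with multiplication by $t$ and hence preserve $\ker t$. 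The easy half of the equivalence is $i^{\flat}i_+\N\simeq\N$: from the formula for the $t$-action the kernel of $t$ on $i_+\N$ is exactly the summand $\N\,\partial_t^{\,0}$, functorially in $\N$.

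The substance of the proof is the isomorphism $i_+i^{\flat}\M\overset{\sim}{\lra}\M$ for $\M$ coherent over $\D_X$ and supported on $Z$; equivalently, writing $\M_0:=\ker\bigr{t\colon\M\to\M}$, I must show that the canonical $\D_X$-linear map $\bigoplus_{k\geq0}\partial_t^{\,k}\M_0\to\M$ is bijective. This is where coherence is used essentially: a coherent $\D_X$-module supported on $Z$ is, locally, annihilated by a power of $\mc I_Z$ — pick a good filtration; its characteristic variety lies in $\pi_X^{-1}(Z)$, so a power of $\mc I_Z$ kills the associated graded and therefore, one filtration step at a time, kills $\M$ — hence $t$ acts locally nilpotently, $\M=\bigcup_{N}\ker t^{N}$ locally. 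Granting this, injectivity follows by applying $t^{m}$ to a hypothetical relation $\sum_{k=0}^{m}\partial_t^{\,k}n_k=0$ with $n_k\in\M_0$ and using $t^{j}\partial_t^{\,k}n=(-1)^{j}\tfrac{k!}{(k-j)!}\,\partial_t^{\,k-j}n$, valid when $tn=0$ and $j\leq k$, which forces $n_m=0$ and, descending, all $n_k=0$. Surjectivity follows by induction on the nilpotence order: if $t^{N+1}s=0$ with $N\geq1$, then $w:=\tfrac{(-1)^{N}}{N!}\,t^{N}s$ lies in $\M_0$ and the same identity gives $t^{N}\bigr{s-\partial_t^{\,N}w}=0$, so $s-\partial_t^{\,N}w\in\ker t^{N}$ and the inductive hypothesis applies. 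This yields the decomposition $\M=\bigoplus_{k\geq0}\partial_t^{\,k}\M_0$ and the isomorphism $i_+\M_0\overset{\sim}{\lra}\M$. Lastly, $\M_0=i^{\flat}\M$ is coherent over $\D_Z$: writing local $\D_X$-generators $m_i$ of $\M$ as $m_i=\sum_{k}\partial_t^{\,k}n_{ik}$ with $n_{ik}\in\M_0$, the finitely many $n_{ik}$ generate $\M_0$ over $\D_Z$, and finite presentation is automatic since $\D_Z$ is a coherent sheaf of rings.

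I expect the surjectivity assertion — the decomposition $\M=\bigoplus_{k\geq0}\partial_t^{\,k}\M_0$ — to be the main obstacle, precisely because it is the step that genuinely needs coherence: the statement is false for arbitrary $\D_X$-modules supported on $Z$, and the only route I see to the local nilpotence of $t$ passes through the characteristic-variety/good-filtration machinery. Everything else — the relation checks for $i_+$, the identity $i^{\flat}i_+\simeq\mathrm{id}$, functoriality of both constructions, and the passage from the local codimension-one case to a general closed submanifold by composing embeddings — is formal.
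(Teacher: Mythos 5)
This statement is cited in the paper from Kashiwara's book (Theorem~4.30 of \cite{kashiwara_dmod}); the paper itself gives no proof. Your proposal is a correct and complete proof, and it is essentially the standard argument appearing in the cited source: reduce to codimension one, identify the direct image $i_+\N$ with $\bigoplus_{k}\N\,\partial_t^{\,k}$ with the twisted $t$-action, take $i^{\flat}\M=\ker(t)$ as the quasi-inverse, use local $t$-nilpotence (via good filtrations and the characteristic variety lying over $Z$) to prove $i_+i^{\flat}\M\xrightarrow{\sim}\M$, and deduce coherence of $\ker(t)$ over $\D_Z$ from coherence of $\M$ over $\D_X$.
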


Given two complex analytic manifolds $X$ and $Y$ of dimension,
respectively, $d_X$ and $d_Y$ and a holomorphic morphism $f:X\to Y$,
we denote by $\dfinv f:\dmod Y\to\dmod X$ the inverse image functor
and by $\Dfinv f:\dbd Y\to \dbd X$ its derived functor. Recall that
$f$ is said \emph{smooth} if the corresponding maps of tangent spaces
$T_xX\to T_{f(x)}Y$ are surjective for any $x\in X$. If $f$ is a
smooth map, then $\dfinv f$ is an exact functor. For Proposition
\ref{prop:comm_supp_inv_im} below, we refer to Proposition 2.5.27 of
\cite{bjork} and to Proposition 3.35 of \cite{kashiwara_dmod}.

\begin{prop}\label{prop:comm_supp_inv_im}
Let $Z\subset Y$ be an analytic set, $f:X\to Y$ a holomorphic
  map, $\mc M\in\bdc [h]{\D_Y}$.  Then
 $$  R\Gamma_{[f^{-1}(Z)]}(\mathcal{O}_X)\simeq \Dfinv f(R\Gamma_{[Z]}\mc O_Y)  \
 \textrm{ and } \  R\Gamma_{[Z]}\mc M\simeq R\Gamma_{[Z]}\mc O_Y\Dotimes\mc M
    \ .$$
 In particular, 
$$ R\Gamma_{[f^{-1}(Z)]}(\Dfinv f\mc M)\simeq \Dfinv f(R\Gamma_{[Z]}\mc M)   \ . $$
\end{prop}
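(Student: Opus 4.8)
The plan is to reduce both stated isomorphisms to the case where $Z$ is a hypersurface, handle that case by hand, and then reassemble the general case; the ``in particular'' part will then follow formally. All three assertions are local on $X$ and $Y$, so I would work in coordinate charts throughout. A first useful observation is that $R\Gamma_{[Z]}$ depends only on $Z$ as a set: if $\mc J\subset\mc O_Y$ is any coherent ideal with $V(\mc J)=Z$, then $\mc J\subseteq\mc I_Z$ and, by the analytic Nullstellensatz together with the noetherianity of $\mc O_{Y,y}$, one has $\mc I_Z^m\subseteq\mc J$ for $m\gg 0$; hence the pro-systems $\{\mc J^k\}_k$ and $\{\mc I_Z^k\}_k$ are cofinal and $R\Gamma_{[Z]}$ may be computed with $\mc J$. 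Taking $\mc J=(g_1,\dots,g_N)$ with $g_i\in\mc O_Y$, and dropping those $g_i$ that vanish identically (if all of them do, then $Z=Y$ locally and the statement is trivial), I get $Z=\bigcap_{i=1}^N Z_i$ with $Z_i=\{g_i=0\}$ a hypersurface, hence $f^{-1}(Z)=\bigcap_i f^{-1}(Z_i)$ with $f^{-1}(Z_i)=\{g_i\circ f=0\}$, and by \cite[Theorem 3.29]{kashiwara_dmod} both $R\Gamma_{[Z]}\simeq R\Gamma_{[Z_1]}\circ\cdots\circ R\Gamma_{[Z_N]}$ and $R\Gamma_{[f^{-1}(Z)]}\simeq R\Gamma_{[f^{-1}(Z_1)]}\circ\cdots\circ R\Gamma_{[f^{-1}(Z_N)]}$.

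\textbf{The hypersurface case.} For $g\in\mc O_Y$ and $Z_0=\{g=0\}$, I would compute the filtered colimit $\varinjlim_k\mcHom[\mathcal{O}_Y]{(g^k)}{\mc M}$ directly: identifying each term with $\mc M$, the transition maps become multiplication by $g$, so $\Gamma_{[Y\setminus Z_0]}\mc M\simeq\mc M[g^{-1}]$. Since this functor is exact and $\mc O_Y[g^{-1}]$ is $\mc O_Y$-flat, $R\Gamma_{[Y\setminus Z_0]}\mc M\simeq\mc M[g^{-1}]\simeq\mc O_Y[g^{-1}]\Dotimes\mc M$ (with the localization $\D_Y$-structure on the right), and in particular $R\Gamma_{[Y\setminus Z_0]}\mc O_Y\simeq\mc O_Y[g^{-1}]$. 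Applying the triangulated functor $(-)\Dotimes\mc M$ to the distinguished triangle $R\Gamma_{[Z_0]}\mc O_Y\to\mc O_Y\to\mc O_Y[g^{-1}]\overset{+1}{\longrightarrow}$ and using the flatness of $\mc O_Y$ and $\mc O_Y[g^{-1}]$ to identify the last two terms with $\mc M$ and $\mc M[g^{-1}]$ gives the second isomorphism for $Z_0$. Applying instead $\Dfinv f$ to the same triangle, using $\Dfinv f\mc O_Y\simeq\mc O_X$ and $\Dfinv f(\mc O_Y[g^{-1}])\simeq\mc O_X[(g\circ f)^{-1}]$ — because $f^{-1}$ of a localization of $f^{-1}\mc O_Y$ is again flat over $f^{-1}\mc O_Y$, so no higher $\mathrm{Tor}$ appears — and comparing with the triangle $R\Gamma_{[f^{-1}(Z_0)]}\mc O_X\to\mc O_X\to\mc O_X[(g\circ f)^{-1}]\overset{+1}{\longrightarrow}$ gives the first isomorphism for $Z_0$; the same argument, run with an arbitrary $\mc N\in\bdc[h]{\D_Y}$ in place of $\mc O_Y$, shows $\Dfinv f R\Gamma_{[Z_0]}\mc N\simeq R\Gamma_{[f^{-1}(Z_0)]}\Dfinv f\mc N$, since $\Dfinv f$ commutes with $\mc O$-localization.

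\textbf{Reassembly.} Iterating the hypersurface case of the second isomorphism along $R\Gamma_{[Z]}\simeq R\Gamma_{[Z_1]}\circ\cdots\circ R\Gamma_{[Z_N]}$ (the extension from modules to bounded complexes being formal from the localization triangle) yields $R\Gamma_{[Z]}\mc M\simeq R\Gamma_{[Z_1]}\mc O_Y\Dotimes\cdots\Dotimes R\Gamma_{[Z_N]}\mc O_Y\Dotimes\mc M$; setting $\mc M=\mc O_Y$ identifies the first $N$ factors with $R\Gamma_{[Z]}\mc O_Y$, so, by associativity and commutativity of $\Dotimes$ (cf. Proposition \ref{prop:tensor_commutation}), $R\Gamma_{[Z]}\mc M\simeq R\Gamma_{[Z]}\mc O_Y\Dotimes\mc M$, which is the second isomorphism. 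Iterating likewise $\Dfinv f R\Gamma_{[Z_i]}\simeq R\Gamma_{[f^{-1}(Z_i)]}\Dfinv f$ along the two factorizations gives $\Dfinv f R\Gamma_{[Z]}\mc O_Y\simeq R\Gamma_{[f^{-1}(Z)]}\mc O_X$, the first isomorphism. Finally I would deduce the last isomorphism formally: applying the second isomorphism on $X$, then the first, then the compatibility of $\Dfinv f$ with $\Dotimes$ (\cite{kashiwara_dmod}), then the second isomorphism on $Y$,
$$ R\Gamma_{[f^{-1}(Z)]}(\Dfinv f\mc M)\simeq R\Gamma_{[f^{-1}(Z)]}\mc O_X\Dotimes\Dfinv f\mc M\simeq\Dfinv f(R\Gamma_{[Z]}\mc O_Y)\Dotimes\Dfinv f\mc M\simeq\Dfinv f\left(R\Gamma_{[Z]}\mc O_Y\Dotimes\mc M\right)\simeq\Dfinv f\left(R\Gamma_{[Z]}\mc M\right) . $$

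\textbf{Main obstacle.} The genuinely delicate point is that $\Dfinv f$ is a non-exact, honestly derived functor once $f$ fails to be smooth, so it does not obviously commute with the infinite colimits defining $R\Gamma_{[Z]}$; the whole reduction above exists in order to sidestep this, since for a hypersurface $R\Gamma_{[Y\setminus Z]}$ degenerates to the exact, flat localization $\mc M\mapsto\mc M[g^{-1}]$, with which $\Dfinv f$ trivially commutes. The second thing I would need to pin down — and here I would lean on \cite{kashiwara_dmod} and \cite{bjork} rather than reprove it — is the $\D_Y$-linearity of the localization identifications, i.e. that $R\Gamma_{[Y\setminus Z]}(-)$ really equals $R\Gamma_{[Y\setminus Z]}\mc O_Y\Dotimes(-)$ as a functor with values in $\bdc[h]{\D_Y}$ and not merely after forgetting to $\mc O_Y$-modules, together with the monoidality of $\Dfinv f$ for $\Dotimes$ used in the final display.
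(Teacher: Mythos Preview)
The paper does not give its own proof of this proposition; it simply refers to Proposition 2.5.27 of \cite{bjork} and Proposition 3.35 of \cite{kashiwara_dmod}. Your argument is a correct reconstruction of the standard proof found in those references: reduce locally to a finite intersection of hypersurfaces via \cite[Theorem 3.29]{kashiwara_dmod}, identify $R\Gamma_{[Y\setminus\{g=0\}]}$ with the exact, flat localization $(-)[g^{-1}]$, compare the resulting distinguished triangles, and iterate. One cosmetic point you could make explicit: even though you drop the $g_i$ vanishing identically on $Y$, the pullbacks $g_i\circ f$ may still vanish identically on components of $X$; this causes no trouble, since then both $\mc O_X[(g_i\circ f)^{-1}]$ and $\Dfinv f(\mc O_Y[g_i^{-1}])=\mc O_X\otimes_{f^{-1}\mc O_Y}f^{-1}\mc O_Y[g_i^{-1}]$ are zero on those components and the comparison of triangles goes through unchanged.
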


Given $f\in\mathcal{O}_X$, let $Z:=f^{-1}(0)$. We denote by $\mathcal{O}_X[*Z]$ the
sheaf of meromorphic functions with poles on $Z$. Let us remark that
$\mathcal{O}_X[*Z]$ is smooth over $\mathcal{O}_X$. Given $\mc M\in\Mod(\D_X)$, we set
$\mc M[*Z]:= \mc M\overset{D}\otimes\mathcal{O}_X[*Z]$. One can prove that
$R\Gamma_{[X\setminus Z]}\mc M\simeq \mc M[*Z] $.

We denote by $X_\rea$ the real analytic manifold underlying $X$ and by
$\D_{X_\rea}$ the sheaf of linear differential operators with real
analytic coefficients. Furthermore, we denote by $\overline X$ the complex
conjugate manifold, in particular $\mathcal{O}_{\overline X}$ is the sheaf of
anti-holomorphic functions. Denote by $\Db_{X_\rea}$ the sheaf of
distributions on $X_\rea$ and, for a closed subset $Z$ of $X$, by
$\Gamma_Z(\Db_{X_\rea})$ the subsheaf of sections supported by
$Z$. One denotes by $\dbtxsa$ the presheaf of \emph{tempered
  distributions} on $X_{\rea}$ def\mbox{}ined by
$$\Op^c(\xsa)\ni U \longmapsto \dbtxsa(U):=\Gamma(X;\dbxr)\big/\Gamma_{X\setminus U}(X;\dbxr) \ .$$
In \cite{ks_indsheaves} it is proved that $\dbtxsa$ is a sheaf on $\xsa$. This
sheaf is well def\mbox{}ined in the category $\mod(\rho_!
\D_X)$. Moreover, for any $U\in\opxsac$, $\dbtxsa$ is
$\Gamma(U,\cdot)$-acyclic.

One def\mbox{}ines the complex of sheaves $\ot_\xsa \in D^b\bigr{\rho_!\D_X}$ 
of tempered holomorphic functions as
$$\ot_\xsa:=R\mathcal Hom_{\rho_! \mathcal{D}_{\overline X}}\bigr{\rho_!\mathcal{O}_{\overline X},\dbt_{X_{\rea}}}\ .$$

Let $\Omega^j_X$ be the sheaf of differential forms of degree $j$ and,
for sake of simplicity, let us write $\Omega_X$ instead of
$\Omega^{d_X}_X$.

Set
\begin{equation}
  \label{eq:drd}
  \omt X:=\rho_!\Omega_X\ou{\rho_!\mathcal{O}_X}\otimes \otxsa\ , \hspace{8mm}
  \Omega^{\dbt}_X:=\rho_!\Omega_X\ou{\rho_!\mathcal{O}_X}\otimes \dbt_\xsa\ ,
  \hspace{8mm}
  \Omega^{j,\dbt}_X:=\rho_!\Omega^j_X\ou{\rho_!\mathcal{O}_X}\otimes \dbt_\xsa \ .
\end{equation}

Let us now define the De Rham functors over $\D_X$, let $\mc M\in\bdc{\D_X}$,

\[
\mrm{DR}_{\D_X}\mc M:=\Omega\ou[L]{\D_X}\otimes\mc M\ ,\hspace{5mm}
\drt [X]{\mc M }:= \omt X\ou [L]{\rho_!\D_X}\otimes\rho_! \mc M\ ,\hspace{5mm}
\mrm{DR}^{\dbt}_{\D_X}\mc M:=\Omega^{\dbt}\ou[L]{\rho_!\D_X}\otimes\rho_!\mc M\ .
\]

Furthermore, let us define the De Rham complexes over $\mathcal{O}_X$, let $\mc
M\in\dmod X$,

\begin{equation}\label{eq:dro}
  \mrm{DR}_{\mathcal{O}_X}\mc M  :=  0\lra\mc
  M\overset{\nabla^{(0)}}\lra\Omega^1_X\ou{\mathcal{O}_X}\otimes \mc M\overset{\nabla^{(1)}}\lra\Omega^2_X\ou{\mathcal{O}_X}\otimes \mc M\lra
  \ldots \ ,
\end{equation}
\begin{equation}\label{eq:drodbt}
  \mrm{DR}^{\dbt}_{\mathcal{O}_X}\mc M  :=  0\lra\rho_!\mc
  M\overset{\nabla^{(0)}}\lra\Omega^{1,\dbt}_X\ou{\rho_!\mathcal{O}_X}\otimes\rho_!\mc M\overset{\nabla^{(1)}}\lra\Omega^{2,\dbt}_X\ou{\rho_!\mathcal{O}_X}\otimes\rho_!\mc M\lra 
  \ldots \ ,
\end{equation}
where $\nabla^{(j)}$ is defined by the action of vector fields on $\mc
M$.

The complex $\mrm{DR}_{\mathcal{O}_X}$ is an object in the category of
differential complexes. As we do not need the theory of differential
complexes, we do not recall it here and we refer to
\cite{maisonobe_sabbah} and to the references cited there. Moreover,
the objects and the results recalled here on the De Rham functors can
be stated in much more general settings, we refer to the bibliography.

Let $\bdc[\com-c]{\com_X}$ be the full subcategory of $\bdc{\com_X}$
whose objects have $\com$-constructible cohomology groups
(\cite{ks_som}).

\begin{prop}[\cite{bjork}, Proposition 2.2.10]\label{prop:drd_dro}
  Let $\mc M\in\bdc[h]{\D_X}$. The complexes $\mrm{DR}_{\mathcal{O}_X}\mc M$ and
  $\mrm{DR}_{\D_X}\mc M[-d_X]$ are isomorphic in the category $\bdc [\com-c]{\com_X}$.
\end{prop}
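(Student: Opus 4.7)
The plan is to split the statement into two assertions: (a) that the two complexes are isomorphic in $\bdc{\com_X}$, and (b) that the common cohomology is $\com$-constructible. Part (b) is a direct invocation of Kashiwara's classical constructibility theorem for the holomorphic de Rham complex of a holonomic $\D_X$-module, reduced by an easy dévissage along the truncation triangles from $\dbdx[h]$ to $\dmod[h] X$. So the substantive step is (a).

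For (a), I would use the Spencer (Koszul) resolution of $\Omega_X$ as a right $\D_X$-module. Explicitly, set
\[
\mathrm{Sp}^{-p}:=\Omega^{d_X-p}_X\ou{\mathcal{O}_X}\otimes \D_X \qquad (p=0,\dots,d_X),
\]
placed in degrees $-d_X,\dots,0$, with the Koszul differential built from the de Rham differential on $\Omega^{\bullet}_X$ twisted by the right action of vector fields. Working in local coordinates $(x_1,\dots,x_{d_X})$ and interpreting $\mathrm{Sp}^\bullet$ as a Koszul complex on the regular sequence of partial derivatives acting on $\Omega^\bullet_X\otimes_{\mathcal{O}_X}\D_X$, one checks that $\mathrm{Sp}^\bullet$ is exact in negative degrees and has $H^0=\Omega_X$ (as a right $\D_X$-module). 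Each $\mathrm{Sp}^{-p}$ is locally $\D_X$-free of finite rank, hence flat, so $\mathrm{Sp}^\bullet$ computes the derived tensor product.

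Then
\[
\mrm{DR}_{\D_X}\mc M \;=\; \Omega_X\ou[L]{\D_X}\otimes\mc M \;\simeq\; \mathrm{Sp}^\bullet\ou{\D_X}\otimes\mc M,
\]
and the obvious isomorphism $\Omega^p_X\otimes_{\mathcal{O}_X}\D_X\otimes_{\D_X}\mc M\simeq\Omega^p_X\otimes_{\mathcal{O}_X}\mc M$ identifies the right-hand side with the complex
\[
\bigl[\mc M\lra\Omega^1_X\otimes_{\mathcal{O}_X}\mc M\lra\cdots\lra\Omega^{d_X}_X\otimes_{\mathcal{O}_X}\mc M\bigr]
\]
in degrees $-d_X,\dots,0$, where a local computation shows that the induced differential is precisely the connection $\nabla^{(j)}$ appearing in the definition \eqref{eq:dro} of $\mrm{DR}_{\mathcal{O}_X}\mc M$. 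Shifting by $[-d_X]$ then places the complex in degrees $0,\dots,d_X$, yielding the claimed isomorphism. For $\mc M\in\bdc[h]{\D_X}$ the same identification, applied term by term using a bounded flat resolution of $\mc M$, extends the statement to complexes.

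The main obstacle, if one wanted a self-contained argument, is verifying that the Spencer differential on $\mathrm{Sp}^\bullet\otimes_{\D_X}\mc M$ coincides with the holomorphic connection used in \eqref{eq:dro}; this is a coordinate computation where care must be taken with the side-switching between $\Omega_X$ as a right $\D_X$-module and the natural left $\D_X$-action on $\mc M$. Once this is done, the $\com$-constructibility of the cohomology follows from Kashiwara's theorem applied to each $H^j(\mc M)$ together with the spectral sequence associated to the canonical filtration of $\mc M\in\bdc[h]{\D_X}$.
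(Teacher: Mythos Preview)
The paper does not supply its own proof of this proposition: it is quoted verbatim from \cite{bjork}, Proposition 2.2.10, and used as a black box. So there is no argument in the paper to compare against.

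That said, your sketch is essentially the standard proof and is correct in outline. The Spencer resolution $\mathrm{Sp}^\bullet\to\Omega_X$ by locally free right $\D_X$-modules is exactly how one computes $\Omega_X\overset{L}{\otimes}_{\D_X}\mc M$, and the identification of the resulting complex with $\mrm{DR}_{\mathcal{O}_X}\mc M$ (after shift) is a routine check. The point you flag about matching the Spencer differential with the connection $\nabla^{(j)}$ is the only place where real work is required, and it is indeed a local coordinate computation; Bj\"ork and Kashiwara both carry it out. Your reduction of the $\com$-constructibility assertion to Kashiwara's theorem on $\dmod[h]X$ via the truncation triangles is also the standard d\'evissage. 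One minor quibble: in the paper's conventions $\mrm{DR}_{\mathcal{O}_X}\mc M$ is defined in \eqref{eq:dro} only for $\mc M\in\dmod X$, not for complexes, so strictly speaking the statement as written in the proposition already presumes the extension to $\bdc[h]{\D_X}$ that you describe at the end; this is harmless but worth being aware of.
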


One can prove that the isomorphism of Proposition
\ref{prop:drd_dro} extends to
\begin{equation}
  \label{eq:drodbt_drddbt}
\mrm{DR}^{\dbt}_{\mathcal{O}_X}\mc M\simeq\mrm{DR}^{\dbt}_{\D_X}\mc M[-d_X]
\end{equation}
 as objects in $\bdc{\com_\xsa}$.




\begin{thm}[\cite{ks_indsheaves} Theorem 7.4.12, Theorem 7.4.1]\label{thm:indsheaves}
  \begin{enumerate}
  \item  Let $\mc L\in\dbdx[rh]$ and set $L:=\RH[\D_X]{\mc L}{\mathcal{O}_X}$. There
  exists a natural isomorphism in $\bdc{\com_\xsa}$
$$  \drt[X]{\mc L}\simeq\RH[\com_\xsa]L{\omt X}  \ . $$
\item Let $f:X\to Y$ be a holomorphic map and let $\N\in\dbd Y$. There is
  a natural isomorphism in $D^b(\com_\xsa)$
$$ \drt [X]{(\Dfinv f \mc N)}[d_X]\overset\sim\lra f^!(\drt [Y]{\mc N})[d_Y] \ .
$$
  \end{enumerate}
\end{thm}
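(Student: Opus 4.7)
The plan is to prove each part by reducing to a manageable local or special case.

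For part (i), the non-tempered analogue $\mrm{DR}_{\mathcal{O}_X}\mc L \simeq \RH[\com_X]{L}{\mathcal{O}_X}$ is the standard content of the Riemann--Hilbert correspondence for regular holonomic modules. My plan is to lift this isomorphism to the tempered setting on $\xsa$ replacing $\mathcal{O}_X$ by $\omt X$. Using \eqref{eq:drodbt_drddbt} it is enough to work with $\mrm{DR}^{\dbt}_{\mathcal{O}_X}\mc L$. First, I would construct a natural morphism $\drt[X]{\mc L}\to\RH[\com_\xsa]{L}{\omt X}$ coming from the canonical pairing $\mc L\ou{\mathcal{O}_X}\otimes^L L\to\mathcal{O}_X$ (evaluation of solutions). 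The question is then to show this is an isomorphism, and both sides are local in $\xsa$, so one can argue by devissage on $\mc L$. Using the distinguished triangle $R\Gamma_{[Z]}\mc L\to\mc L\to\mc L[*Z]$ for suitable closed analytic sets $Z$, one reduces first to the smooth case $\mc L=\mathcal{O}_X$ (where both sides are $\omt X$ and the map is the identity) and then to the case of a meromorphic connection $\mathcal{O}_X[*Z]$ with normal crossing $Z$. The latter case is the geometric heart of the argument and is handled by explicit local computations of tempered sections on the complement of a normal crossing divisor, reducing via Hironaka's theorem from an arbitrary analytic $Z$.

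For part (ii), I would factor $f=p\circ i$ where $i\colon X\hookrightarrow X\times Y$ is the graph embedding and $p\colon X\times Y\to Y$ is the projection, and treat the two cases separately. For the smooth projection $p$, the relative tempered de Rham complex provides a concrete resolution of $\drt[X\times Y]{\Dfinv p\mc N}$ in terms of $\drt[Y]{\mc N}$, and the shift $d_X-d_Y$ matches the fiber dimension, so the identification $\drt[X]{\Dfinv p\mc N}[d_X]\simeq p^!(\drt[Y]{\mc N})[d_Y]$ reduces to a direct local computation using that $p^!\simeq p^{-1}[2(d_X-d_Y)]$ on $\com_\xsa$-modules. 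For the closed embedding $i$, one combines Kashiwara's equivalence (Theorem \ref{thm:kashiwara_lemma}) with the compatibility of $\drt$ with $R\Gamma_{[Z]}$ supplied by Proposition \ref{prop:comm_supp_inv_im} and the analogous compatibility of $i^!$ with the $\Gamma_Z$-functor on $\xsa$.

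The main obstacle is the meromorphic case in part (i): one must establish that the tempered sections of $\omt X$ behave correctly with respect to poles on a normal crossing divisor. This is the content that is not purely formal and requires a nontrivial analytic input about tempered extensions across divisors. A secondary technical point in part (ii) is verifying the existence and good behaviour of $f^!$ on the subanalytic site $\xsa$, and its compatibility with the relative tempered de Rham complex, which must be established at the level of the six-functor formalism on ind-sheaves as developed in \cite{ks_indsheaves}.
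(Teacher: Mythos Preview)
This theorem is not proved in the paper at all: it is quoted verbatim as a result of Kashiwara--Schapira (the citation in the heading is to \cite{ks_indsheaves}, Theorems 7.4.1 and 7.4.12), and the paper uses it as a black box. There is therefore no proof in the paper to compare your proposal against.

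That said, your sketch is a reasonable outline of how the argument in \cite{ks_indsheaves} actually goes. For part (i) the d\'evissage to $\mathcal{O}_X$ and to $\mathcal{O}_X[*Z]$ with $Z$ normal crossing is indeed the skeleton of their proof, and you have correctly identified the analytic heart of the matter: one needs the tempered analogue of Grothendieck's comparison theorem, i.e.\ that $\RH[\com_\xsa]{\com_{X\setminus Z}}{\omt X}\simeq\omt X\ou{\rho_!\mathcal{O}_X}\otimes\rho_!\mathcal{O}_X[*Z]$, which is a genuine estimate on the growth of tempered distributions near a normal crossing divisor. For part (ii) the graph factorisation is again the standard route; the delicate point, as you note, is that $f^!$ on the subanalytic site is not defined by an elementary adjunction but via the ind-sheaf formalism, and the compatibility of the relative tempered de Rham complex with this $f^!$ requires care. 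Your proposal names the right obstacles but does not resolve them; for a complete argument you would need to supply the two analytic inputs you flag at the end, and for those the reference is \cite{ks_indsheaves} itself.
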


We are now going to recall a conjecture of M. Kashiwara and
P. Schapira on constructibility of tempered holomorphic solutions of
holonomic $\D$-modules and the results we obtained on curves.

For sake of shortness, we set
$$ \solt(\M):=\RH[\rho_!\D_X]{\rho_!\mc M}{\ot}\in\bdc{\com_\xsa} \ .$$

\begin{conj}[\cite{ks_micro_indsheaves}]\label{conj:constr} Let
  $\mc M\in\bdc[h]{\D_X}$. Then $\solt(\M)\in\bdc{\com_\xsa}$ is $\rea$-constructible.
\end{conj}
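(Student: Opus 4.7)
The plan is to upgrade the $\rea$-preconstructibility result (already the core content of this paper) into full $\rea$-constructibility by exhibiting, locally on $X$, a subanalytic stratification $\{S_\alpha\}$ such that the cohomology sheaves of $\solt(\M)$ restrict to locally constant sheaves on each $\rho_!(\com_{S_\alpha})$-module piece. Preconstructibility gives the finite-dimensionality needed; what remains is the topological invariance of stalks along strata, which is the genuinely new ingredient.

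I would first run the standard dévissage. Using the distinguished triangle $R\Gamma_{[Z]}\M\to\M\to R\Gamma_{[X\setminus Z]}\M\overset{+1}{\to}$ for an appropriate closed analytic $Z\subset X$ (chosen to contain $S(\M)$ and eventual singular loci), and using Kashiwara's equivalence (Theorem \ref{thm:kashiwara_lemma}) together with the induction on dimension already used in the preconstructibility proof, the problem reduces to two cases: (a) $\M$ holonomic with support in a strict closed analytic subset $Z$, handled by induction after desingularising $Z$ and applying Theorem \ref{thm:indsheaves}(ii); and (b) $\M$ a flat meromorphic connection on $X\setminus D$ with poles along a hypersurface $D$. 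Case (a) would then be closed by the inductive hypothesis applied to a resolution $\pi:\widetilde Z\to Z$ of singularities and the adjunction between $\rho_!$, $\rho_*$, $\rho^{-1}$.

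Case (b) is the essential one. I would invoke the theorems of Kedlaya and Mochizuki to obtain, locally on $X$, a proper modification $\pi:\widetilde X\to X$ (a composition of blow-ups) such that $\pi^{-1}(D)$ is a normal crossing divisor $\widetilde D$ and $\pi^+\M$ admits a good formal decomposition along $\widetilde D$. After further ramified base change, one can write the formalisation of $\pi^+\M$ as a direct sum of elementary models $\mc L_\phi\otimes\mc R_\phi$, with $\phi$ meromorphic along $\widetilde D$ and $\mc R_\phi$ regular. Using Theorem \ref{thm:indsheaves}(ii), $\rea$-constructibility of $\solt(\pi^+\M)$ would imply $\rea$-constructibility of $\solt(\M)$, because $\rea$-constructible sheaves on $\widetilde X_{sa}$ are preserved by $R\pi_*$ (properness of $\pi$) and because a standard Stokes gluing argument identifies $\solt(\M)$ with the descent of $\solt$ of the elementary decomposition along a finite cover of multisectors.

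For an elementary model $(\mc L_\phi,\nabla)$, the tempered holomorphic solutions on a subanalytic open $U$ are the multivalued branches of $e^{-\phi}$ that are tempered on $U$, and temperedness is controlled by the sign of $\re(\phi)$ together with the rate of $|\phi|$ near $\widetilde D$. The subanalytic loci $\{\re(\phi)\le c|\phi|^{1-\epsilon}\}$ and their boundaries form a finite subanalytic Stokes stratification of a neighbourhood of $\widetilde D$, on whose strata the cohomology sheaves of $\mrm{DR}^{\dbt}_{\mathcal{O}_{\widetilde X}}(\mc L_\phi\otimes\mc R_\phi)$ should be locally constant: regularity of $\mc R_\phi$ handles the tangential directions via Theorem \ref{thm:indsheaves}(i) applied in the regular case, and the sign of $\re\phi$ is constant on each stratum by construction. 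The main obstacle, and the reason I only expect to recover preconstructibility cleanly within the current framework, is precisely this last step: verifying that the tempered Stokes sheaf is genuinely locally constant (not just finite-dimensional at each point) along the subanalytic Stokes strata requires a quantitative isomorphism between tempered solutions on nested subanalytic neighbourhoods, which in turn needs a subanalytic refinement of Sibuya's asymptotic existence theorem that does not seem to follow from the pull-back formula and the $1$-dimensional results alone.
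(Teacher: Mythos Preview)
The statement you are attempting to prove is a \emph{conjecture} in the paper, not a theorem: the paper does not prove it. The paper's main result is only the weaker $\rea$-\emph{preconstructibility} of $\drt[X]{\mc M}$ (equivalently of $\solt(\mc M)$), and the conjecture of full $\rea$-constructibility is left open in dimension $\geq 2$ (it is known only on curves, Theorem~\ref{thm:r-c_dim1}). So there is no ``paper's own proof'' of this statement to compare your proposal against.

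That said, your proposal is lucid about where the difficulty lies, and your final paragraph identifies precisely the obstruction that prevents the paper from going beyond preconstructibility. The d\'evissage you outline in cases (a) and (b), the reduction via Kedlaya--Mochizuki--Sabbah to elementary models after blow-up and ramification, and the use of Theorem~\ref{thm:indsheaves}(ii) to transport the problem along proper modifications: all of this mirrors exactly the architecture of the paper's proof of preconstructibility (Propositions~\ref{prop:finiteness_support} and~\ref{prop:finiteness_connections}, Lemmas~\ref{lem:finiteness_good_models}--\ref{lem:finiteness_blow-up}). The paper, like you, reduces to elementary models $\mc L^\phi\otimes\mc R$ and handles the regular factor via Theorem~\ref{thm:indsheaves}(i). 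Where the paper stops is exactly where you stop: it obtains finite-dimensionality of $R^j\mathrm{Hom}(\com_U,\drt[X]{\mc M})$ for each relatively compact subanalytic $U$, but does not establish local constancy of $\RH[\com_X]{G}{\drt[X]{\mc M}}$ along a stratification. Your diagnosis that this would require a subanalytic refinement of asymptotic existence (controlling tempered solutions uniformly on nested subanalytic opens, not just on multisectors) is exactly the missing ingredient, and the paper does not supply it.

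In short: your proposal is not a proof of the conjecture, and you correctly recognise this; but neither is there one in the paper. Your outline is essentially a faithful sketch of the paper's preconstructibility argument together with an honest statement of why it does not close the gap to constructibility.
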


In \cite{morando_existence_theorem}, we proved that Conjecture
\ref{conj:constr} is true on analytic curves.

\begin{thm}\label{thm:r-c_dim1}
Let $X$ be a complex curve and $\M\in\bdc[h]{\D_X}$. Then, 
$\solt(\mc M)$ is $\rea$-constructible.
\end{thm}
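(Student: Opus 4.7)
The plan is to work locally on the curve $X$ and exploit the one-dimensional theory of formal decompositions for meromorphic connections. Since $\M\in\bdc[h]{\D_X}$ on a curve, the singular support $S(\M)$ is a discrete subset. Away from $S(\M)$, $\M$ is a flat connection on a holomorphic vector bundle, so $\solt(\M)$ is concentrated in one degree and is a local system pushed to $\xsa$ via $\rho_!$, hence $\rea$-constructible. The question is therefore local near a singular point $x_0$, in a disk $D$ with coordinate $t$. Using the distinguished triangle
$$R\Gamma_{[\{x_0\}]}\M\lra\M\lra\M[*x_0]\overset{+1}{\lra}$$
of Theorem 1.2.6 (ii), one splits $\M$ into a piece supported at $\{x_0\}$ and a meromorphic connection $\M[*x_0]$. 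The first piece corresponds, via Kashiwara's equivalence (Theorem \ref{thm:kashiwara_lemma}), to a complex of finite-dimensional vector spaces at $\{x_0\}$; its tempered solutions form a skyscraper complex with finite-dimensional stalks, which is clearly $\rea$-constructible. Thus one reduces to $\M=\M[*x_0]$.

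For such $\M$, invoke the Levelt--Turrittin theorem: there exists an integer $p\in\natgz$ such that, setting $\pi:D'\to D$, $\pi(s)=s^p$, the formalization of $\dfinv\pi\M$ at $0$ decomposes as $\bigoplus_{i}(\mc E^{\phi_i}\otimes\mc R_i)$ with $\phi_i\in s^{-1}\com[s^{-1}]$ and $\mc R_i$ regular holonomic. By Sibuya--Malgrange, this formal decomposition lifts to an actual asymptotic decomposition on each sufficiently small sector at $0$. Since $\pi$ is finite and étale away from $0$, the compatibility of $\solt$ with pullback (Theorem \ref{thm:indsheaves}(ii)) and a standard Galois descent argument reduce the problem to proving $\rea$-constructibility of $\solt$ for each elementary model $\mc E^{\phi}\otimes\mc R$.

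For such an elementary factor, a local multivalued solution of $\mc E^{\phi}\otimes\mc R$ is of the form $e^{-\phi}u$ where $u$ is a solution of the regular piece $\mc R$ (so $u$ has moderate growth). Such a solution is tempered on a subanalytic open set $U\subset D$ admitting $0$ in its closure if and only if $\re(-\phi)\leq C(1+|\log|t||)$ on $U$ for some $C>0$. The locus where this growth condition is satisfied is governed by the \emph{Stokes rays} of $\phi$, i.e., the finitely many directions where $\re\phi$ changes sign; these cut a small punctured disk into finitely many open subanalytic sectors. On each sector $\solt(\mc E^{\phi}\otimes\mc R)$ is locally constant with finite-dimensional stalks (the tempered regular solutions on a punctured sector form such a local system, as in Theorem \ref{thm:r-c_dim1}'s regular case). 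Assembling these pieces yields $\rea$-constructibility.

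The principal difficulty will be establishing a dictionary between the classical asymptotic framework, where one controls growth on honest sectors $\{|\arg t-\theta_0|<\epsilon\}$, and the present subanalytic framework, where tempered growth is imposed on arbitrary subanalytic $U\in\opxsac$. The key observation is that any subanalytic open subset of $D$ with $0\in\overline U$ can, by the curve selection lemma and Łojasiewicz inequalities, be squeezed between two genuine sectors with the same set of limiting directions at $0$; combined with the explicit exponential estimate $|e^{-\phi(t)}|=e^{\re(-\phi(t))}$ and a Whitney-type cut-off for tempered distributions, this shows the growth condition depends only on the Stokes data. Testing against an arbitrary $G\in\bdc[\rea-c]{\com_X}$ then reduces, by $\rea$-constructible stratification of $G$, to the combinatorics of the Stokes partition, giving the desired $\rea$-constructibility of $\solt(\M)$.
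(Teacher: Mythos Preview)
The paper does not actually prove this theorem: it is quoted from the author's earlier work \cite{morando_existence_theorem} and is used here only as an input (the base case of the induction in Section~2). So there is no ``paper's own proof'' to compare against beyond the one-line remark in the Introduction that in dimension~$1$ the author ``used this method'' of estimating growth of solutions on arbitrary subanalytic open sets. Your outline is broadly in the spirit of that remark: reduce to a local problem at a singular point, split off the torsion part by Kashiwara's lemma, apply Levelt--Turrittin plus the Hukuhara--Sibuya--Malgrange asymptotic lifting to reduce to elementary models $\mc L^\phi\otimes\mc R$, and then analyse temperedness of $e^{-\phi}u$ via Stokes directions.

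That said, two points in your sketch would not survive as written. First, the temperedness criterion you state is wrong: for a holomorphic function on $U\in\opxsac$ to define a section of $\ot_\xsa(U)$ the relevant bound is polynomial in $1/\dist{x}{\partial U}$, not in $1/|t|$ (and certainly not the logarithmic bound you wrote). For a sector whose boundary rays are transverse to the Stokes directions these two distances are comparable by \L ojasiewicz, but for a subanalytic $U$ with a cusp approaching $0$, or one whose boundary is tangent to a Stokes ray, the discrepancy is exactly where the work lies. Second, the assertion that an arbitrary subanalytic $U$ with $0\in\overline U$ ``can be squeezed between two genuine sectors with the same set of limiting directions'' is false as stated (think of an annulus together with a cusp touching $0$, or a region tangent to a Stokes ray); what is true, and what the \L ojasiewicz inequalities actually give, is a polynomial comparison between $\dist{x}{\partial U}$ and the distances to finitely many real-analytic arcs. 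Finally, you end with a sectorwise ``locally constant with finite-dimensional stalks'' description, but $\rea$-constructibility on $\xsa$ is the sheaf-theoretic condition that $\RH[\com_X]{G}{\solt(\M)}$ be $\rea$-constructible for every $G\in\bdc[\rea-c]{\com_X}$; you still have to explain how the sectorial description yields this, which in \cite{morando_existence_theorem} requires a genuine computation rather than a combinatorial remark.
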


Let $\mc M\in\dbdx[coh]$. Set
$$ \mathbb D_X:=\mc Ext^{d_X}_{\D_X}(\mc M,\D_X\otimes_{\mathcal{O}_X}\Omega_X^{\otimes-1}) \ . $$

\begin{thm}
  \begin{enumerate}
  \item The functor $\mathbb D_X:\dmod[h]X^{op}\to\dmod[h]X$
    is an equivalence of categories.
  \item Let $\mc M\in\dbdx [h]$. Then,
$$  \solt(\mc M)\simeq\drt[X]{(\mathbb D_X\mc M)}  \ .$$
\end{enumerate}  
\end{thm}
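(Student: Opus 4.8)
The plan is to prove the two statements essentially by reduction to known duality facts, but with the caveat that they are stated for holonomic complexes. For part (i), the claim that $\mathbb D_X$ is an involutive equivalence on $\dbdx[h]$ is the classical duality theorem for holonomic $\D$-modules. I would recall that $\mathbb D_X\mc M = R\mc Hom_{\D_X}(\mc M,\D_X\otimes_{\mathcal{O}_X}\Omega_X^{\otimes -1})[d_X]$ (up to the shift convention in the displayed formula) preserves holonomicity — this follows from the fact that $\mc Ext^j_{\D_X}(\mc M,\D_X)$ vanishes for $j\neq d_X$ when $\mc M$ is holonomic, together with the characterization of holonomicity via the characteristic variety, which is preserved under duality. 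That $\mathbb D_X\circ\mathbb D_X\simeq \mathrm{id}$ is the biduality isomorphism, obtained from the biduality for the underlying $\mathcal O_X$-coherent complexes and a local free resolution of $\mc M$ by induced $\D_X$-modules. So part (i) is purely a citation of the standard theory (Kashiwara, Björk), and I would simply point to \cite{kashiwara_dmod} or \cite{bjork}.

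For part (ii), the heart of the matter, I would proceed as follows. First reduce to the case $\mc M\in\dmod[h]X$ by dévissage on the (bounded) complex. Then, by Theorem \ref{thm:indsheaves}(i), if $\mc N$ were regular holonomic one would have $\drt[X]{\mc N}\simeq\RH[\com_\xsa]{\RH[\D_X]{\mc N}{\mathcal{O}_X}}{\omt X}$; the point is to establish the analogous tempered statement without regularity. The key input is the formula, valid in $\bdc{\com_\xsa}$, expressing $\solt$ and $\drt$ as adjoint-dual partners: morally, for $\mc M$ holonomic,
$$
\solt(\mc M)=\RH[\rho_!\D_X]{\rho_!\mc M}{\ot}\simeq \omt X\ou[L]{\rho_!\D_X}\otimes\rho_!(\mathbb D_X\mc M)=\drt[X]{(\mathbb D_X\mc M)}\ ,
$$
and this is proved by writing $\ot=\RH[\rho_!\D_{\overline X}]{\rho_!\mathcal{O}_{\overline X}}{\dbt_\xsa}$, taking a local presentation of $\mc M$ by a complex of free $\D_X$-modules of finite rank, and commuting $\RH[\rho_!\D_X]{-}{\ot}$ with finite limits. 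Over the free module $\D_X$ itself one has $\RH[\rho_!\D_X]{\rho_!\D_X}{\ot}\simeq\ot$, and the transpose of the differentials in the free resolution of $\mc M$ is, by definition, the free resolution computing $\mathbb D_X\mc M\overset{D}\otimes$, so applying $\omt X\otimes^L_{\rho_!\D_X}\rho_!(-)$ to $\mathbb D_X\mc M$ yields the same complex. This is the side-stepping argument: one uses Proposition \ref{prop:tensor_commutation} to rewrite $\drt[X]{(\mathbb D_X\mc M)}$ as $\omt X\otimes^L_{\rho_!\D_X}\rho_!\mathbb D_X\mc M$ and then matches the two presentations term by term.

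The main obstacle, and the step to handle with care, is the passage from the local free-resolution identification to a global isomorphism in $\bdc{\com_\xsa}$: one must check that the candidate morphism $\solt(\mc M)\to\drt[X]{(\mathbb D_X\mc M)}$ is \emph{canonical} (independent of the chosen local resolution) so that the local isomorphisms glue. The natural way is to exhibit the morphism functorially, e.g. via the evaluation/trace pairing $\rho_!\mc M\otimes^L_{\rho_!\D_X}(\omt X\otimes^L_{\rho_!\D_X}\rho_!\mathbb D_X\mc M)\to\omt X$ coming from the definition of $\mathbb D_X$, together with the fact that $\omt X$ is concentrated in degree $d_X$ with the requisite flatness (resp. Spencer-type resolution) properties over $\rho_!\D_X$ that make these derived tensor products well-behaved; here one uses that $\mrm{DR}^{\dbt}_{\mathcal{O}_X}$ and $\mrm{DR}^{\dbt}_{\D_X}$ agree up to shift, i.e. \eqref{eq:drodbt_drddbt}. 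Once naturality is secured, the isomorphism can be checked stalkwise — or rather after applying $\rho^{-1}$ and passing to an appropriate local model — reducing to the classical identity $\RH[\D_X]{\mc M}{\mathcal{O}_X}\simeq\mrm{DR}_{\mathcal{O}_X}(\mathbb D_X\mc M)$ whose tempered refinement is exactly what the formalism of \cite{ks_indsheaves} was built to provide. I would close by remarking that no regularity hypothesis enters, since we never invoke Theorem \ref{thm:indsheaves}(i) directly but only the module-theoretic duality together with the definition of $\ot$.
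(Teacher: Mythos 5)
The paper does not prove this theorem --- it appears in the review section as a recall of known facts, (i) being the classical holonomic biduality (Kashiwara \cite{kashiwara_dmod}, Bj\"ork \cite{bjork}) and (ii) being a result of Kashiwara--Schapira from \cite{ks_indsheaves} --- so there is no in-paper proof to compare against.

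Taken on its own merits, your proposal for (ii) is the standard argument and is essentially correct. The cleanest formulation of what you are doing is the perfect-complex identity
$$\RH[\rho_!\D_X]{\rho_!\mc M}{\ot}\simeq\RH[\rho_!\D_X]{\rho_!\mc M}{\rho_!\D_X}\ou[L]{\rho_!\D_X}\otimes\ot\ ,$$
valid because $\mc M$ is locally perfect over $\D_X$ and $\rho_!$, being exact and fully faithful, sends a bounded free resolution of $\mc M$ to a bounded free resolution of $\rho_!\mc M$ over $\rho_!\D_X$; one then identifies $\RH[\D_X]{\mc M}{\D_X}$ with $\Omega_X\ou{\mathcal{O}_X}\otimes\mathbb D_X\mc M[-d_X]$ by the side-switching built into the definition of $\mathbb D_X$, and absorbs the $\Omega_X$ into $\omt X=\rho_!\Omega_X\ou{\rho_!\mathcal{O}_X}\otimes\otxsa$ to produce $\drt[X]{(\mathbb D_X\mc M)}$. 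Both the local free-resolution computation and the remark that the resulting isomorphism is functorial in $\mc M$ --- hence the local isomorphisms glue --- are right, and the observation that regularity plays no role is also right. Two imprecisions worth flagging: (a) the assertion that $\omt X$ is ``concentrated in degree $d_X$'' is not accurate (it is $\otxsa$ that has cohomology only in degree $0$, by the tempered Dolbeault lemma of \cite{ks_indsheaves}), and in any case your argument does not need any concentration statement for $\ot$ --- what it needs is that the terms of the chosen resolution are free, so that the derived tensor is computed termwise; (b) the place where holonomicity, as opposed to mere coherence, enters is part (i): the vanishing of $\mc Ext^j_{\D_X}(\mc M,\D_X)$ for $j\neq d_X$ is exactly what makes the displayed $\mc Ext^{d_X}$ formula a well-defined exact functor $\dmod[h]X^{op}\to\dmod[h]X$, and you should state this explicitly rather than leave it implicit in the citation.
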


\subsection{Elementary asymptotic decomposition of meromorphic connections}

In this subsection we are going to recall some fundamental results on
the asymptotic decomposition of meromorphic connections. The first
results on this subject were obtained by H. Majima (\cite{majima_lnm})
and C. Sabbah (\cite{sabbah_aif}). In particular, Sabbah proved that
any meromorphic connection admitting a \emph{good} formal decomposition
admits an asymptotic decompositions on small multisectors. Let us
recall that Sabbah conjectured that any meromorphic connection admits
a \emph{good} formal decomposition after a finite number of pointwise
blow-up and ramification maps (\cite{sabbah_ast}). Such a conjecture
was proved in the algebraic case by T. Mochizuki
(\cite{mochizuki1,mochizuki2}) and in the analytic case by K. Kedlaya
(\cite{kedlaya1,kedlaya2}). Let us also stress the fact that, in the
works of Sabbah, Mochizuki and Kedlaya, there is an essential
\emph{goodness} property of formal invariants which plays a central
role in the study of the formal and asymptotic decompositions.  As in
this paper we are not concerned with the formal decomposition of
meromorphic connections and since the goodness property is not needed
within the scope of our results, we are not going to give details on
them. In this way, we will avoid to go into technicalities unessential
in the rest of the paper. For this subsection we refer to
\cite{sabbah_ast,mochizuki2,hien_periods_manifolds}.

Let us start by recalling some results about integrable connections on an
analytic manifold $X$ of dimension $n$ with meromorphic poles on a
divisor $Z$. As in the rest of the paper we will just need the case
where $Z$ is a normal crossing hypersurface, from now on, we will suppose
such hypothesis.

Let $\Omega^j_X$ be the sheaf of $j$-forms on $X$. Let $\mc M$ be a
finitely generated $\mathcal{O}[*Z]$-module endowed with a $\com_X$-linear morphism
$\nabla:\mc M\to\Omega^1_X\otimes_{\mathcal{O}_X}\mc M$ satisfying the Leibniz
rule, that is to say, for any $h\in\mc O[*Z]$, $m\in\mc M$,
$\nabla(hm)=dh\otimes m+h\nabla m$. The morphism $\nabla$ induces
$\com_X$-linear morphisms $\nabla^{(j)}:\Omega^j_X\otimes_{\mathcal{O}_X}\mc
M\to \Omega^{j+1}_X\otimes_{\mathcal{O}_X}\mc M$.

\begin{df}
  A \emph{meromorphic flat connection on $X$ with poles along $Z$ }
is a locally free $\mathcal{O}[*Z]$-module
  of finite type $\mc M$ endowed with a $\com_X$-linear morphism
  $\nabla$ as above such that $\nabla^{(1)}\circ\nabla=0$.
\end{df}

For sake of shortness, in the rest of the paper, we will drop the
adjective ``\emph{flat}''. If there is no risk of confusion, given a
meromorphic connection $(\mc M,\nabla)$, we will simply denote it by
$\mc M$.

Let $\mc M_1, \mc M_2$ be two coherent $\mathcal{O}_X[*Z]$-modules, a morphism
$\phi:\M_1\to\M_2$ induces a morphism
$\phi':\Omega^1_X\otimes_{\mathcal{O}_X}\mc M_1\to\Omega^1_X\otimes_{\mathcal{O}_X}\mc
M_2$. A \emph{morphism of meromorphic connections} $(\mc
M_1,\nabla_1)\to(\mc M_2,\nabla_2)$ is given by a morphism
$\phi:\M_1\to\M_2$ of coherent $\mathcal{O}_X[*Z]$-modules such that
$\phi'\circ\nabla_1=\nabla_2\circ\phi$. We denote by $\mfr M(X,Z)$
the \emph{category of meromorphic connections with poles along} $Z$.

It is well known (see \cite{bjork}) that, if $Z$ is a normal crossing
hypersurface, the image through the functor
$\cdot\overset{D}\otimes\mathcal{O}[*Z]$ of the full subcategory of $\dmod[h]X$
of objects with singular support contained in $Z$, is equivalent to
the category of meromorphic connections with poles along $Z$. In
particular, if $\mc M\in\dmod[h]{X}$ satisfies $S(\mc M)$ is a normal
crossing hypersurface and $\mc M\simeq R\Gamma_{[X\setminus S(\mc
  M)]}\mc M\simeq\mc M\Dotimes\mathcal{O}[*S(\mc M)]$, then the morphism
$\nabla^{(0)}:\mc M\lra\Omega^1_X\ou{\mathcal{O}_X}\otimes\mc M$, defined in
\eqref{eq:dro}, gives rise to a meromorphic connection. A meromorphic
connection is said \emph{regular} if it is regular as a
$\D$-module. Furthermore the tensor product in $\mfr M(X,Z)$ is well
defined and it coincides with the tensor product of $\D$-modules. With
an abuse of language, given a holonomic $\D_X$-module $\mc M$ with
singular support contained in $Z$, we will call $\mc M$ a meromorphic
connection if $\mc M\simeq R\Gamma_{[X\setminus Z]}\mc M\simeq \mc
M\Dotimes\mathcal{O}_X[*Z]$.

Let us recall that, if $(\mc M,\nabla)\in\mfr M(X,Z)$ and $\mc M$ is
an $\mathcal{O}_X[*Z]$-module of rank $r$, then, in a given basis of local
sections of $\mc M$, we can write $\nabla$ as $d-A$ where $A$ is a
$r\times r$ matrix with entries in
$\Omega^1_X\otimes_{\mathcal{O}_X}\mathcal{O}_X[*Z]$. Now, let $X'$ be a complex
manifold and $f:X'\to X$ a holomorphic map. Let us suppose that
$Z':=f^{-1}(Z)$ has codimension $1$ everywhere in $X'$. Then, we can
define the inverse image $f^*\mc M$ of $\mc M$ on $X'$ with poles
along $Z'$. As an $\mathcal{O}_{X'}[*Z']$-module, it is $f^{-1}\mc M$ and the
matrix of the connection in a local base is $f^*A$. With a harmless
abuse of notation, we will write $\dfinv f\mc M$ for $f^*\mc M$.


Let us now introduce the elementary asymptotic decompositions.

Let us denote by $\widetilde X$ the real oriented blow-up of the
irreducible components of $Z$ and by $\pi:\widetilde X\to X$ the
composition of all these. Let us suppose that $Z$ is locally defined
by $x_1\cdot\ldots\cdot x_k=0$. Then, locally $\widetilde X\simeq
(S^1\times\rea_{\geq0})^k\times\com^{n-k}$. By a \emph{multisector} we
mean a set of the form $\ou[k]{j=1}\prod(I_j\times V_j)\times W$ where
$I_j\subset S^1$ is an open connected set, $V_j=[0,r_j[$ ($r_j>0$) and
$W\subset \com^{n-k}$ is an open polydisc. Let $\overline x_1, \ldots,
\overline x_n$ denote the antiholomorphic coordinates on $X$. Then
$\partial_{\overline x_j}$ acts on $\mc C^\infty_{\widetilde
  X}$. The sheaf on $\tilde X(Z)$ of holomorphic functions with
asymptotic development on $Z$, denoted $\mc A_{\widetilde X}$, is
defined as
$$ \mc A_{\widetilde X}:=\ou[k]{j=1}\bigcap\mrm{ker}\Big(\overline x_j\partial _{\overline x_j}:\mc C^\infty_{\widetilde
  X(D)}\to\mc C^\infty_{\widetilde
  X(D)}\Big)\cap\ou[n]{j=k+1}\bigcap \mrm{ker}\Big(\partial _{\overline x_j}:\mc C^\infty_{\widetilde
  X(D)}\to\mc C^\infty_{\widetilde
  X(D)}\Big)   \ .$$
The sections of $\mc A_{\widetilde X}$ are holomorphic functions on
$\widetilde X$ which admit an asymptotic development in the sense of
\cite{majima_lnm} (see also \cite{sabbah_ast}).

Given $\mc M\in\mfr M(X,Z)$, we set $\mc M_{\widetilde X}:=\mc
A_{\widetilde X}\ou{\pi^{-1}\mathcal{O}_X}\otimes\pi^{-1}\mc M$.

\begin{df}\label{df:elementary_model}
  \begin{enumerate}
\item Let $\phi$ be a local section of $\mathcal{O}_X[*Z]/\mathcal{O}_X$, we denote by
  $\mc L^\phi$ the meromorphic connection of rank $1$ whose matrix in
  a basis is $d\phi$.
  \item An \emph{elementary local model} $\mc M$ is a meromorphic
    connection isomorphic to a direct sum
$$\ou{\alpha\in A}\oplus\mc L^{\phi_\alpha}\otimes\mc R_\alpha  \ , $$
where $A$ is a finite set, $(\phi_\alpha)_{\alpha\in A}$ is a family
of local sections of $\mathcal{O}_X[*Z]/\mathcal{O}_X$ and $(\mc R_\alpha)_{\alpha\in
  A}$ is a family of regular meromorphic connections.
\item We say that $(\mc M,\nabla)\in\mfr M(X,Z)$ admits an elementary
  $\mc A$-decomposition if for any $\theta\in\pi^{-1}(Z)$ there exist
  an elementary local model $(\mc M^{el},\nabla^{el})$ and an
  isomorphism
$$  (\mc M_{\widetilde X,\theta},\nabla)\simeq(\mc M^{el}_{\widetilde X,\theta},\nabla^{el})  \ .$$
In particular, if $\mrm{rk}\,\mc M=r$, for any $\theta\in\pi^{-1}(Z)$, there exists $Y_\theta\in\mrm Gl(r,\mc
  A_\theta)$ such that the following diagram commutes
    $$\xymatrix{
      0\ar[r] &\mc M_{\widetilde {X},\theta}
      \ar[r]^{\nabla\phantom{abcde}}\ar[d]^{Y_\theta\cdot}& \mc
      M_{\widetilde {X},\theta}\ou{\pi^{-1}\mathcal{O}_X}\otimes
      \Omega^1_{\widetilde{X},\theta}\ar[r]^{\phantom{}\nabla}\ar[d]^{Y_\theta\cdot}&
      \mc M_{\widetilde {X},\theta}\ou{\pi^{-1}\mathcal{O}_X}\otimes
      \Omega^2_{\widetilde{X},\theta}\ar[r]^{\phantom{habcde}\nabla}\ar[d]^{Y_\theta\cdot}&
      \ldots\\ 
      0\ar[r] &\mc M^{el}_{\widetilde {X},\theta}
      \ar[r]^{\nabla^{el}\phantom{abcde}}& \mc M^{el}_{\widetilde
        {X},\theta}\ou{\pi^{-1}\mathcal{O}_X}\otimes
      \Omega^1_{\widetilde{X},\theta}\ar[r]^{\phantom{}\nabla^{el}}&\mc
      M^{el}_{\widetilde {X},\theta}\ou{\pi^{-1}\mathcal{O}_X}\otimes
      \Omega^2_{\widetilde{X},\theta}\ar[r]^{\phantom{aseabcde}\nabla^{el}}&
      \ldots \ .\\ 
    }$$
\end{enumerate}
\end{df}

Let us now choose local coordinates such that $Z$ is defined by the
equation $x_1\cdot\ldots\cdot x_k=0$. 
 By a \emph{ramification map fixing} $Z$ we mean a map
\begin{equation}\label{eq:ramification}
      \begin{array}{rccl}
        \rho_l: & \com^n & \lra & \com^n \\
        & (t_1\ldots,t_n) & \longmapsto & (t_1^l\ldots,t_k^l,t_{k+1}\ldots,t_n) \ ,
      \end{array}
    \end{equation}
    for some $l\in\integergz$.

    The proof of Theorem \ref{thm:asymptotic_lift} below is based on
    deep results of T. Mochizuki (\cite{mochizuki1, mochizuki2}),
    K. Kedlaya (\cite{kedlaya1,kedlaya2}) and C. Sabbah
    (\cite{sabbah_ast, sabbah_aif}), see also
    \cite{hien_periods,hien_periods_manifolds} for a concise
    exposition.

\begin{thm}\label{thm:asymptotic_lift}
  Let $(\mc M, \nabla)\in\mfr M(X,Z)$. For any $x_0\in Z$ there exist
  a neighbourhood $W$ of $x_0$ and a finite sequence of pointwise blow-ups
  above $x_0$, $\sigma:Y\to X$ such that $\sigma^{-1}(Z)$ is a normal
  crossing divisor and there exists a ramification map $\eta:X'\to Y$
  fixing $\sigma^{-1}(Z)$ such that $\dfinv{(\eta\circ\sigma)}\mc
  M|_W$ admits an elementary $\mc A$-decomposition.


\end{thm}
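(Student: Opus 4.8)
The plan is to prove Theorem \ref{thm:asymptotic_lift} by invoking the cited deep results (Mochizuki, Kedlaya, Sabbah) in the correct order: first obtain a \emph{good formal} decomposition of $\mc M$ after pointwise blow-ups and ramification, and then promote this formal decomposition to an asymptotic ($\mc A$) decomposition on multisectors. The statement to be proved is itself largely a bookkeeping corollary of those results, so the work is in organizing the geometry (which blow-ups, which ramification, on which neighbourhood) rather than in any new analytic estimate.

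First, fix $x_0\in Z$. Since $(\mc M,\nabla)\in\mfr M(X,Z)$ is a meromorphic connection with poles on the normal crossing hypersurface $Z$, the Sabbah conjecture on the existence of good formal structures — proved by Mochizuki in the algebraic case (\cite{mochizuki1,mochizuki2}) and by Kedlaya in the analytic case (\cite{kedlaya1,kedlaya2}) — furnishes, after shrinking to a neighbourhood $W$ of $x_0$, a finite sequence of pointwise blow-ups above $x_0$, say $\sigma:Y\to X$, such that $\sigma^{-1}(Z)$ is a normal crossing divisor, together with a ramification map $\eta:X'\to Y$ fixing $\sigma^{-1}(Z)$, so that $\dfinv{(\eta\circ\sigma)}\mc M|_W$ admits a \emph{good formal decomposition} along $(\eta\circ\sigma)^{-1}(Z)$: formally at each point of the divisor it decomposes as a direct sum of terms $\mc L^{\phi_\alpha}\otimes\mc R_\alpha$ with $\mc R_\alpha$ regular and the $\phi_\alpha$ satisfying the goodness condition. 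The existence of the blow-up sequence and ramification is exactly the content of the cited theorems; one must only check that the composite $\eta\circ\sigma$ still has the property that the preimage of $Z$ is a normal crossing divisor of codimension $1$, which holds because ramification maps fixing a normal crossing divisor preserve this.

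Second, one applies Sabbah's asymptotic lifting theorem (\cite{sabbah_aif}, see also the exposition in \cite{hien_periods,hien_periods_manifolds}): a meromorphic connection admitting a good formal decomposition along a normal crossing divisor admits, on sufficiently small multisectors, an asymptotic decomposition with coefficients in $\mc A_{\widetilde X}$ matching the formal one. Concretely, for each $\theta\in\pi^{-1}((\eta\circ\sigma)^{-1}(Z))$ this produces the elementary local model $(\mc M^{el},\nabla^{el})$ — the direct sum $\bigoplus_{\alpha\in A}\mc L^{\phi_\alpha}\otimes\mc R_\alpha$ of Definition \ref{df:elementary_model} — and a matrix $Y_\theta\in\mrm{Gl}(r,\mc A_\theta)$ realizing the commutative diagram in part (iii) of that definition, i.e.\ an isomorphism $(\mc M_{\widetilde X,\theta},\nabla)\simeq(\mc M^{el}_{\widetilde X,\theta},\nabla^{el})$. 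That $\dfinv{(\eta\circ\sigma)}\mc M|_W$ therefore admits an elementary $\mc A$-decomposition in the sense of Definition \ref{df:elementary_model} is then immediate.

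The main obstacle is not a step one carries out oneself but the correct citation and compatibility of two non-trivial inputs: one must be sure that the good formal decomposition produced by Kedlaya/Mochizuki is in the shape required to feed into Sabbah's asymptotic-lift machinery (same divisor, same goodness convention), and that the successive geometric operations — pointwise blow-ups $\sigma$ followed by a ramification $\eta$ fixing $\sigma^{-1}(Z)$ — compose into a single morphism $\eta\circ\sigma$ for which all hypotheses of the lifting theorem are met on a common neighbourhood $W$. Since in this paper the goodness property plays no further role (as the authors emphasize), one can afford to state the conclusion without the goodness decoration: the elementary $\mc A$-decomposition of Definition \ref{df:elementary_model} is all that is retained, and it follows by forgetting the goodness of the $\phi_\alpha$.
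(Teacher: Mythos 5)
Your proposal is correct and takes essentially the same approach as the paper, which in fact provides no written proof of Theorem \ref{thm:asymptotic_lift} at all --- it simply states the theorem after remarking that it is "based on deep results of Mochizuki, Kedlaya and Sabbah, see also Hien for a concise exposition." Your two-step route (good formal decomposition after blow-ups and ramification via Mochizuki/Kedlaya, then Sabbah's asymptotic lifting on multisectors, finally forgetting the goodness of the $\phi_\alpha$ to land on the elementary $\mc A$-decomposition of Definition \ref{df:elementary_model}) is exactly what the paper's citations implicitly assemble, and you have correctly identified the bookkeeping point that the composite $\eta\circ\sigma$ must preserve the normal-crossing property.
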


\section{$\rea$-preconstructibility of the tempered De Rham complex for
  holonomic  $\D$-modules} 
\sectionmark{Preconstructibility of tempered De Rham complex}

In this section we are going to prove the $\rea$-preconstructibility
of $\drt[X]{\mc M}:=\omt X\ou{\rho_!\D_X}{\otimes}\rho_!\mc M$ for $X$
a complex analytic manifold and $\mc M\in\bdc[h]{\D_X}$. Such a result
is a weaker version of Conjecture \ref{conj:constr} on the
$\rea$-constructibility of $\drt[X]{\mc M}$.

Recall that $F\in\bdc[]{\com_\xsa}$ is said \emph{$\rea$-preconstructible} if
for any $G\in\bdc[\rea-c]{\com_X}$ with compact support and any
$j\in\integer$,
$$ \dim_\com\mrm R^jHom_{\com_X}(G,F)<+\infty\ . $$

The general statement will be given and proved in Subsection
\ref{subsec:general_statement}. Roughly speaking, the proof is based
on an induction process on $\dim\,X$. The first step of the induction
follows from Theorem \ref{thm:r-c_dim1}. In Subsections
\ref{subsec:finiteness_support} and
\ref{subsec:finiteness_connections} we treat particular cases of the
inductive step.

\subsection{The case of modules supported on analytic sets}\label{subsec:finiteness_support}

In this subsection we prove the following

\begin{prop}\label{prop:finiteness_support}
  Let $X$ be a complex analytic manifold, $Z\neq X$ a closed analytic subset
  of $X$. Suppose that for any complex analytic manifold $Y$ with
  $\dim\,Y<\dim\,X$ and for any $\mc N\in\bdc[h]{\D_Y}$, $\drt[Y]{\mc
    N}$ is $\rea$-preconstructible. 
  Then, for any $\mc M\in\dbdx[h]$, $\drt[X]{(R\Gamma_{[Z]}\mc
    M)}$ is $\rea$-preconstructible.
\end{prop}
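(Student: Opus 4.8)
The strategy is to reduce the statement to a $\D$-module $\mc N$ living on a manifold of strictly smaller dimension than $X$, so that the inductive hypothesis applies. The key tool is Kashiwara's equivalence (Theorem \ref{thm:kashiwara_lemma}) together with the compatibility of the tempered De Rham complex with (proper) direct images, which is the dual/adjoint form of Theorem \ref{thm:indsheaves}(ii). First I would reduce to the case where $Z$ is smooth: since $Z$ is a closed analytic subset of $X$ with $Z\neq X$, after taking a stratification of $Z$ by subanalytic (indeed analytic) submanifolds and using the distinguished triangles relating $R\Gamma_{[S_1]}$, $R\Gamma_{[S_2]}$ and $R\Gamma_{[S_1\cap S_2]}$ from Theorem 1.2.16, together with the fact that $\rea$-preconstructibility is stable under taking cones of morphisms in $\bdc{\com_\xsa}$, one is reduced to the case where $S=Z\setminus W$ is the difference of two closed analytic subsets and the smooth locus has been isolated. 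More precisely, I would argue by descending induction on $\dim Z$: pick $W\subsetneq Z$ a closed analytic subset containing $\mathrm{Sing}\,Z$ with $\dim W<\dim Z$, so that $Z\setminus W$ is a locally closed analytic submanifold, and use $R\Gamma_{[Z]}\mc M\to \mc M \to R\Gamma_{[X\setminus Z]}\mc M\overset{+1}{\to}$ reasoning applied on $X\setminus W$ to split off the contribution of $W$ (handled by the inductive step on $\dim Z$) from that of $Z\setminus W$.

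\textbf{The smooth case.} So assume $Z$ is a closed submanifold of $X$ with $\dim Z<\dim X$, and let $i:Z\hookrightarrow X$ denote the inclusion. Set $\mc M':=R\Gamma_{[Z]}\mc M$; this is a holonomic $\D_X$-module (complex) supported on $Z$. By Kashiwara's equivalence there is $\mc N\in\bdc[h]{\D_Z}$ with $\mc M'\simeq \mathrm{D}i_*\mc N$, the $\D$-module direct image. Now I would invoke the commutation of the tempered De Rham functor with proper direct images: for a closed embedding $i$ one has a natural isomorphism in $\bdc{\com_\xsa}$
$$ \drt[X]{(\mathrm D i_*\mc N)}\simeq Ri_{sa\,*}\,\drt[Z]{\mc N}\,, $$
where $i_{sa}$ is the morphism of subanalytic sites induced by $i$. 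This is the form of Theorem \ref{thm:indsheaves}(ii) obtained by applying duality $\mathbb D$ and the adjunction $(\mathrm Df_*,\Dfinv f)$, or can be extracted directly from the projection-formula results of \cite{ks_indsheaves} on $\ot_\xsa$; it is the tempered analogue of the classical fact that $\mathrm{DR}$ commutes with proper pushforward.

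\textbf{Descending to $Z$.} Granting this, for any $G\in\bdc[\rea-c]{\com_X}$ with compact support we compute
$$ \mathrm R\mathrm{Hom}_{\com_X}(G,\drt[X]{\mc M'})\simeq \mathrm R\mathrm{Hom}_{\com_X}(G, Ri_{sa\,*}\drt[Z]{\mc N})\simeq \mathrm R\mathrm{Hom}_{\com_Z}(i^{-1}G,\drt[Z]{\mc N})\,, $$
using adjunction between $i_{sa}^{-1}$ and $Ri_{sa\,*}$ at the level of subanalytic sheaves, and that $i^{-1}$ of an $\rea$-constructible sheaf with compact support on $X$ is again $\rea$-constructible with compact support on $Z$. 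Since $\dim Z<\dim X$, the inductive hypothesis applied to $\mc N$ on $Z$ gives that $\drt[Z]{\mc N}$ is $\rea$-preconstructible, hence the right-hand side has finite-dimensional cohomology in each degree, which is exactly what we need.

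\textbf{Main obstacle.} The routine parts are the dévissage on $\dim Z$ and the stability of $\rea$-preconstructibility under cones and under $i^{-1}$ applied to $\rea$-constructible sheaves. The genuinely delicate point is the commutation isomorphism $\drt[X]{(\mathrm Di_*\mc N)}\simeq Ri_{sa\,*}\drt[Z]{\mc N}$: one must make sure the base-change/pushforward formulas for $\ot_\xsa$ from \cite{ks_indsheaves} apply in the subanalytic setting for the closed embedding $i$ and are compatible with Kashiwara's equivalence, and one must correctly keep track of the shifts by $d_X$ and $d_Z$ appearing in Theorem \ref{thm:indsheaves}(ii) (equivalently, of the orientation/dualizing twists $\omega_{Z/X}$). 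I expect the argument to hinge on deducing this proper-pushforward formula cleanly from the inverse-image formula by duality, rather than on any new analytic estimate.
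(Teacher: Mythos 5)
Your proposal is correct and reaches the same conclusion, but it takes a genuinely different route through the smooth case. The paper never invokes a direct-image formula for $\drt[X]{\cdot}$. Instead, it applies the inverse-image formula of Theorem \ref{thm:indsheaves}(ii) together with the adjunction $(i_{Z!},\,i_Z^!)$ in the subanalytic setting: it writes $R\mathrm{Hom}_{\com_X}(i_{Z!}\com_{Z,V},\,\drt[X]{(R\Gamma_{[Z]}\mc M)})\simeq R\mathrm{Hom}_{\com_Z}(\com_{Z,V},\,i_Z^!\drt[X]{(R\Gamma_{[Z]}\mc M)})$ and then identifies $i_Z^!\drt[X]{(R\Gamma_{[Z]}\mc M)}\simeq\drt[Z]{\mc N}[d_X-d_Z]$ via Kashiwara's equivalence ($\mc N\simeq\Dfinv{i_Z}R\Gamma_{[Z]}\mc M$). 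To make that adjunction usable, the paper first performs a reduction you did not need: using Theorem \ref{thm:indsheaves}(i) applied to the regular holonomic module $R\Gamma_{[Z]}\mathcal{O}_X$ and Proposition \ref{prop:tensor_commutation}, it shows $R\mathrm{Hom}_{\com_X}(\com_U,\drt[X]{(R\Gamma_{[Z]}\mc M)})\simeq R\mathrm{Hom}_{\com_X}(\com_{U\cap Z},\drt[X]{\mc M})$, so that one may assume $G$ is supported on $Z$ and hence of the form $i_{Z!}\com_{Z,V}$. Your version bypasses this reduction by applying $i^{-1}$ directly, at the cost of needing the proper pushforward formula $\drt[X]{(\mathrm Di_*\mc N)}\simeq Ri_*\drt[Z]{\mc N}$ (up to a shift by $d_Z-d_X$): this is true and is in \cite{ks_indsheaves} (the tempered analogue of commutation of De Rham with proper direct image), but it is not among the statements recalled in the paper, so you would need to cite or derive it, whereas the paper's argument uses only what it has already quoted. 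The dévissage from general $Z$ to the smooth case is essentially the same in both arguments: the paper uses the triangle $R\Gamma_{[Z\setminus Z_{reg}]}\mc M\to R\Gamma_{[Z]}\mc M\to R\Gamma_{[Z_{reg}]}\mc M\overset{+1}{\to}$, induction on $\dim Z$ for the singular part, and locality for $Z_{reg}$.
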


Before going into the proof of Proposition
\ref{prop:finiteness_support}, let us recall few facts on complex
analytic subsets of an analytic manifold $X$. Given an analytic set
$Z\subset X$, a point $z\in Z$ is said \emph{regular} if there exists
a neighbourhood $W$ of $z$ such that $Z\cap W$ is a closed submanifold
of $W$. It is well known that the set of regular points of a regular
analytic set $Z\subset X$, denoted $Z_{reg}$, is a dense open subset
of $Z$. Furthermore $Z\setminus Z_{reg}$ is a closed analytic set called
the \emph{singular part of $Z$}.

\begin{proof}
  We want to prove that for any $j>0$, $G\in\bdc[\rea-c]{\com_X}$,
  with compact support,

\begin{equation}  \label{eq:finiteness_support}
 \dim_\com R^j\ho[\com_X]{G}{\drt[X]{(R\Gamma_{[Z]}\mc M)}}<+\infty \ .
\end{equation}
As $\bdc[\rea-c]{\com_X}$ is generated by objects of the form
$\com_U$, for $U\in\Op^c(\xsa)$, it is sufficient to prove
\eqref{eq:finiteness_support} with $G\simeq\com_U$.

We have the following sequence of isomorphisms

\begin{eqnarray*}
  R\ho[\com_X]{\com_U}{\omt X\ou{\rho_!\D_X}\otimes\rho_!R\Gamma_{[Z]}\mc M}  &  \simeq  &  R\ho[\com_X]{\com_U}{\omt X\ou{\rho_!\D_X}\otimes(\rho_!\mc M\ou{\rho_!\mathcal{O}_X}\otimes\rho_! R\Gamma_{[Z]}\mathcal{O}_X)} \\
&  \simeq  &  R\ho[\com_X]{\com_U}{(\omt X\ou{\rho_!\mathcal{O}_X}\otimes\rho_! R\Gamma_{[Z]}\mathcal{O}_X)\ou{\rho_!\D_X}\otimes\rho_!\mc M}  \\
&  \simeq  &  R\ho[\com_X]{\com_U}{R\Ho[\com_\xsa]{\com_Z}{\omt X}\ou{\rho_!\D_X}\otimes\rho_!\mc M}  \\
&  \simeq  &  R\ho[\com_X]{\com_U\ou{\com_X}\otimes \com_Z}{\drt [X]{\mc M}}  \\
&  \simeq  &  R\ho[\com_X]{\com_{U\cap Z}}{\drt [X]{\mc M}} \ . \\
\end{eqnarray*}

We have used Proposition \ref{prop:tensor_commutation} in the
second isomorphism and Theorem \ref{thm:indsheaves}(i) in the third
isomorphism.

In particular, it is sufficient to prove \eqref{eq:finiteness_support}
with $G\simeq \com_{U\cap Z}$.

Let us now suppose that $Z\neq X$ is a closed submanifold of $X$. Set $d_Z:=\dim Z$ and $d_X:=\dim X$ Let us
denote by $i_Z:Z\to X$ the inclusion. By Theorem
\ref{thm:kashiwara_lemma}, there exists $\mc N\in\bdc[h]{\D_Z}$ such
that $\Dfinv {i_Z} R\Gamma_{[Z]}\mc M\simeq\mc N$. Furthermore, for
$V\in\Op^c(Z_{sa})$, one has $i_{Z!}\com_{Z,V}\simeq \com_{X,V}$. We can
now write the following sequence of isomorphisms

\begin{eqnarray*}
  R\ho[\com_X]{\com_{X,V}}{\omt X\otimes_{\D_X}\rho_!R\Gamma_{[Z]}\mc M}  &  \simeq  &  R\ho[\com_X]{i_{Z!}\com_{Z,V}}{\omt X\otimes_{\D_X}\rho_!R\Gamma_{[Z]}\mc M} \\
&  \simeq  &  R\ho[\com_Z]{\com_{Z,V}}{i_Z^{!}(\omt X\ou{\rho_!\D_X}\otimes\rho_!R\Gamma_{[Z]}\mc M)}  \\
&  \simeq  &  R\ho[\com_Z]{\com_{Z,V}}{\drt [Z]{(\Dfinv {i_Z}R\Gamma_{[Z]}\mc M)[d_X-d_Z]}} \\
&  \simeq  &  R\ho[\com_Z]{\com_{Z,V}}{\drt[Z]{\mc N}[d_X-d_Z]} \ ,
\end{eqnarray*}
where we have used Theorem \ref{thm:indsheaves} (ii) in the
third isomorphism.

Hence by hypothesis we have that, if $Z\neq X$ is a closed submanifold of
$X$, for any $j>0$, $G\in\bdc[\rea-c]{\com_X}$, with compact support,

\begin{equation} \label{eq:smooth_support}
 \dim_\com R^j\ho[\com_X]{G}{\drt[X] {(R\Gamma_{[Z]}\mc M)}}<+\infty \ .
\end{equation}

Let us treat now the case where $Z\neq X$ is a generic closed analytic
subset of $X$.

Consider the following distinguished triangle

$$  R\Gamma_{[Z\setminus Z_{reg}]}\mc M\lra R\Gamma_{[Z]}\mc M\lra R\Gamma_{[Z_{reg}]}\mc M\ou{+1}\lra  \ .$$

It follows that it is sufficient to prove
\eqref{eq:finiteness_support} for $Z=Z_{reg}$ and $Z=Z\setminus
Z_{reg}$.

As the statement is local, the case of $Z_{reg}$ is easily solved by
reducing to the case of a closed submanifold treated above by choosing
a convenient neighbourhood of $Z_{reg}$ where $Z_{reg}$ is closed.

The case of $Z\setminus Z_{reg}$ is treated by using induction on its
dimension.

\end{proof}

\subsection{The case of meromorphic connections}\label{subsec:finiteness_connections}

In this subsection we are going to prove the following 
\begin{prop}\label{prop:finiteness_connections}
  Let $X$ be a complex analytic manifold of dimension $n\geq2$. Suppose
  that for any complex analytic manifold $Y$ with $1\leq\dim\,Y<n$ and for
  any $\mc N\in\bdc[h]{\D_Y}$, $\drt[Y]{\mc N}$ is $\rea$-preconstructible. Then for
  any $\mc M\in\bdc[h]{\D_X}$ such that
  \begin{equation}
    \label{eq:localized}
    \mc M\simeq R\Gamma_{[X\setminus S(\mc M)]}\mc
    M \ ,
  \end{equation}
 $\drt[X]{\mc M}$ is $\rea$-preconstructible.
\end{prop}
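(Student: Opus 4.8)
The plan is to reduce the statement, via the results of Kedlaya--Mochizuki--Sabbah, to the case of an elementary model, and then treat the elementary model by pulling back to dimension $1$. First I would localize: since $\rea$-preconstructibility can be checked locally on $X$ (the functor $R\ho[\com_X]{\com_U}{-}$ only sees a relatively compact neighbourhood, and the defining condition is a local finiteness statement), and since the hypothesis \eqref{eq:localized} together with the observation recalled before Definition \ref{df:elementary_model} says that $\mc M$ is, away from $S(\mc M)$, nothing but a meromorphic connection, we may work near a point $x_0\in S(\mc M)$. Here one must be a little careful: $S(\mc M)$ need not be a normal crossing hypersurface, but by Proposition \ref{prop:finiteness_support} (whose hypothesis is exactly the inductive hypothesis of the present Proposition) we may freely throw away the contribution supported on the singular locus of $S(\mc M)$ and on lower-dimensional strata — more precisely, after a distinguished triangle argument as in the proof of Proposition \ref{prop:finiteness_support}, it suffices to treat $\mc M$ near a smooth point of a hypersurface, i.e.\ we may assume $Z:=S(\mc M)$ is a normal crossing hypersurface and $\mc M\in\mfr M(X,Z)$.

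Next I would invoke Theorem \ref{thm:asymptotic_lift}: near $x_0$ there is a composite $f=\eta\circ\sigma:X'\to X$ of pointwise blow-ups and a ramification map, with $f^{-1}(Z)$ normal crossing, such that $\dfinv f\mc M$ admits an elementary $\mc A$-decomposition. The key point is then a descent argument: one has to show that $\rea$-preconstructibility of $\drt[X]{\mc M}$ follows from $\rea$-preconstructibility of $\drt[X']{(\dfinv f\mc M)}$. For this I would use Theorem \ref{thm:indsheaves}(ii), which identifies $\drt[X']{(\dfinv f\mc M)}$ with $f^!\drt[X]{\mc M}$ up to shift, together with the fact that $f$ is proper (blow-ups are proper, and ramification maps are finite), so that $Rf_*=Rf_!$ and adjunction $(Rf_!,f^!)$ gives, for $G\in\bdc[\rea-c]{\com_X}$ with compact support,
\begin{equation*}
  R\ho[\com_X]{G}{\drt[X]{\mc M}}\simeq R\ho[\com_{X'}]{f^{-1}G\otimes(\text{orientation/relative dualizing data})}{\drt[X']{(\dfinv f\mc M)}}\ ,
\end{equation*}
with $f^{-1}G$ again $\rea$-constructible with compact support; since $f$ is a real analytic (indeed complex analytic) map of manifolds of the \emph{same} dimension, $f^!$ differs from $f^{-1}$ only by an invertible sheaf placed in degree $0$, so no loss of finiteness occurs. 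This reduces us to the case that $\mc M$ itself is an elementary model $\bigoplus_\alpha \mc L^{\phi_\alpha}\otimes\mc R_\alpha$ on $(X,Z)$ with $Z$ normal crossing.

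Finally, for an elementary model, $\rea$-preconstructibility of $\drt[X]{\mc M}$ is precisely the content of Lemma \ref{lem:finiteness_good_models} referred to in the introduction; its proof in turn rests on the dimension-one results (Theorem \ref{thm:r-c_dim1}, hence the inductive hypothesis through the $1$-dimensional slices) combined with the pull-back formulas for the tempered De Rham complex from \cite{ks_indsheaves}, the idea being to restrict along the projections $x_j=\text{const}$ to reduce each rank-one factor $\mc L^{\phi_\alpha}$ to a one-variable exponential and to control the regular factors $\mc R_\alpha$ by the regular-holonomic case of Theorem \ref{thm:indsheaves}(i). The main obstacle I anticipate is exactly this last step — the geometry of subanalytic open sets in dimension $\geq 2$ is genuinely harder than in dimension $1$, and one cannot simply estimate growth of solutions on arbitrary subanalytic opens as one does on curves; the whole point of the argument is to \emph{avoid} that by slicing and using the $1$-dimensional theorem plus functoriality of $\drt{}$, so making the slicing/pull-back bookkeeping precise (in particular checking that the relevant $f^!$'s and $R\Gamma_{[\cdot]}$'s commute as in Proposition \ref{prop:comm_supp_inv_im} and Theorem \ref{thm:indsheaves}(ii)) is where the real work lies.
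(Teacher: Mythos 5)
Your high-level plan does match the paper's: reduce to $S(\mc M)$ normal crossing, invoke Theorem \ref{thm:asymptotic_lift}, descend through the blow-up/ramification, and then deal with what remains. The reduction steps you describe correspond to Lemma \ref{lem:reduction} and Lemma \ref{lem:finiteness_blow-up}.

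There is, however, one concrete gap, and it sits exactly at the place you move over most quickly. Theorem \ref{thm:asymptotic_lift} reduces you to a connection $\mc M$ that \emph{admits an elementary $\mc A$-decomposition}, not to one that \emph{is} an elementary model. You write ``This reduces us to the case that $\mc M$ itself is an elementary model $\bigoplus_\alpha\mc L^{\phi_\alpha}\otimes\mc R_\alpha$'' and then send the reader straight to Lemma \ref{lem:finiteness_good_models}. That lemma only handles genuine elementary models; having an elementary $\mc A$-decomposition is strictly weaker, since it only provides, germwise on the boundary of the real blow-up, a sectorial gauge transformation $Y_S\in\mathrm{Gl}(r,\mc A(S))$ conjugating $\nabla$ to $\nabla^{el}$. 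These $Y_S$ depend on the multisector $S$ and do not glue to a global isomorphism of meromorphic connections, nor of tempered De Rham complexes (the paper explicitly remarks this after \eqref{eq:iso_drdbt}). The paper's Lemma \ref{lem:finiteness_good_decomposition} is precisely the missing bridge: it observes that $Y_S$ preserves $\dbt(U)$ for $U\subset S$, hence gives a quasi-isomorphism of sections of $D\!R^{\dbt}_{\D_X}\mc M$ and $D\!R^{\dbt}_{\D_X}\mc M^{el}$ over $U$, that any relatively compact subanalytic $V\subset X\setminus Z$ is covered by finitely many multisectors, and that after applying the Cauchy--Riemann functor one gets a \emph{sector-by-sector} isomorphism $\bigr{\drt[X]{\mc M}}_S\simeq\bigr{\drt[X]{\mc M^{el}}}_S$; finiteness over $V$ then follows from the finitely many multisectors and Lemma \ref{lem:finiteness_good_models}, even though no global isomorphism exists. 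Without this argument your chain of reductions does not close.

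A smaller point: your sketch of how Lemma \ref{lem:finiteness_good_models} (through Lemma \ref{lem:Lphi_case}) handles the rank-one factors --- ``restrict along projections $x_j=\mathrm{const}$'' --- is not what the paper does. The paper pulls $\mc L^{1/x}$ on $\com$ back along $(x_1,x_2)\mapsto x_1/x_2$ to get $\mc L^{x_2/x_1}$ on $\com^2$, then pulls that back along the map $g=(q,p):\com^n\to\com^2$ given by the numerator and denominator of $\phi$; the finiteness for $\mc L^\phi$ is extracted from $\Dfinv g\mc L^{x_2/x_1}$ after localizing away from the degeneracy locus of $dg$ (using Proposition \ref{prop:finiteness_support}). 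This is a genuine inverse-image-to-lower-dimension argument (Lemma \ref{lem:finiteness_inv_im} via Theorem \ref{thm:indsheaves}(ii)), not a slicing, though both fit the slogan ``reduce to the $1$-dimensional theorem by functoriality.''

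Finally, for the descent (Lemma \ref{lem:finiteness_blow-up}) the mechanism is the one you anticipate, but the paper leans specifically on the fact that both blow-ups and ramification maps are local isomorphisms away from $Z$: it rewrites $G\otimes\com_{X\setminus Z}\simeq R\pi_!\pi^!(G\otimes\com_{X\setminus Z})$ for a blow-up $\pi$, and for a ramification map decomposes $G$ on a finite cover over which $\rho_l$ is an isomorphism. Properness alone, and the loose claim that $f^!$ is just a degree-$0$ twist of $f^{-1}$, would not be quite enough without this localization to $X\setminus Z$.
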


Before going into the proof of Proposition
\ref{prop:finiteness_connections}, let us make some standard
reductions allowing us to add some hypothesis to the singular support
of $\mc M$.

\begin{lem}\label{lem:reduction}
  \begin{enumerate}
  \item Suppose that for any hypersurface $V\subset X$ and any $\mc
    M\in\bdc[h]{\D_X}$ such that $\mc M\simeq R\Gamma_{[X\setminus
      V]}\mc M $, $\drt[X]{\mc M}$ is $\rea$-preconstructible. Then for any $Z\neq X$
    closed analytic subset of $X$ and any $\mc M\in\bdc[h]{\D_X}$
    such that $\mc M\simeq R\Gamma_{[X\setminus Z]}\mc M $,
    $\drt[X]{\mc M}$ is $\rea$-preconstructible.
  \item Suppose that for any complex analytic manifold $Y$, any $\mc
    N\in\bdc[h]{\D_Y}$ satisfying $\mc N\simeq R\Gamma_{[Y\setminus
      S(\mc N)]}\mc N$ and such that $S(\mc N)$ is a normal crossing
    hypersurface, $\drt[Y]{\mc N}$ is $\rea$-preconstructible. Then, for any $\mc M\in\bdc[h]{\D_X}$
    satisfying \eqref{eq:localized} and such that $S(\mc M)$ is a
    hypersurface, $\drt[X]{\mc M}$ is $\rea$-preconstructible.
  \end{enumerate}
\end{lem}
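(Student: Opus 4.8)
The plan is to prove Lemma \ref{lem:reduction} by the standard devissage technique: (i) reduce the case of a general closed analytic set $Z$ to the case of a hypersurface using the support-reduction already available via Proposition \ref{prop:finiteness_support}, and (ii) reduce the case of a singular hypersurface to the normal crossing case using embedded resolution of singularities together with the pull-back formula of Theorem \ref{thm:indsheaves}(ii).

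For (i), suppose $\mc M\simeq R\Gamma_{[X\setminus Z]}\mc M$ with $Z\neq X$ a closed analytic subset. Let $V\supset Z$ be a hypersurface containing $Z$ (locally such a $V$ exists, e.g. the zero set of one non-zero element of the ideal of an irreducible component of $Z$ of codimension one, or more directly we may reduce to the case where $\dim Z = \dim X - 1$ by decomposing $Z$ into its pure-dimensional pieces and treating the pieces of codimension $\geq 2$ by Proposition \ref{prop:finiteness_support}; these are supported on a set of dimension $< \dim X$, so $\rea$-preconstructibility for them follows from the inductive hypothesis invoked in Proposition \ref{prop:finiteness_connections}). Then the distinguished triangle
$$ R\Gamma_{[V\setminus Z]}R\Gamma_{[X\setminus V]}\mc M \lra R\Gamma_{[X\setminus V]}\mc M \lra R\Gamma_{[X\setminus Z]}\mc M \ou{+1}\lra $$
(which one obtains by combining the two parts of \cite{kashiwara_dmod} Theorem 3.29 applied to $\mc M$, using $\mc M\simeq R\Gamma_{[X\setminus Z]}\mc M$) together with the fact that $R\Gamma_{[V\setminus Z]}R\Gamma_{[X\setminus V]}\mc M$ is supported on $V$, hence $\drt[X]{}$ of it is $\rea$-preconstructible by Proposition \ref{prop:finiteness_support}, reduces the claim for $Z$ to the claim for the hypersurface $V$ applied to $R\Gamma_{[X\setminus V]}\mc M$. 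Since $\drt[X]{-}$ sends distinguished triangles to distinguished triangles and $\rea$-preconstructibility is stable under the cohomology long exact sequence $\mrm R^j\ho[\com_X]{G}{-}$, the conclusion follows.

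For (ii), assume $\mc M\in\bdc[h]{\D_X}$ satisfies \eqref{eq:localized} with $S(\mc M)$ a hypersurface, and let the statement be known for normal crossing singular supports. The question is local, so fix $x_0\in S(\mc M)$. By Hironaka's embedded resolution of singularities there is a proper modification $f:Y\to X$, composition of blow-ups along smooth centres, which is an isomorphism over $X\setminus S(\mc M)$ and such that $f^{-1}(S(\mc M))$ is a normal crossing hypersurface. Put $\mc N:=\Dfinv f\mc M\in\bdc[h]{\D_Y}$; by Proposition \ref{prop:comm_supp_inv_im} one has $\mc N\simeq R\Gamma_{[Y\setminus f^{-1}(S(\mc M))]}\mc N$, and $S(\mc N)\subset f^{-1}(S(\mc M))$ is normal crossing, so $\drt[Y]{\mc N}$ is $\rea$-preconstructible by hypothesis. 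By Theorem \ref{thm:indsheaves}(ii), $\drt[Y]{\mc N}[d_Y]\simeq f^{!}\drt[X]{\mc M}[d_X]$, hence for $G\in\bdc[\rea-c]{\com_X}$ with compact support,
$$ \mrm R\ho[\com_X]{G}{\drt[X]{\mc M}}\simeq \mrm R\ho[\com_X]{G}{Rf_*f^{!}\drt[X]{\mc M}} $$
using that $f$ is a proper modification and $Rf_*f^{!}\simeq \mrm{id}$ off the exceptional locus; more precisely, the adjunction $(Rf_!, f^{!})=(Rf_*, f^{!})$ gives $\mrm R\ho[\com_X]{G}{f^{!}F}\simeq \mrm R\ho[\com_Y]{f^{-1}G}{f^{!}F}$ is not quite what is needed, so instead one uses the projection/base-change to write $\mrm R\ho[\com_X]{G}{Rf_* \drt[Y]{\mc N}}\simeq \mrm R\ho[\com_Y]{f^{-1}G}{\drt[Y]{\mc N}}$ and the distinguished triangle comparing $\drt[X]{\mc M}$ with $Rf_*f^{!}\drt[X]{\mc M}$, whose third term is supported on $S(\mc M)$, hence handled by Proposition \ref{prop:finiteness_support}. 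Since $f^{-1}G$ is again $\rea$-constructible with compact support, $\rea$-preconstructibility of $\drt[Y]{\mc N}$ yields the finiteness of the cohomology of $\mrm R\ho[\com_Y]{f^{-1}G}{\drt[Y]{\mc N}}$, and hence of $\mrm R\ho[\com_X]{G}{\drt[X]{\mc M}}$.

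The main obstacle I expect is part (ii): making the reduction along the resolution $f$ rigorous requires a clean comparison between $\drt[X]{\mc M}$ and $Rf_*\drt[Y]{(\Dfinv f\mc M)}$, i.e. a cone supported on $S(\mc M)$. This is essentially the statement that $Rf_*f^{!}\mathcal F\to\mathcal F$ has cone supported on the exceptional set for a proper modification $f$, applied to $\mathcal F=\drt[X]{\mc M}$; it needs the proper pushforward compatibility of the tempered De Rham complex under proper maps (or at least under blow-ups), which should follow from Theorem \ref{thm:indsheaves}(ii) combined with $Rf_*$ and adjunction, but checking that the cone is genuinely supported on $S(\mc M)$ — equivalently, that $\Dfinv f\mc M$ restricted to $Y\setminus f^{-1}(S(\mc M))$ recovers $\mc M$ on $X\setminus S(\mc M)$ compatibly with De Rham — is where the care is required. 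Once this comparison triangle is in place, the induction on $\dim X$ (via Proposition \ref{prop:finiteness_support} for the support term) closes the argument.
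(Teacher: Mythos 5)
There is a genuine gap in (ii), and the paper's proof uses a different and cleverer idea that avoids it entirely. You flagged the issue yourself: you need the cone of $\drt[X]{\mc M}\to Rf_*f^{!}\drt[X]{\mc M}$ to be ``handled by Proposition~\ref{prop:finiteness_support}.'' But Proposition~\ref{prop:finiteness_support} asserts preconstructibility only for objects of the form $\drt[X]{(R\Gamma_{[Z]}\mc M')}$ with $\mc M'$ holonomic; there is no statement in the paper (and no easy one) that an arbitrary object of $\bdc{\com_\xsa}$ supported on $Z$ is preconstructible, nor that this particular cone arises as $\drt$ of some holonomic module. The paper sidesteps this by never introducing the cone. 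Instead, it first uses the hypothesis $\mc M\simeq R\Gamma_{[X\setminus Z]}\mc M$, Proposition~\ref{prop:tensor_commutation} and Theorem~\ref{thm:indsheaves}(i) to rewrite $\mrm R\ho[\com_X]{G}{\drt[X]{\mc M}}$ as $\mrm R\ho[\com_X]{G\otimes\com_{X\setminus Z}}{\drt[X]{\mc M}}$, i.e.\ it replaces $G$ by a complex supported \emph{off} $Z$. Since $\pi\colon X'\to X$ is an isomorphism over $X\setminus Z$, one has $G\otimes\com_{X\setminus Z}\simeq R\pi_!\pi^{!}(G\otimes\com_{X\setminus Z})$ \emph{on the nose}, with no error term. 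Adjunction and Theorem~\ref{thm:indsheaves}(ii) then give $\mrm R\ho[\com_{X'}]{\pi^{!}(G\otimes\com_{X\setminus Z})}{\drt[X']{(\Dfinv\pi\mc M)}}$, which is finite by the hypothesis on $X'$. So the ``clean comparison'' you were looking for is achieved by moving $G$, not $\drt[X]{\mc M}$, to the other side.

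Your part (i) also deviates from the paper in a way that introduces a subtle dependency. The statement of Lemma~\ref{lem:reduction}(i) has no dimensional inductive hypothesis, yet your argument invokes Proposition~\ref{prop:finiteness_support} (for $R\Gamma_{[V\setminus Z]}\mc M$ and for the codimension~$\geq2$ pieces), which requires preconstructibility on all manifolds of dimension $<\dim X$. You justify this by saying the inductive hypothesis is ``invoked in Proposition~\ref{prop:finiteness_connections},'' but that makes your Lemma~\ref{lem:reduction}(i) depend on an unstated assumption. The paper's own proof of (i) is a purely formal Mayer--Vietoris d\'evissage on the number $l$ of local defining equations of $Z$ (the triangle obtained from $Z=Z_l\cap\hat Z_l$), requiring no input beyond the hypersurface case that is assumed. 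If you want to take your route, you should at minimum record the extra inductive hypothesis explicitly in the lemma's statement.
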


\begin{proof}
  \emph{(i).} Locally $Z\neq X$ be a closed analytic subset of
  $X$. Then, locally, $Z=\{x\in X;\ f_1(x)=\ldots=f_l(x)=0\}$, for
  some $f_j\in\mathcal{O}_X$, $j=1,\ldots,l$. Set $Z_l:=\{x\in X;\ f_l(x)=0\}$
  and $\hat Z_l:=\{x\in X;\ f_1(x)=\ldots=f_{l-1}(x)=0\}$. We have the
  following distinguished triangle (see \cite[Lemma
  3.19]{kashiwara_dmod})
$$ R\Gamma_{[X\setminus Z]}\mc M\lra R\Gamma_{[X\setminus Z_l]}\mc
M\oplus R\Gamma_{[X\setminus\hat Z_l]}\mc M \lra R\Gamma_{[X\setminus
  \hat Z_l]}\mc M \overset{+1}\lra \ . $$
By induction, we reduce to the case of $Z$ a hypersurface which is
assumed in the hypothesis.

\emph{(ii).}  Suppose that $\mc M\in\dmod[h]X$. Let $Z:=S(\mc M)$ be a
hypersurface. Then
$$ R\Gamma_{[X\setminus Z]}\mc M\simeq \mathcal{O}_X[*Z]\Dotimes \mc M  \ . $$

In particular $\mc M$ is a meromorphic connection. It follows that
there exists a finite sequence of pointwise complex blow-up maps
$\pi:X'\to X$ such that $\pi|_{X'\setminus\pi^{-1}(Z)}$ is a
biholomorphism and $S(\Dfinv\pi\mc M)$ has normal crossings.

Now, given $G\in\bdc[\rea-c]{\com_X}$, we have the following sequence
of isomorphisms
  \begin{eqnarray*}
    R\ho[\com_X]{G}{\omt X\ou{\rho_!\D_X}\otimes R\Gamma_{[X\setminus Z]}\mc M}  &\simeq &   R\ho[\com_X]{G}{\omt X\ou{\rho_!\D_X}\otimes(\rho_!\mc M\ou{\rho_!\mathcal{O}_X}\otimes\rho_! R\Gamma_{[X\setminus Z]}\mathcal{O}_X)}  \\
    &  \simeq  &  R\ho[\com_X]{G}{(\omt X\ou{\rho_!\mathcal{O}_X}\otimes\rho_!R\Gamma_{[X\setminus Z]}\mathcal{O}_X)\ou{\rho_!\D_X}\otimes \mc M} \\
    &  \simeq  &  R\ho[\com_X]{G}{R\Ho[\com_\xsa]{\com_{X\setminus Z}}{\omt X}\ou{\rho_!\D_X}\otimes\rho_!\mc M}  \\
    &  \simeq  &  R\ho[\com_X]{G\ou{\com_X}\otimes \com_{X\setminus Z}}{\drt[X]{\mc M}}  \\
    &  \simeq  &  R\ho[\com_X]{R\pi_!\,\pi^!(G\otimes\com_{X\setminus Z})}{\drt[X]{\mc M}} \\
    &  \simeq  &  R\ho[\com_{X'}]{\pi^!(G\otimes\com_{X\setminus
        Z})}{\pi^!(\drt[X]{\mc M})} \\
    &  \simeq  &  R\ho[\com_{X'}]{\pi^!(G\otimes\com_{X\setminus Z})}{\drt[X']{(\Dfinv \pi\mc M)}} \ . \\
  \end{eqnarray*}

We have used Proposition \ref{prop:tensor_commutation} in the
second isomorphism and Theorem \ref{thm:indsheaves}(i) in the third
isomorphism.

The case of $\mc M\in\bdc[h]{\D_X}$ can be reduced to the case $\mc
M\in\dmod[h]X$ treated above by standard techniques.

\end{proof}

It follows that it is sufficient to prove Proposition
\ref{prop:finiteness_connections} adding the hypothesis that $Z:=S(\mc
M)$ is a normal crossing hypersurface. Then, Lemma \ref{lem:reduction}
will imply the general statement of Proposition
\ref{prop:finiteness_connections}. In particular, if $Z$ is an
hypersurface, \eqref{eq:localized} reads as $\mc M\simeq\mc
M\Dotimes\mathcal{O}[*Z]$ and we can treat $\mc M$ as a meromorphic connection.

Now, the rest of the proof of Proposition
\ref{prop:finiteness_connections} is divided in several lemmas which
can be roughly grouped in three steps. In the first part we prove the
statement for elementary models. Essentially, the method consists in
taking the inverse image of connections on curves and using Theorem
\ref{thm:r-c_dim1}. In the second part, we prove the statement for
meromorphic connections with an elementary $\mc
A$-decomposition 
using the first step and Theorem \ref{thm:asymptotic_lift}. In the
last part we study the properties of the tempered De Rham functor
under inverse image of complex blow-ups and ramification maps. In the
end of this Subsection we collect all the lemmas proved and we
conclude the proof of Proposition \ref{prop:finiteness_connections}.

We start by treating the case of elementary models. Recall Definition
\ref{df:elementary_model} (ii). First we need the following

\begin{lem}\label{lem:finiteness_inv_im}
  Let $X,Y$ be two complex manifolds, $f:X\to Y$ a holomorphic map, $\mc
  M\in\bdc[h]{\D_Y}$. If $\drt[Y]{\mc M}$ is $\rea$-preconstructible, then
  $\drt[X]{(\Dfinv f \mc M)}$ is.
\end{lem}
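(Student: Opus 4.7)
The plan is to reduce preconstructibility on $X$ to preconstructibility on $Y$ via the standard adjunction between $Rf_!$ and $f^!$, combined with the pull-back formula for the tempered De Rham complex recalled in Theorem \ref{thm:indsheaves}(ii).

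Fix $G \in \bdc[\rea-c]{\com_X}$ with compact support and $j \in \integer$. I want to bound $\dim_\com R^j\ho[\com_X]{G}{\drt[X]{(\Dfinv f \mc M)}}$. First I would use Theorem \ref{thm:indsheaves}(ii) to rewrite
$$ \drt[X]{(\Dfinv f \mc M)} \simeq f^!\bigr{\drt[Y]{\mc M}}[d_Y - d_X] $$
in $\bdc{\com_\xsa}$. Then, applying the adjunction $(Rf_!, f^!)$ (which transfers from the usual sites to the subanalytic setting after applying $\rho^{-1}$, since $\RH[\com_X]{\cdot}{\cdot}$ is defined via $\rho^{-1}\RH[\com_\xsa]{\cdot}{\cdot}$), I obtain
$$ R\ho[\com_X]{G}{\drt[X]{(\Dfinv f \mc M)}} \simeq R\ho[\com_Y]{Rf_!\, G}{\drt[Y]{\mc M}}[d_Y - d_X]. $$

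Next I need to verify that $Rf_!\, G$ is again an object of $\bdc[\rea-c]{\com_Y}$ with compact support, so that the preconstructibility hypothesis on $\drt[Y]{\mc M}$ applies directly to yield the finite-dimensionality of each cohomology group on the right. Compactness of the support is immediate: $\supp(Rf_!\, G) \subset f(\supp G)$, and $f(\supp G)$ is compact since $\supp G$ is compact and $f$ is continuous. The $\rea$-constructibility of $Rf_!\, G$ follows from the fact that a holomorphic map is real analytic, hence subanalytic, and that $Rf_!$ preserves $\rea$-constructibility for subanalytic maps when applied to objects with proper support over $f$ (here the support of $G$ is compact, so $f|_{\supp G}$ is proper).

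Having $Rf_!\, G \in \bdc[\rea-c]{\com_Y}$ with compact support, the hypothesis that $\drt[Y]{\mc M}$ is $\rea$-preconstructible gives
$$ \dim_\com R^{j+d_Y-d_X}\ho[\com_Y]{Rf_!\, G}{\drt[Y]{\mc M}} < +\infty, $$
which is precisely the required finiteness for $\drt[X]{(\Dfinv f \mc M)}$. The only mildly delicate point is checking that $Rf_!$ preserves $\rea$-constructibility in this subanalytic-site framework; once that is granted (it is a standard fact for subanalytic maps with proper support), everything else is formal bookkeeping with adjunctions and the pull-back formula.
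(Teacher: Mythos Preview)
Your proof is correct and follows essentially the same approach as the paper: apply Theorem~\ref{thm:indsheaves}(ii) to identify $\drt[X]{(\Dfinv f\mc M)}$ with $f^!(\drt[Y]{\mc M})[d_Y-d_X]$, then use the $(Rf_!,f^!)$ adjunction to reduce to $R\ho[\com_Y]{Rf_!G}{\drt[Y]{\mc M}}$. The paper is more terse and simply writes ``the conclusion follows'' after the adjunction, whereas you spell out why $Rf_!G$ remains $\rea$-constructible with compact support; this extra justification is welcome and entirely standard.
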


\begin{proof}
  Set $d_X:=\dim X$ and $d_Y:=\dim Y$. Given
  $G\in\bdc[\rea-c]{\com_X}$, we have the following isomorphisms
  \begin{eqnarray*}
    R\ho[\com_X]{G}{\omt X\ou{\rho_!\D_X}\otimes \rho_!\Dfinv f\mc M}  &\simeq &   R\ho[\com_X]{G}{f^!(\omt Y\ou{\rho_!\D_Y}\otimes\rho_!\mc M)[d_Y-d_X]}  \\
    &  \simeq  &  R\ho[\com_Y]{Rf_!G}{\omt X\ou{\rho_!\D_X}\otimes\rho_!\mc M[d_Y-d_X]} \ ,  \\
  \end{eqnarray*}
  where we have used Theorem \ref{thm:indsheaves}(ii) in the first
  isomorphism.

The conclusion follows.
\end{proof}

Now, we can suppose that $X\simeq \com^n$, $n\geq2$, $Z=\{(x_1,\ldots,
x_k)\in\com^n;\, x_1\cdot\ldots\cdot x_n=0\}$. Recall that, for
$\phi\in\mathcal{O}_X[*Z]/\mathcal{O}_X$, we denote by $\mc L^\phi$ the meromorphic
connection of rank $1$ whose matrix in a basis is $d\phi$.

\begin{lem}\label{lem:Lphi_case}
  Suppose that for any complex analytic manifold $Y$ with
  $1\leq\dim\,Y<n$ and for any $\mc N\in\bdc[h]{\D_Y}$, $\drt[Y]{\mc
    N}$ is $\rea$-preconstructible.  Let
  $\phi\in\frac{\mathcal{O}_X[*Z]}{\mathcal{O}_X}$, then $\drt [X] {\mc L^\phi}$ is
  $\rea$-preconstructible.
\end{lem}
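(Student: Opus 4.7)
The plan is to realize $\mc L^\phi$ as the pull-back of the rank-one exponential connection $\mc L^t$ on the curve $\mathbb{P}^1$ along the meromorphic function $\phi\colon X\dashrightarrow\mathbb{P}^1$, and then combine Theorem \ref{thm:r-c_dim1} with Lemma \ref{lem:finiteness_inv_im}. The obstacle is that $\phi$ is only a rational map: writing a representative of $\phi$ as $p/q$ with $p,q\in\mathcal{O}_X$ coprime, the indeterminacy locus $I=\{p=q=0\}\subset Z$ has codimension at least two in $X$, so in particular $\dim I<n$.

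The first step is to resolve the indeterminacy. A finite sequence of blow-ups with centers over $I$ yields a proper birational morphism $\sigma\colon\tilde X\to X$, an isomorphism over $X\setminus I$, such that $\tilde\phi:=\phi\circ\sigma\colon\tilde X\to\mathbb{P}^1$ is a genuine holomorphic map and $\sigma^*\mc L^\phi\simeq\tilde\phi^*\mc L^t$. Since $\mathbb{P}^1$ is a curve, Theorem \ref{thm:r-c_dim1} yields $\rea$-constructibility (hence $\rea$-preconstructibility) of $\drt[\mathbb{P}^1]{\mc L^t}$, and Lemma \ref{lem:finiteness_inv_im} propagates this to $\drt[\tilde X]{\sigma^*\mc L^\phi}$ on $\tilde X$.

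To descend to $X$, pick a compactly supported $G\in\bdc[\rea-c]{\com_X}$. The unit of the adjunction $(\sigma^{-1},R\sigma_*)$ fits in a distinguished triangle
$$Q_G\lra G\lra R\sigma_*\sigma^{-1}G\overset{+1}\lra$$
with $Q_G$ having compact support contained in $I$, because $\sigma$ is an isomorphism outside $I$. Applying $R\ho[\com_X]{-}{\drt[X]{\mc L^\phi}}$: for the middle term, the adjunction $(R\sigma_*{=}R\sigma_!,\sigma^!)$ together with Theorem \ref{thm:indsheaves}(ii), which (as $d_{\tilde X}=d_X$) reads $\sigma^!\drt[X]{\mc L^\phi}\simeq\drt[\tilde X]{\sigma^*\mc L^\phi}$, gives
$$R\ho[\com_X]{R\sigma_*\sigma^{-1}G}{\drt[X]{\mc L^\phi}}\simeq R\ho[\com_{\tilde X}]{\sigma^{-1}G}{\drt[\tilde X]{\sigma^*\mc L^\phi}},$$
which is finite-dimensional since $\sigma^{-1}G$ has compact support. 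The contribution of $Q_G$ is handled via the excision triangle $R\Gamma_{[I]}\mc L^\phi\to\mc L^\phi\to R\Gamma_{[X\setminus I]}\mc L^\phi\overset{+1}\to$: after $\drt[X]{-}$ its third term identifies, via Proposition \ref{prop:tensor_commutation} together with Theorem \ref{thm:indsheaves}(i) applied to $R\Gamma_{[X\setminus I]}\mathcal{O}_X$ (whose complex of solutions is $\com_{X\setminus I}$), with an object of the form $\RH[\com_\xsa]{\com_{X\setminus I}}{\drt[X]{\mc L^\phi}}$, against which $R\ho[\com_X]{Q_G}{-}$ vanishes because $Q_G\otimes\com_{X\setminus I}=0$; its first term is $\rea$-preconstructible by Proposition \ref{prop:finiteness_support}, legitimately applied under the inductive hypothesis of Lemma \ref{lem:Lphi_case} since $\dim I<n$.

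The main obstacle is the coherent assembly of the direct/inverse-image identifications for the tempered De Rham functor---the proper-pullback identity $\sigma^!\drt[X]{\mc L^\phi}\simeq\drt[\tilde X]{\sigma^*\mc L^\phi}$ and the Hom-description of $\drt[X]{R\Gamma_{[X\setminus I]}\mc L^\phi}$---together with careful book-keeping of shifts, supports and constructibility classes, each ingredient being extracted from the formalism of \cite{ks_indsheaves}.
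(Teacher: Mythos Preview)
Your argument is sound in outline and genuinely different from the paper's. The paper avoids resolving indeterminacy: it factors $\phi=p/q$ through the \emph{regular} map $g=(q,p)\colon\com^n\to\com^2$, pulls back $\mc L^{x_2/x_1}$ (itself obtained from $\mc L^{1/x}$ on $\com$), and then disentangles $H^0(\Dfinv g\mc L^{x_2/x_1})\simeq\mc L^\phi$ from the higher cohomology of $\Dfinv g$ by excising the critical locus of $g$ via Proposition~\ref{prop:finiteness_support}. You instead resolve the rational map $\phi$ by blowing up its base locus $I$, pull back directly from a curve, and descend along $\sigma$ using the adjunction triangle together with Proposition~\ref{prop:finiteness_support} applied to $I$. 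Both routes ultimately rely on the same two ingredients (Lemma~\ref{lem:finiteness_inv_im} and Proposition~\ref{prop:finiteness_support}); yours is geometrically cleaner, the paper's stays entirely on $\com^n$.

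One inaccuracy to fix: the identification $\sigma^*\mc L^\phi\simeq\tilde\phi^*\mc L^t$ is not literally true. As an $\mathcal{O}_{\tilde X}$-module, $\dfinv\sigma\mc L^\phi\simeq\mathcal{O}_{\tilde X}[*\sigma^{-1}Z]$, whereas $\dfinv{\tilde\phi}\mc L^t\simeq\mathcal{O}_{\tilde X}[*\tilde\phi^{-1}(\infty)]$, and exceptional components of $\sigma^{-1}Z$ need not lie in $\tilde\phi^{-1}(\infty)$ (already for $\phi=x_1/x_2$ on $\com^2$ the exceptional $\mathbb P^1$ meets $\tilde\phi^{-1}(\infty)$ in a single point). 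What does hold is
\[
\dfinv\sigma\mc L^\phi\ \simeq\ \bigl(\dfinv{\tilde\phi}\mc L^t\bigr)\Dotimes\mathcal{O}_{\tilde X}[*\sigma^{-1}Z]\ \simeq\ R\Gamma_{[\tilde X\setminus\sigma^{-1}Z]}\bigl(\dfinv{\tilde\phi}\mc L^t\bigr),
\]
so the excision triangle for $\sigma^{-1}Z$ together with Proposition~\ref{prop:finiteness_support} on $\tilde X$ (the inductive hypothesis applies since $\dim\sigma^{-1}Z<\dim\tilde X=n$) gives $\rea$-preconstructibility of $\drt[\tilde X]{\dfinv\sigma\mc L^\phi}$ from that of $\drt[\tilde X]{\dfinv{\tilde\phi}\mc L^t}$. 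With this correction your descent argument goes through unchanged.
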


\begin{proof}

If $n=1$, the conclusion follows by Theorem \ref{thm:r-c_dim1}.

First consider the map $f:\com^2\setminus\{x_2=0\}\to\com$,
$f(x_1,x_2)=\frac {x_1}{x_2}$. As $df\neq 0$, $\Dfinv f \mc
L^{1/x}\simeq\dfinv f \mc L^{1/x}\simeq \mc L^{x_2/x_1}$ and it can be
extended to $\com^2$. Using Theorem \ref{thm:r-c_dim1} and Lemma
\ref{lem:finiteness_inv_im}, we obtain that $\drt [\com^2]{\mc
  L^{x_2/x_1}}$ is $\rea$-preconstructible.

Given $\phi\in\frac{\mathcal{O}_{\com^n}[*Z]}{\mathcal{O}_{\com^n}}$, one checks easily
that there exist $p,q\in\com[x_1,\ldots,x_n]$ satisfying the
conditions
\begin{enumerate}
\item $\phi=\frac{p(x_1,\ldots,x_n)}{q(x_1,\ldots,x_n)}$,
\item if we define
$$  \begin{array}{rccl}
    g:&\com^n&\lra&\com^2\\
&(y_1,\ldots,y_n)&\longmapsto&(q(y_1,\ldots,y_n),p(y_1,\ldots,y_n)) \ ,
  \end{array}
$$
then  $S:=\{x\in\com^n;\ \mathrm{rk}\,dg(x)<2\}\neq X$.
\end{enumerate}


Set $\mc N:=\Dfinv
  g\mc L^{x_2/x_1}$. Clearly $H^0\mc N\simeq\dfinv
  g\mc L^{x_2/x_1}\simeq \mc L^\phi$. 

  Now, by Lemma \ref{lem:finiteness_inv_im} and the
  $\rea$-preconstructibility of $\drt [\com^2]{\mc L^{x_2/x_1}}$
  (resp. by Proposition \ref{prop:finiteness_support}) we have that
  $\drt[\com^n]{\mc N}$ (resp. $\drt[\com^n]{R\Gamma_{[S]}\mc N}$) is
  $\rea$-preconstructible. Hence, by the following distinguished
  triangle
$$ R\Gamma_{[S]}\mc N\lra\mc N\lra R\Gamma_{[X\setminus S]}\mc N\overset{+1}\lra \ , $$
we have that $\drt[\com^n]{R\Gamma_{[X\setminus S]}\mc N}$ is
$\rea$-preconstructible. Remark that, as $f$ is smooth outside $S$,
$H^j\mc N$ has support in $S$, for $j\geq1$. In particular,
$R\Gamma_{[X\setminus S]}H^0\mc N\simeq R\Gamma_{[X\setminus S]}\mc
N$. Hence $\drt[\com^n]{R\Gamma_{[X\setminus S]}H^0\mc N}$ is
$\rea$-preconstructible too. Furthermore, by Proposition
\ref{prop:finiteness_support}, $\drt[\com^n]{R\Gamma_{[S]}H^0\mc N}$
is $\rea$-preconstructible. Using the distinguished triangle
$$ R\Gamma_{[S]}H^0\mc N\lra H^0\mc N\lra R\Gamma_{[X\setminus S]}H^0\mc N\overset{+1}\lra \ , $$
we have that $\drt[\com^n]{H^0\mc N}$ is $\rea$-preconstructible and
the statement is proved.


\end{proof}

We conclude the first step of the proof Proposition
\ref{prop:finiteness_connections} with the following

\begin{lem}\label{lem:finiteness_good_models}
  Let $\mc M\in\dbdx[h]$ and $\mc R\in\dbdx[rh]$. If $\drt [X]{\mc M}$
  is $\rea$-preconstructible, then $\drt[X]{(\mc M\Dotimes\mc R)}$
  is. In particular, if for any complex analytic manifold $Y$ with
  $1\leq\dim\,Y<n$ and for any $\mc N\in\bdc[h]{\D_Y}$, $\drt[Y]{\mc
    N}$ is $\rea$-preconstructible, then for any elementary model
  $\mc M$, $\drt[X]{\mc M}$ is $\rea$-preconstructible.
\end{lem}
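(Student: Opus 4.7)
The argument splits into two parts matching the two clauses. \emph{Part 1: tensoring by a regular holonomic module preserves $\rea$-preconstructibility of the tempered De Rham complex.} Set $L:=\RH[\D_X]{\mc R}{\mathcal{O}_X}$, which is $\com$-constructible because $\mc R$ is regular holonomic. My plan is to establish the projection-type isomorphism
$$\drt[X]{(\mc M\Dotimes\mc R)}\simeq\RH[\com_\xsa]{L}{\drt[X]{\mc M}},$$
from which the conclusion will be immediate: for any compactly supported $G\in\bdc[\rea-c]{\com_X}$, tensor--Hom adjunction yields
$$R\ho[\com_X]{G}{\drt[X]{(\mc M\Dotimes\mc R)}}\simeq R\ho[\com_X]{G\ou[L]{\com_X}\otimes L}{\drt[X]{\mc M}},$$
and since $G\ou[L]{\com_X}\otimes L$ remains compactly supported and $\rea$-constructible, the $\rea$-preconstructibility hypothesis on $\drt[X]{\mc M}$ delivers finite-dimensional cohomologies in every degree.

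For the isomorphism itself, I apply Proposition \ref{prop:tensor_commutation} with $\mc N=\omt X$, $\mc M_1=\mc R$, $\mc M_2=\mc M$ to rewrite
$$\drt[X]{(\mc M\Dotimes\mc R)}\simeq(\omt X\ou[L]{\rho_!\mathcal{O}_X}\otimes\rho_!\mc R)\ou[L]{\rho_!\D_X}\otimes\rho_!\mc M,$$
and then invoke the ind-sheaf Riemann--Hilbert correspondence to identify $\omt X\ou[L]{\rho_!\mathcal{O}_X}\otimes\rho_!\mc R\simeq\RH[\com_\xsa]{L}{\omt X}$. This identification is implicit in the third isomorphism of the computation in the proof of Lemma \ref{lem:reduction}(ii), where it appears in the particular case $\mc R=R\Gamma_{[X\setminus Z]}\mathcal{O}_X$. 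A final commutation of $\RH[\com_\xsa]{L}{\cdot}$ past the tensor over $\rho_!\D_X$, permissible because $L$ is $\com$-constructible, assembles the projection formula.

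\emph{Part 2: elementary models.} An elementary model is a finite direct sum $\bigoplus_{\alpha\in A}\mc L^{\phi_\alpha}\Dotimes\mc R_\alpha$ of twists of regular meromorphic connections. $\rea$-preconstructibility being additive, it suffices to treat a single summand $\mc L^{\phi_\alpha}\Dotimes\mc R_\alpha$. Under the standing inductive hypothesis on manifolds of dimension strictly less than $n$, Lemma \ref{lem:Lphi_case} applies and yields the $\rea$-preconstructibility of $\drt[X]{\mc L^{\phi_\alpha}}$; Part 1 invoked with $\mc M=\mc L^{\phi_\alpha}$ and $\mc R=\mc R_\alpha$ then closes the argument.

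The main obstacle is the identification $\omt X\ou[L]{\rho_!\mathcal{O}_X}\otimes\rho_!\mc R\simeq\RH[\com_\xsa]{L}{\omt X}$ for an arbitrary regular holonomic $\mc R$. For the special case $\mc R=\mathcal{O}_X[*Z]$ appearing in Lemma \ref{lem:reduction}(ii), the $\mathcal{O}_X$-flatness of $\mathcal{O}_X[*Z]$ and the simple $\D$-structure encoded by the open embedding $X\setminus Z\hookrightarrow X$ make the identification essentially direct; for a general regular holonomic $\mc R$, one must deploy the full Kashiwara--Schapira Riemann--Hilbert/ind-sheaf machinery developed in \cite{ks_indsheaves}. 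Everything else in the argument is routine adjunction and functoriality bookkeeping.
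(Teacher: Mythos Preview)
Your proof is correct and follows essentially the same route as the paper: both apply Proposition~\ref{prop:tensor_commutation} to rearrange the tensor, invoke Theorem~\ref{thm:indsheaves}(i) for the identification $\omt X\ou[L]{\rho_!\mathcal{O}_X}\otimes\rho_!\mc R\simeq\RH[\com_\xsa]{L}{\omt X}$, and then use tensor--Hom adjunction to reduce to $G\otimes L$; the second clause is likewise obtained by combining Lemma~\ref{lem:Lphi_case} with Part~1. The ``main obstacle'' you flag is exactly what the paper disposes of by citing Theorem~\ref{thm:indsheaves}(i) (the regular Riemann--Hilbert correspondence for $\otxsa$ from \cite{ks_indsheaves}), so no additional argument is needed there.
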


\begin{proof}
  Let $L:=R\Ho[\D_X]{\mc R}{\mathcal{O}_X}\in\bdc[\rea-c]{\com_X}$. For
  $G\in\bdc[\rea-c]{\com_X}$, consider the following sequence of
  isomorphisms
 \begin{eqnarray*}
   R\ho[\com_X]{G}{\omt X\ou{\rho_!\D_X}\otimes \rho_!(\mc M\Dotimes\mc R)}  &\simeq &   R\ho[\com_X]{G}{(\omt X\ou{\rho_!\mathcal{O}_X}\otimes\rho_!\mc R)\ou{\rho_!\D_X}\otimes\mc M} \\
&\simeq &   R\ho[\com_X]{G}{R\Ho[\com_\xsa]L{\omt X}\ou{\rho_!\D_X}\otimes\rho_!\mc M} \\
&\simeq &   R\ho[\com_X]{G}{R\Ho[\com_\xsa]L{\drt[X]{\mc M}}} \\
&\simeq &   R\ho[\com_X]{G\ou{\com_X}\otimes L}{\drt[X]{\mc M}}  \ .\\
  \end{eqnarray*}

  In the previous series of isomorphisms we have used Proposition
  \ref{prop:tensor_commutation} in the first isomorphism and Theorem
  \ref{thm:indsheaves}(i) in the second isomorphism. Hence, the first
  part of the statement is proved.

  To conclude the proof, it is sufficient to combine the first part of
  the statement with Lemma \ref{lem:Lphi_case}.
\end{proof}

Now, we go into the second step of the proof of Proposition
\ref{prop:finiteness_connections}. We consider meromorphic connections
with an elementary $\mc A$-decomposition.

\begin{lem}\label{lem:finiteness_good_decomposition}
  Suppose that for any complex analytic manifold $Y$ with
  $1\leq\dim\,Y<n$ and for any $\mc N\in\bdc[h]{\D_Y}$, $\drt[Y]{\mc
    N}$ is $\rea$-preconstructible. Let $\mc M\in\dmod[h]X$ be such
  that
  \begin{enumerate}
  \item $S(\mc M)$ is a normal crossing hypersurface,
  \item $\mc M\simeq R\Gamma_{[X\setminus S(\mc M)]}\mc M$,
  \item $\mc M$ admits an elementary $\mc A$-decomposition as a
    meromorphic connection.  
  \end{enumerate}
Then $\drt[X]{\mc M}$ is $\rea$-preconstructible.
\end{lem}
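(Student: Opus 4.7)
The plan is to reduce the statement to the elementary model case treated in Lemma~\ref{lem:finiteness_good_models}, by exploiting the local asymptotic splitting afforded by the $\mc A$-decomposition together with a covering argument on the real oriented blow-up $\pi:\widetilde X\to X$.

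First, I would observe that $\rea$-preconstructibility can be tested locally on~$X$: since $G\in\bdc[\rea-c]{\com_X}$ is compactly supported, a standard Mayer--Vietoris argument over a finite open cover of $\mrm{supp}(G)$ reduces the problem to proving, for each $x_0\in Z:=S(\mc M)$, the existence of an open coordinate neighbourhood $W\ni x_0$ in which $Z\cap W=\{x_1\cdots x_k=0\}$ and $\drt[W]{\mc M|_W}$ is $\rea$-preconstructible.

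Next, hypothesis~(iii) together with the compactness of $\pi^{-1}(\overline W\cap Z)$ permits covering the latter by finitely many multisectors $U_1,\ldots,U_N$ on each of which an elementary local model $\mc M^{el}_j$ is given, together with an $\mc A_{\widetilde W}$-linear isomorphism $Y_j:\mc M_{\widetilde W}|_{U_j}\overset{\sim}{\to} \mc M^{el}_{j,\widetilde W}|_{U_j}$; the gluing data on the overlaps $U_j\cap U_{j'}$ encode the Stokes structure of~$\mc M$. The goal becomes to transfer these asymptotic identifications, which live on $\widetilde W$, into a finiteness statement for the tempered De Rham complex on~$W_{sa}$. After rewriting, via Theorem~\ref{thm:indsheaves}(i), $R\ho[\com_W]{G}{\drt[W]{\mc M|_W}}$ as an $R\mrm{Hom}$ between $\omt W$ and a $\rea$-constructible sheaf determined by $G$ and the holomorphic solutions of~$\mc M$, I would set up a \v{C}ech-type spectral sequence subordinated to a subanalytic refinement of the open covering $\{\pi(U_j)\}$ of $\overline W\cap Z$ that expresses the relevant Ext groups in terms of analogous groups for the elementary models~$\mc M^{el}_j$, plus finitely many Stokes correction terms. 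Each $\drt[W]{\mc M^{el}_j}$-contribution is finite-dimensional by Lemma~\ref{lem:finiteness_good_models}, and the correction terms would involve only finitely many $\rea$-constructible pieces, yielding the desired finiteness.

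The principal obstacle lies in the construction of this covering-subordinated reduction: the $\mc A_{\widetilde W}$-sections in which $Y_j$ lives do not directly descend to tempered sections on $W_{sa}$, since asymptotic expansions produce moderate but not a priori tempered estimates on arbitrary subanalytic subsets of~$W$. To bridge this gap one must exploit the precise interplay between $\omt W$ and sheaves on~$\widetilde W$: the $\mc A$-decomposition has to be upgraded to an identification at the level of the tempered De Rham complex modulo controlled, $\rea$-constructible and finite-dimensional error terms, reflecting at the tempered level the Stokes phenomenon that obstructs a global splitting. Verifying that such corrections remain finite-dimensional, and that they can be organised into an effective spectral sequence whose terms are all $\rea$-preconstructible by Lemma~\ref{lem:finiteness_good_models}, will be the heart of the argument.
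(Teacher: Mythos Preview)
Your plan has the right overall shape---localise, cover by multisectors, reduce to elementary models via Lemma~\ref{lem:finiteness_good_models}---but it contains one genuine error and misidentifies the central difficulty.

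The error is the appeal to Theorem~\ref{thm:indsheaves}(i). That result applies only to \emph{regular} holonomic $\mc L$, and the whole point of the lemma is that $\mc M$ is irregular. You cannot rewrite $R\ho[\com_W]{G}{\drt[W]{\mc M|_W}}$ as an $R\mrm{Hom}$ against a constructible complex of holomorphic solutions; if you could, the lemma (indeed the whole paper) would be trivial.

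More importantly, the ``principal obstacle'' you describe is not an obstacle, and trying to organise a spectral sequence with Stokes correction terms is unnecessary. The paper's key observation is this: an element of $\mc A_{\widetilde X}(S)$ is, by definition, a $\mc C^\infty$ function on the multisector $S\subset\widetilde X$; hence for any $U\in\Op^c(\xsa)$ with $U\subset\pi(S)\setminus Z$, multiplication by its restriction to $U$ preserves $\dbt_\xsa(U)$. Consequently the matrix $Y_S\in\mrm{Gl}(r,\mc A(S))$ realising the asymptotic decomposition acts as an honest isomorphism on the tempered-distribution De~Rham complex over any such $U$, and after sheafifying one obtains
\[
\big(\drt[X]{\mc M}\big)_S\ \simeq\ \big(\drt[X]{\mc M^{el}}\big)_S
\]
on the nose---no error terms, no corrections. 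This isomorphism is of course non-canonical and depends on $S$, which is why it does not glue; but that is irrelevant for $\rea$-preconstructibility, since one first reduces (using $\mc M\simeq R\Gamma_{[X\setminus Z]}\mc M$) to testing on $\com_V$ with $V\subset X\setminus Z$ relatively compact subanalytic, then covers $V$ by finitely many small multisectors and applies Mayer--Vietoris. Each piece is then controlled by Lemma~\ref{lem:finiteness_good_models}. The passage from $\mc A$ to $\dbt$ is therefore the one-line heart of the argument, not a gap to be bridged by additional machinery.
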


\begin{proof}

We want to prove that for any $G\in\bdc[\rea-c]{\com_X}$ with compact support, $j\in\integer$
\begin{equation}
  \label{eq:finiteness}
  \dim R^j\ho[\com_\xsa]G{\drt[X]{\mc M}}<+\infty\ .
\end{equation}

First, let us remark that it is sufficient to prove
\eqref{eq:finiteness} for $G=\com_U$ for $U\in\Op^c(\xsa)$. Then, we
have the following sequence of isomorphisms

  \begin{eqnarray*}
    R\ho[\com_X]{\com_U}{\omt X\ou{\rho_!\D_X}\otimes R\Gamma_{[X\setminus Z]}\mc M}  &\simeq &   R\ho[\com_X]{\com_U}{\omt X\ou{\rho_!\D_X}\otimes\rho_!(\mc M\ou{\mathcal{O}_X}\otimes \mathcal{O}_X[*Z])}  \\
    &  \simeq  &  R\ho[\com_X]{\com_U}{(\omt X\ou{\rho_!\mathcal{O}_X}\otimes\rho_!\mathcal{O}_X[*Z])\ou{\rho_!\D_X}\otimes \mc M} \\
    &  \simeq  &  R\ho[\com_X]{\com_U}{R\Ho[\com_\xsa]{\com_{X\setminus Z}}{\omt X}\ou{\rho_!\D_X}\otimes\rho_!\mc M}  \\
    &  \simeq  &  R\ho[\com_X]{\com_U\ou{\com_X}\otimes \com_{X\setminus Z}}{\drt[X]{\mc M}}   \ .\\
  \end{eqnarray*}

  Hence, it is sufficient to prove \eqref{eq:finiteness} for
  $G=\com_V$, $V\in\Op^c(\xsa)$, $V\subset X\setminus Z$.

  In particular, for any $V\subset X\setminus Z$, there exists a
  finite family of open multisectors $\{S_j\}_{j\in J}$ such that
  $V\subset\ou{j\in J}\cup S_j$.

  Now, let $\Omega_X^\bullet$ be the complex of differential forms on
  $X$. Recall definition \eqref{eq:drodbt} and the isomorphism
  \eqref{eq:drodbt_drddbt}. Since, for $U\in\Op^c(\xsa)$, $\dbt$ is
  $\Gamma(U,\cdot)$-acyclic, we have that
  $\Gamma(U,D\!R^{\dbt}_{\D_X}\mc M)$ is quasi-isomorphic to the
  complex

$$ 0\lra\rho_!\mc M(U)
\overset{\nabla^{(0)}}\lra
\rho_!\mc M\ou[]{\rho_!\mathcal{O}_X}{\otimes}\rho_!\Omega^1_X\ou[]{\rho_!\mathcal{O}_X}{\otimes}\dbt (U) 
\overset{\nabla^{(1)}}\lra
\rho_!\mc M\ou[]{\rho_!\mathcal{O}_X}{\otimes}\rho_!\Omega^2_X\ou[]{\rho_!\mathcal{O}_X}{\otimes}\dbt (U) 
\overset{\nabla^{(2)}}\lra
\ldots \ .
$$

Suppose that $\mathrm{rk}\mc M=r$. As $\mc M$ has an elementary $\mc
A$-decomposition, for any small enough multisector $S$ there exists
$Y_S\in\mathrm Gl(r,\mc A(S))$ and an elementary model $(\mc
M^{el},\nabla^{el})$ giving a quasi-isomorphism of complexes

$$
\xymatrix{
0\ar[r] & \mc M(S)\ar[r]^{\nabla^{(0)}\phantom{aaaaa}}\ar[d]^{Y_S} &
\mc M\ou[]{\mathcal{O}_X}{\otimes}\Omega^1_X\ou[]{\mathcal{O}_X}{\otimes}\mc A
(S)
\ar[r]^{\nabla^{(1)}}\ar[d]^{Y_S}
&
\mc M\ou[]{\mathcal{O}_X}{\otimes}\Omega^2_X\ou[]{\mathcal{O}_X}{\otimes}\mc A
(S)
\ar[r]^{\phantom{aaaaaaaa}\nabla^{(2)}}\ar[d]^{Y_S}& 
\ldots
\\
0\ar[r] & \mc M^{el}(S)\ar[r]^{\nabla^{el,(0)}\phantom{aaaaaa}} &
\mc M^{el}\ou[]{\mathcal{O}_X}{\otimes}\Omega^1_X\ou[]{\mathcal{O}_X}{\otimes}\mc A
(S)
\ar[r]^{\nabla^{el,(1)}}&
\mc M^{el}\ou[]{\mathcal{O}_X}{\otimes}\Omega^2_X\ou[]{\mathcal{O}_X}{\otimes}\mc A
(S)
\ar[r]^{\phantom{aaaaaaaaaa}\nabla^{el,(2)}}&
\ldots\ .
}
$$

For sake of shortness, in the diagram above, we have omitted the
notation of Definition \ref{df:elementary_model} relative to the real
blow-up $\widetilde X$.

Now, let $U\in\Op^c(\xsa)$ be contained in a sufficiently small
multisector. As the restriction of $Y_S$ to $U$ respects $\dbt(U)$, we
have the following quasi-isomorphism of complexes

$$
\xymatrix{
0\ar[r] & \rho_!\mc M(U)\ar[r]^{\nabla^{(0)}\phantom{aaaaaaaa}}\ar[d]^{Y_S|_U} &
\rho_!\mc M\ou[]{\rho_!\mathcal{O}_X}{\otimes}\rho_!\Omega^1_X\ou[]{\rho_!\mathcal{O}_X}{\otimes}\dbt
(U)
\ar[r]^{\nabla^{(1)}}\ar[d]^{Y_S|_U}
&
\rho_!\mc M\ou[]{\rho_!\mathcal{O}_X}{\otimes}\rho_!\Omega^2_X\ou[]{\rho_!\mathcal{O}_X}{\otimes}\dbt
(U)
\ar[r]^{\phantom{aaaaaaaaaaa}\nabla^{(2)}}\ar[d]^{Y_S|_U}& 
\ldots
\\
0\ar[r] & \rho_!\mc M^{el}(U)\ar[r]^{\nabla^{el,(0)}\phantom{aaaaaaaa}} &
\rho_!\mc M^{el}\ou[]{\rho_!\mathcal{O}_X}{\otimes}\rho_!\Omega^1_X\ou[]{\rho_!\mathcal{O}_X}{\otimes}\dbt
(U)
\ar[r]^{\nabla^{el,(1)}}&
\rho_!\mc
M^{el}\ou[]{\rho_!\mathcal{O}_X}{\otimes}\rho_!\Omega^2_X\ou[]{\rho_!\mathcal{O}_X}{\otimes}\dbt
(U)
\ar[r]^{\phantom{aaaaaaaaaaaa}\nabla^{el,(2)}}&
\ldots \ .
}
$$

The second row of the above diagram is isomorphic to
$\D\!R^{\dbt}_{\mathcal{O}_X}(\mc M^{el})$.

By the isomorphism \eqref{eq:drodbt_drddbt}, we have 
\begin{equation}
  \label{eq:iso_drdbt_U}
  D\!R^{\dbt}_{\D_X}\mc M(U)\simeq D\!R^{\dbt}_{\D_X}\mc M^{el}(U) \ .
\end{equation}

Sheafifying \eqref{eq:iso_drdbt_U} we have that 
\begin{equation}
  \label{eq:iso_drdbt}
  \big(D\!R^{\dbt}_{\D_X}\mc M\big)_S\simeq\big( D\!R^{\dbt}_{\D_X}\mc M^{el}\big)_S \ .
\end{equation}

Remark that the isomorphisms \eqref{eq:iso_drdbt} depend on $S$ and
they can't be glued in a global isomorphism.

Now, taking the solutions of the Cauchy-Riemann system (i.e. applying
the functor $\cdot\ou[]{\rho_!\D_{\overline X}}{\otimes}\rho_!\mathcal{O}_{\overline X}$ to
\eqref{eq:iso_drdbt}) we have
\begin{equation}
  \label{eq:iso_drt}
  \big(\drt[X]{\mc M}\big)_S\simeq\big(\drt[X]{\mc M^{el}}\big)_S \ .
\end{equation}

The $\rea$-preconstructibility of the right hand side of
\eqref{eq:iso_drt} following from Lemma
\ref{lem:finiteness_good_models}, the proof is complete.


\end{proof}

Let us now study the behaviour of $\rea$-preconstructibility of the
tempered De Rham complex under inverse image in the case of a
composition of complex blow-ups and ramification maps.

\begin{lem}\label{lem:finiteness_blow-up}
  Let $Z\neq X$ be a closed analytic subset of $X$ and $\pi:X'\to X$
  be a composition of pointwise complex blow-ups above $Z$ and ramification maps
  fixing $Z$. Let $\mc M\in\dbdx [h]$ be such that $\mc M\simeq
  R\Gamma_{[X\setminus Z]}\mc M$. If $\drt [X']{(\Dfinv \pi\mc M)}$ is
  $\rea$-preconstructible, then $\drt[X]{\mc M}$ is.
\end{lem}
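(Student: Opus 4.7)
The plan is to reduce the $\rea$-preconstructibility of $\drt[X]{\mc M}$ to the assumed one for $\drt[X']{(\Dfinv\pi\mc M)}$ via the $(R\pi_!,\pi^!)$-adjunction combined with Theorem \ref{thm:indsheaves}(ii); since $\dim X'=\dim X$, the latter identifies $\pi^!(\drt[X]{\mc M})$ with $\drt[X']{(\Dfinv\pi\mc M)}$. The essential geometric observation is that $\pi$ is proper and that its restriction $\pi|_{U'}\colon U':=X'\setminus\pi^{-1}(Z)\to U:=X\setminus Z$ is a finite surjective map of smooth complex manifolds of some generic degree $d\geq 1$: pointwise blow-ups are biholomorphisms off their centres, and each ramification map $\rho_l$ is $l^k$-to-$1$ away from $Z$.

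Given $G\in\bdc[\rea-c]{\com_X}$ with compact support, the first move is to use $\mc M\simeq R\Gamma_{[X\setminus Z]}\mc M$ together with Proposition \ref{prop:tensor_commutation} and Theorem \ref{thm:indsheaves}(i), as in the opening steps of Lemma \ref{lem:reduction}(ii), to rewrite
$$R\ho[\com_X]{G}{\drt[X]{\mc M}}\simeq R\ho[\com_X]{G\ou{\com_X}\otimes\com_{X\setminus Z}}{\drt[X]{\mc M}}.$$
Next, since $\pi|_{U'}$ is a finite surjective map of smooth complex manifolds and we are working over $\com$, the normalised trace (sum over preimages divided by $d$) splits the adjunction unit $\com_U\hookrightarrow(\pi|_{U'})_*\com_{U'}$. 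Extending by zero along $U\hookrightarrow X$ and $U'\hookrightarrow X'$, and using proper base change together with $R\pi_!=R\pi_*$ and $R(\pi|_{U'})_*=(\pi|_{U'})_*$ (the latter being finite), one exhibits $\com_{X\setminus Z}$ as a direct summand of $R\pi_!\com_{X',U'}$. By the projection formula, $G\ou{\com_X}\otimes\com_{X\setminus Z}$ is then a direct summand of $R\pi_!(\pi^{-1}G\ou{\com_{X'}}\otimes\com_{X',U'})$, the argument being $\rea$-constructible with compact support on $X'$ since $\pi$ is proper.

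Applying the $(R\pi_!,\pi^!)$-adjunction and Theorem \ref{thm:indsheaves}(ii) finally displays $R\ho[\com_X]{G}{\drt[X]{\mc M}}$ as a direct summand of
$$R\ho[\com_{X'}]{\pi^{-1}G\ou{\com_{X'}}\otimes\com_{X',U'}}{\drt[X']{(\Dfinv\pi\mc M)}},$$
whose cohomology is finite-dimensional in each degree by hypothesis; finite-dimensionality passes to direct summands, and the lemma follows. The one non-formal step I expect to be the main obstacle is the splitting $\com_U\hookrightarrow(\pi|_{U'})_*\com_{U'}$ for a possibly ramified finite cover: it is exactly here that characteristic zero matters, so that the generic degree $d$ is invertible and the trace divided by $d$ provides the splitting. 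Everything else is a purely formal interplay between the six-functor formalism, the projection formula, proper base change, and the tempered pull-back compatibility already recalled in Theorem \ref{thm:indsheaves}.
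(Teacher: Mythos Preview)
Your argument is correct. Both the paper and you begin by using $\mc M\simeq R\Gamma_{[X\setminus Z]}\mc M$ to replace $G$ by $G\otimes\com_{X\setminus Z}$, and both finish via the $(R\pi_!,\pi^!)$-adjunction together with Theorem \ref{thm:indsheaves}(ii). The difference lies only in how the ramification maps are handled. The paper observes that for a ramification $\rho_l$ one may decompose the test object $G$ (taken to be $\com_U$) along a finite subanalytic covering of $U$ by open sets on each of which $\rho_l$ admits a section; on each such piece the map is a biholomorphism and the computation proceeds exactly as for a blow-up, where $R\pi_!\pi^!$ is the identity on sheaves supported on $X\setminus Z$. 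You instead treat blow-ups and ramifications uniformly by exhibiting $\com_{X\setminus Z}$ as a direct summand of $R\pi_!\com_{X'\setminus\pi^{-1}(Z)}$ via the normalised trace for the finite cover $\pi|_{U'}$, and then pass to a direct summand in $R\mathrm{Hom}$. Your route is cleaner and avoids any Mayer--Vietoris bookkeeping, at the price of invoking the trace splitting (which, as you correctly note, uses that the ground field has characteristic zero); the paper's route is slightly more elementary in that it never needs the degree to be invertible, only that the covering is locally trivial over a relatively compact subanalytic base.
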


\begin{proof}
  As concern a complex blow-up, the proof goes exactly as that of
  Lemma \ref{lem:reduction} \emph{(ii)}. As concern a ramification map
  $\rho_l$ as given in \eqref{eq:ramification}, the proof goes as in
  Lemma \ref{lem:reduction} \emph{(ii)} taking care of decomposing the
  sheaf $G$ on the elements of a finite covering on which $\rho_l$ is
  an isomorphism.

\end{proof}

\emph{Proof of Proposition \ref{prop:finiteness_connections}}

Let $\mc M\in\bdc[h]{\D_X}$ be such that $\mc M\simeq
R\Gamma_{[X\setminus S(\mc M)]}\mc M$. By Lemma \ref{lem:reduction} we
can suppose that $S(\mc M)$ is a normal crossing
hypersurface. Furthermore, by standard techniques, we can assume that $\mc
M\in\dmod[h]X$. In particular $\mc M$ can be considered as a
meromorphic connection.

By Theorem \ref{thm:asymptotic_lift}, there exists a finite sequence
of complex blow-ups and ramification maps $\pi$ such that, $\dfinv
\pi\mc M$ admits an elementary $\mc A$-decomposition. It follows, by
Lemma \ref{lem:finiteness_blow-up}, that we can suppose that $\mc M$
admits an elementary $\mc A$-decomposition.

Then, we can conclude by Lemma
\ref{lem:finiteness_good_decomposition}.



 

\qed

\subsection{The general statement}\label{subsec:general_statement}

Let us recall that, given $\mc M\in\bdc[h]{\D_X}$,

\[
\drt [X]{\mc M }:= \omt X\ou [L]{\rho_!\D_X}\otimes\rho_! \mc M\ .
\]

Moreover, $F\in\bdc[]{\com_\xsa}$ is said $\rea$-preconstructible if
for any $G\in\bdc[\rea-c]{\com_X}$ with compact support and any
$j\in\integer$,
$$ \dim_\com\mrm R^jHom_{\com_X}(G,F)<+\infty\ . $$

We can now state and prove

\begin{thm}
  Let $X$ be a complex analytic manifold, $\mc M\in\dbdx[h]$. Then
  $\drt [X]{\mc M}\in\bdc[X]{\com_\xsa}$ is $\rea$-preconstructible.
\end{thm}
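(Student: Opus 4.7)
The natural plan is to proceed by induction on $n := \dim X$, assembling the two partial results already prepared in Subsections~\ref{subsec:finiteness_support} and \ref{subsec:finiteness_connections} together with the curve case Theorem~\ref{thm:r-c_dim1}.

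For the base case $n=1$, Theorem~\ref{thm:r-c_dim1} states that $\solt(\mc M)$ is $\rea$-constructible, hence in particular $\rea$-preconstructible; using the identification $\solt(\mc M)\simeq \drt[X]{(\mathbb D_X\mc M)}$ and the fact that $\mathbb D_X$ is an equivalence on $\dbdx[h]$, one deduces that $\drt[X]{\mc N}$ is $\rea$-preconstructible for every $\mc N\in\dbdx[h]$. (The case $n=0$ is trivial.)

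For the inductive step, assume the statement holds for every complex analytic manifold of dimension strictly less than $n$, and let $\mc M\in\dbdx[h]$ with $\dim X=n$. Set $Z:=S(\mc M)$, which is a proper closed analytic subset of $X$ since $\mc M$ is holonomic. Consider the distinguished triangle
$$ R\Gamma_{[Z]}\mc M\lra\mc M\lra R\Gamma_{[X\setminus Z]}\mc M\overset{+1}\lra\ . $$
Applying the triangulated functor $\drt[X]{(\cdot)}$, and noting that for any $G\in\bdc[\rea-c]{\com_X}$ with compact support the functor $R\ho[\com_X]{G}{\cdot}$ yields a long exact sequence of $\com$-vector spaces, it suffices to prove that the tempered De Rham complex of each of the two outer terms is $\rea$-preconstructible. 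Indeed, $\rea$-preconstructibility is stable under distinguished triangles (finiteness in two out of three positions of the long exact sequence forces finiteness in the third).

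For the first term, Proposition~\ref{prop:finiteness_support} applies directly: its hypothesis on manifolds of dimension $<n$ is exactly the inductive hypothesis, so $\drt[X]{(R\Gamma_{[Z]}\mc M)}$ is $\rea$-preconstructible. For the third term, set $\mc M':=R\Gamma_{[X\setminus Z]}\mc M$; then $\mc M'\simeq R\Gamma_{[X\setminus Z]}\mc M'$ by Theorem~3.29 of \cite{kashiwara_dmod}, and since $S(\mc M')\subset Z=S(\mc M)$ one has $\mc M'\simeq R\Gamma_{[X\setminus S(\mc M')]}\mc M'$ (one may intersect with the relevant analytic sets, or equivalently observe that localising further away from a larger set has no effect). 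Hence Proposition~\ref{prop:finiteness_connections} applies to $\mc M'$ and yields the $\rea$-preconstructibility of $\drt[X]{\mc M'}$. This closes the induction and proves the theorem. The only delicate point in the plan is verifying that the localisation hypothesis \eqref{eq:localized} for Proposition~\ref{prop:finiteness_connections} is indeed satisfied by $R\Gamma_{[X\setminus Z]}\mc M$; this is immediate from the idempotency $R\Gamma_{[X\setminus Z]}\circ R\Gamma_{[X\setminus Z]}\simeq R\Gamma_{[X\setminus Z]}$, which itself follows from Theorem~3.29(i) of \cite{kashiwara_dmod}.
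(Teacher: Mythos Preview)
Your proof is correct and follows the same strategy as the paper's own argument: induction on $\dim X$, with Theorem~\ref{thm:r-c_dim1} for the base case, then the distinguished triangle $R\Gamma_{[S(\mc M)]}\mc M\to\mc M\to R\Gamma_{[X\setminus S(\mc M)]}\mc M\overset{+1}\to$ together with Propositions~\ref{prop:finiteness_support} and \ref{prop:finiteness_connections} for the inductive step. Your write-up is in fact slightly more careful than the paper's, spelling out the passage from $\solt$ to $\drt[X]{}$ via $\mathbb D_X$ in dimension~$1$ and checking that $R\Gamma_{[X\setminus Z]}\mc M$ really satisfies hypothesis~\eqref{eq:localized}; the paper leaves both of these implicit.
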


\begin{proof}
Let us use induction on the dimension of $X$.

If $X$ is a curve, then the result follows at once from Theorem
\ref{thm:r-c_dim1}.

Now suppose that $\dim\,X>1$ and that the inductive hypothesis
holds. That is, for any complex analytic manifold $Y$ with
$1\leq\dim\,Y<\dim\,X$ and for any $\mc N\in\bdc[h]{\D_Y}$,
$\drt[Y]{\mc N}$ is $\rea$-preconstructible. Let us prove that for any $\mc
M\in\bdc[h]{\D_X}$, $\drt[X]{\mc M}$ is $\rea$-preconstructible.

By using the distinguished triangle 

$$R\Gamma_{[S(\mc M)]}\mc M\lra\mc M \lra R\Gamma_{[X\setminus S(\mc
  M)]}\mc M\overset{+1}\lra  \ ,$$

it is enough to prove the statement for $R\Gamma_{[S(\mc M)]}\mc M$
and for $R\Gamma_{[X\setminus S(\mc M)]}\mc M$. The conclusion 
follows by Propositions \ref{prop:finiteness_support} and
\ref{prop:finiteness_connections}


\end{proof}


\addcontentsline{toc}{section}{\textbf{References}}
\bibliography{biblio}{}
\bibliographystyle{abbrv}

\vspace{10mm}

{\small{\sc Giovanni Morando

Dipartimento di Matematica Pura ed Applicata,

Universit{\`a} degli Studi di Padova,

Via Trieste 63, 35121 Padova, Italy.}

E-mail address: $\texttt{gmorando@math.unipd.it}$}

\end{document}